\documentclass[12pt,reqno,oneside]{amsart}
\usepackage{comment}
\usepackage[latin1]{inputenc}
\usepackage[T1]{fontenc}
\usepackage{amsmath,amsfonts,amscd,amssymb,amsthm}
\usepackage{mathtools}
\usepackage{dsfont}
\usepackage{color}
\usepackage[mathscr]{euscript}
\usepackage{caption}
\usepackage{longtable}

\makeatletter
\long\def\@makecaption#1#2{
	\vskip\abovecaptionskip
	\small 
	\textbf{#1.} #2\par
	\vskip\belowcaptionskip
}
\makeatother

\usepackage{graphicx}

\usepackage{placeins}
\usepackage{extsizes}
\usepackage{float}
\usepackage{multirow}
\usepackage{pdflscape}
\usepackage{stackengine}
\usepackage{caption}
\usepackage{pgfplots}
\usepackage{subcaption}
\usepackage{mathabx}
\usepackage{tikz}
\usepackage{hyperref}
\usepackage[dvipsnames]{xcolor}
\usetikzlibrary{decorations.pathreplacing}
\usetikzlibrary{backgrounds} 
\usetikzlibrary{calc}
\pgfdeclarelayer{bg}    
\pgfsetlayers{bg,main}
\usepackage{rotate}
\usepackage[noend]{algorithmic}
\usepackage{algorithm}
\usepackage{setspace}
\graphicspath{{fig/}}
\definecolor{color1bg}{HTML}{f73d28}
\definecolor{color2bg}{HTML}{FA8072}
\definecolor{bblue}{HTML}{00BFFF}
\definecolor{bblue2}{HTML}{00ffff}

\usetikzlibrary{arrows,calc}
\tikzset{
	>=stealth',
	help lines/.style={dashed, thick},
	axis/.style={<->},
	important line/.style={thick},
	connection/.style={thick, dotted},
}
\usetikzlibrary{
	arrows.meta,
	positioning,
	shapes.geometric,
	calc,
	backgrounds,
	fit
}

\usetikzlibrary{shadows}
\usetikzlibrary{backgrounds}
\tikzset{
	diagonal fill/.style 2 args={fill=#2, path picture={
			\fill[#1, sharp corners] (path picture bounding box.south west) -|
			(path picture bounding box.north east) -- cycle;}},
	reversed diagonal fill/.style 2 args={fill=#2, path picture={
			\fill[#1, sharp corners] (path picture bounding box.north west) |- 
			(path picture bounding box.south east) -- cycle;}}
}
\usetikzlibrary{arrows.meta}

\makeatletter
\@addtoreset{equation}{section}
\makeatother
\newcounter{as}[section]

\newcommand{\eps}{\epsilon}

\title[Stochastic variational PINNs]{Random test functions, $H^{-1}$ norm equivalence, and stochastic variational physics-informed neural networks}

\author{Diego Marcondes \\ $\;$ \\\today}
\address{Mathematical Sciences Institute and France-Australia Mathematical Sciences and Interactions ANU-CNRS International Research Lab, The Australian National University}
\email{\href{diego.marcondes@anu.edu.au}{diego.marcondes@anu.edu.au} }
\allowdisplaybreaks
\newtheorem{theorem}{Theorem}[section]
\newtheorem{remark}[theorem]{Remark}

\newtheorem{definition}[theorem]{Definition}

\newtheorem{corollary}[theorem]{Corollary}
\newtheorem{lemma}[theorem]{Lemma}
\newtheorem{proposition}[theorem]{Proposition}

\newcommand{\mc}[1]{{\mathcal #1}}

\newcommand{\bs}[1]{{\boldsymbol #1}}

\newcommand{\mfB}{\mathfrak{B}}

\newcommand{\mfF}{\mathfrak{F}}

\newcommand{\mcL}{\mathcal{L}}

\newcommand{\mcW}{\mathcal{W}}

\newcommand{\mbR}{\mathbb{R}}
\newcommand{\mbE}{\mathbb{E}}

\newcommand{\mbN}{\mathbb{N}}

\newcommand{\lb}{L-BFGS}
\newcommand{\lbs}{L-BFGS }

\newcommand{\bli}{\begin{linenomath}}
\newcommand{\eli}{\end{linenomath}}

\usepackage{lineno}

\definecolor{bblue}{rgb}{.2,0.2,.8}

\begin{document}
	\maketitle
	
	\begin{abstract}
		The dual norm characterisation of weak solutions of second-order linear elliptic partial differential equations is mathematically natural but computationally intractable: evaluating the $H^{-1}$ norm of the residual requires a supremum over an infinite-dimensional test space. We prove that the $H^{-1}$ norm of any functional is equivalent to its expected squared evaluation against a random test function whose probability distribution depends only on the domain. Crucially, realisations of this random test function have negative Sobolev regularity for $d \geq 2$, yet this roughness is not an obstacle: averaging over the distribution exactly recovers the correct weak topology, independently of the differential operator, and no supremum evaluation is necessary. This equivalence introduces the notion of stochastically weak solutions, which coincide with classical weak solutions, and motivates stochastic variational physics-informed neural networks (SV-PINNs): neural networks trained by minimising an empirical approximation of the stochastic norm of the PDE residual. Although instantiated here with neural networks, the underlying principle is independent of the trial space and suggests a broader paradigm for numerical methods based on stochastic rather than deterministic test spaces. The framework extends naturally to higher-order elliptic, parabolic and hyperbolic equations and to abstract operator equations on Hilbert spaces. As a proof of concept, we present numerical experiments on eight challenging second-order linear elliptic problems spanning high-frequency and multi-scale solutions, indefinite operators, variable coefficients, and non-standard domains, in which SV-PINNs consistently and significantly outperform standard PINNs, recovering solutions to within one percent relative error in hundreds of L-BFGS steps.
		
	\end{abstract}
	
	\tableofcontents
	
	\section{Introduction}
	
	\subsection{Background}
	
	Physics-informed neural networks (PINNs), proposed by \cite{raissi2019physics} to solve forward and inverse problems associated with partial differential equations (PDEs), have been widely adopted in scientific and engineering applications \cite{cuomo2022scientific,karniadakis2021physics,luo2025physics,toscano2025pinns}. However, despite their success, training stability and accuracy remain challenging in several regimes \cite{wang2021understanding,wang2021eigenvector,wang2022and}. In response to these limitations, a growing body of work has explored weak formulations of PDEs in the context of PINNs, motivated by their superior stability properties. These approaches, including variational PINNs \cite{kharazmi2019variational}, replace strong residual enforcement with weak residual evaluation against suitable test functions.
	
	As a concrete example of a weak solution, let $\Omega \subset \mbR^d$ be a bounded convex domain and consider the Poisson equation
	\begin{equation}
		\label{BVP1}
		\begin{aligned}
			-\Delta u = f & \text{ in } \Omega\\
			u = 0 & \text{ on } \partial\Omega
		\end{aligned}
	\end{equation}
	for $f \in L^{2}(\Omega)$. The weak solution of \eqref{BVP1} is the function $u$ in the Sobolev space $H^{1}_{0}(\Omega)$, which in this case also belongs to $H^{2}(\Omega)$ since $\Omega$ is convex, that satisfies
	\begin{align*}
		R_{u}(\varphi) \coloneqq \int_{\Omega} (-\Delta u(x) - f(x)) \, \varphi(x) \, dx = 0 \, \,  \text{ for all } \varphi \in H_{0}^{1}(\Omega)
	\end{align*} 
	or, equivalently,
	\begin{align*}
		\lVert R_{u} \rVert_{-1} \coloneqq \sup_{\substack{\varphi \in H_{0}^{1}(\Omega)\\ \varphi \neq 0}} \frac{|R_{u}(\varphi)|}{\lVert \varphi \rVert_{H_{0}^{1}}} = 0.
	\end{align*}
	We refer to Section \ref{SecDef} for the definition of the Sobolev spaces above and their norms.
	
	Based on the concept of weak solutions, the variational PINNs with fixed finite-dimensional test space
	\begin{equation*}
		V_{n} \coloneqq \text{span}\{v_{1},\dots,v_{n}\} \subset H_{0}^{1}(\Omega), \, \, \, \, \,  n < \infty
	\end{equation*}
	are given by minimising, over the parameters $\theta$ of a neural network $u_{\theta}: \Omega \to \mbR$, the sum of the squared weak residual evaluated on a basis of $V_{n}$ plus a soft penalty to enforce the boundary condition of \eqref{BVP1} when it is not imposed strongly by the architecture:
	\begin{align}
		\label{loss_VPINNs}
		\mcL_{V_{n}}(\theta) \coloneqq \sum_{j = 1}^{n} |R_{u_{\theta}}(v_{j})|^{2} + \lambda \, \int_{\partial\Omega} |u_{\theta}(x)|^2 \, dx
	\end{align}
	for $\lambda > 0$.
	
	In order to train the neural network, the integrals in \eqref{loss_VPINNs} are rewritten by integration by parts as
	\begin{align*}
		R_{u_{\theta}}(v_{j}) = \int_{\Omega} \nabla u_{\theta}(x) \cdot \nabla v_{j}(x) - f(x) \, v_{j}(x) \, dx
	\end{align*}
	and then approximated by a quadrature rule or Monte Carlo integration. Typical choices of $V_{n}$ include Legendre or Chebyshev polynomials, hat functions, trigonometric functions or splines. This approach can be applied to other PDEs beyond the Poisson equation, and we refer to \cite{kharazmi2019variational} for more details.
	
	In weak formulations, the choice of test functions determines how the residual probes the differential operator and, consequently, what information is used to guide the approximation. In classical numerical methods, this choice is often adapted to the operator, so that the resulting weak residual provides reliable control of the solution error. In variational PINNs, however, the test functions are typically selected independently of the operator, for example by fixing a generic polynomial or spline basis. While this leads to a simple and flexible formulation, it also means that the quantity being minimised depends on the particular choice of test functions. In general, it does not coincide with the natural weak residual norm associated with the PDE. Consequently, two different choices of test spaces may lead to qualitatively different training objectives for the same equation. Methods that attempt to reintroduce operator dependence by adapting or optimising test functions introduce additional complexity. 
	
	In order to address this issue, we consider an alternative way of evaluating weak residuals, based on a stochastic characterisation of weak solutions, which remains operator independent by considering random test functions, yet recovers the correct weak topology induced by the $\lVert \cdot \rVert_{-1}$ norm, apart from sampling approximations.
	
	\subsection{Stochastically weak solutions and SV-PINNs}
	
	In this paper, we introduce the concept of \textit{stochastically weak solutions}, which fundamentally alters how weak formulations are operationalised and leads to the stochastic variational PINNs. In contrast to weak solutions that must satisfy $R_{u}(\varphi) = 0$ for all test functions $\varphi$ in $H_{0}^{1}(\Omega)$, a stochastically weak solution should satisfy $|R_{u}(\varphi)|^{2} = 0$ in expectation over a random test function $\Phi$, that is, 
	\begin{align*}
		\lVert R_{u} \rVert_{\Phi}^{2} \coloneqq \mbE\left[|R_{u}(\Phi)|^2\right] = 0
	\end{align*}
	in which the expectation is over $\Phi$. Taking $\Phi: \Xi \times \Omega \to \mbR$ as a random measurable map defined in a probability space $(\Xi,\mfF,\mu)$, the expectation is given by
	\begin{align*}
		\mbE\left[|R_{u}(\Phi)|^2\right] \coloneqq \int_{\Xi} |R_{u}(\Phi(\xi,\cdot))|^{2} \, d\mu(\xi).
	\end{align*}
	We call $\lVert R_{u} \rVert_{\Phi}$ the $\Phi$-stochastically weak norm of $R_{u}$.
	
	This definition naturally leads to a PINN-based approach to approximate stochastically weak solutions of \eqref{BVP1} by solving the optimisation problem
	\begin{align}
		\label{p1}
		\min_{\theta} \, \lVert R_{u_{\theta}} \, \rVert_{\Phi}^{2} + \lambda \, \int_{\partial\Omega} |u_{\theta}(x)|^2 \, dx
	\end{align}
	for $\lambda > 0$. In order to obtain a computable loss function, one can consider an empirical version of this loss
	\begin{align*}
		\mcL_{\Phi}(\theta) = \frac{1}{N} \sum_{j = 1}^{N} |R_{u_{\theta}}(\varphi_{j})|^{2} + \lambda \, \int_{\partial\Omega} |u_{\theta}(x)|^2 \, dx
	\end{align*}
	for random functions $\varphi_{1},\dots,\varphi_{N}$ sampled independently from the same distribution as $\Phi$, which can be minimised in practice using quadrature or Monte Carlo integration to approximate the integrals appearing in the loss. We call this the \textit{stochastic variational physics-informed neural networks} (SV-PINNs) framework.
	
	The well-posedness and viability of this method are not guaranteed a priori, unless two key mathematical questions are answered. The first one is the existence and uniqueness of stochastically weak solutions, ensuring that \eqref{p1} is well-posed when the neural network has sufficient approximation capacity \cite{devore2021neural}. The second question is how the stochastically weak solution is related to the weak solution, which is ultimately the function one seeks to approximate.
	
	We answer these questions by proving that, for a carefully constructed random test function $\Phi$, the $\Phi$-stochastically weak norm of the residual is \textit{equivalent} to the dual $H^{-1}(\Omega)$ norm. Formally, we prove (cf. Proposition \ref{prop_equiv}) that
	\begin{align}
		\label{equiv0}
		c \, \lVert R_{u} \rVert_{-1} \leq \lVert R_{u} \rVert_{\Phi} \leq C \, \lVert R_{u} \rVert_{-1}
	\end{align}
	for all $u \in H_{0}^{1}(\Omega)$ and constants $c,C > 0$ that depend on the domain $\Omega$ and the scaling of $\Phi$. In particular, neither $\Phi$ nor the constants depend on the PDE operator being considered in the weak residual, but only on the domain $\Omega$. The equivalence above holds for any functional $R \in H^{-1}(\Omega)$, and not only for the residual functional, so the norms are in fact equivalent.
	
	It is immediate from \eqref{equiv0} that the stochastically weak solution is unique and coincides with the weak solution. As a consequence, the infeasible minimax optimisation problem defined by the dual norm
	\begin{align*}
		\min_{\theta} \, \lVert R_{u_{\theta}} \, \rVert_{-1}^{2} + \lambda \, \int_{\partial\Omega} |u_{\theta}(x)|^2 \, dx,
	\end{align*}
	can be replaced by the stochastic optimisation problem \eqref{p1} that admits a practical empirical approximation, converting an abstract dual variational characterisation into a feasible stochastic optimisation problem.
	
	While in this paper we focus on second-order linear elliptic operators for the sake of simplicity and proof-of-concept, the SV-PINNs provide a mathematically grounded framework for solving a broad class of PDEs. In this framework, a stochastically weak norm of the residuals, equivalent to the respective weak norm, is minimised over the parameters of a neural network. Our experiments in challenging problems, involving high-frequency solutions, indefinite operators and different domains, illustrate the capabilities of this method, which consistently outperforms standard PINNs in challenging regimes, with SV-PINNs recovering solutions to within one percent relative error in hundreds of \lbs steps.
	
	\subsection{Related work}
	
	PINNs approaches that rely on the strong form of PDEs by minimising the $L^2(\Omega)$ norm of the residuals have known failure modes \cite{wang2021understanding,wang2021eigenvector,wang2022and} related to, for example, spectral bias and gradient pathologies. These have been addressed in the strong formulation setting in extensions of the basic algorithm proposed by \cite{raissi2019physics} and we refer to the reviews \cite{cuomo2022scientific,karniadakis2021physics,luo2025physics,toscano2025pinns} for more details. 
	
	Another line of research to mitigate PINNs failures considers neural-network-based methods for solving PDEs relying on weak formulations. These in fact predate PINNs \cite{raissi2019physics},  a prominent example being the Deep Ritz Method (DRM) \cite{yu2018deep}. This framework proposes training a neural network by minimising the energy functional associated with the variational formulation of elliptic PDEs plus a soft penalty to enforce boundary conditions. The DRM usually requires computing fewer derivatives than PINNs based on the strong form of PDEs, so it can be more efficient in certain cases. We refer to \cite{hu2025iterative,yu2025natural} for recent DRM-based frameworks.
	
	The closest PINN-based method to that proposed in this paper is the variational PINNs (V-PINNs) proposed by \cite{kharazmi2019variational} and refined by \cite{kharazmi2021hp} with domain decomposition. The V-PINNs consist of minimising a loss like \eqref{loss_VPINNs} for a fixed finite-dimensional subspace of the true test space, which is $H_{0}^1(\Omega)$ in the example above. Many choices of test spaces $V_{n}$ have been studied in the literature, for instance an adversarial approach was proposed by \cite{zang2020weak}, in which the test functions are trainable neural networks. The error of V-PINNs was studied in \cite{berrone2022variational}. Unlike V-PINNs, which approximate the test space by a deterministic finite-dimensional subspace, or a trained adversary, SV-PINNs use random test functions drawn from a fixed distribution that induces a norm equivalent to the $H^{-1}(\Omega)$ dual norm.
	
	We note that V-PINNs are a least-squares Petrov-Galerkin method \cite[Section~II.4]{braess2001finite} where the trial space is generated by the neural network architecture and the goal is to find parameters that satisfy the boundary condition and
	\begin{align*}
		R_{u_{\theta}}(v) = 0 \text{ for all } v \in V_{n}.
	\end{align*}
	We refer to \cite{braess2001finite} for more details and to \cite{ainsworth2021galerkin,gao2022physics,xu2026weak} for other hybrid neural-network-based (Petrov-)Galerkin methods that also consider finite-dimensional test spaces in some way. These methods, and Petrov-Galerkin methods in general, are conceptually different from the proposed SV-PINNs which do not consider a test space, but only a trial space, given by a neural network, and the optimisation problem is given by minimising the $\Phi$-stochastically weak norm over it.
	
	Nevertheless, in practice, the SV-PINNs can be interpreted as a Petrov-Galerkin method in which the test space is \textit{random} and given by
	\begin{align*}
		\hat{V}_{N} = \text{span}\{\varphi_{1},\dots,\varphi_{N}\}
	\end{align*}
	in which the test functions $\varphi_{j}$ are sampled independently according to the distribution of $\Phi$. Therefore, pragmatically, the SV-PINNs are a Petrov-Galerkin method based on least squares with a neural network trial space and $\hat{V}_{N}$ as test space. 
	
	However, there are at least two important caveats. First, the test functions are not tailored to the specific partial differential operator, but depend only on the domain $\Omega$, as $\Phi$ does not depend on the operator. Therefore, in principle, the method does not change when the operator changes as long as the weak formulation is based on the $H^{-1}(\Omega)$ norm. This differs from, for example, Discontinuous Petrov-Galerkin \cite{demkowicz2017discontinuous}, where optimal test functions for a given operator are chosen. Second, instead of depending directly on the \textit{size} of the test set, the performance of SV-PINNs relies on how well the $\Phi$-stochastically weak norm can be approximated by the empirical version uniformly in $\theta$. This implies that the error analysis of SV-PINNs is a statistical problem that depends on concentration inequalities, rather than a classical numerical analysis problem. As will be discussed in Section \ref{Sec_error}, these concentration inequalities depend on the \textit{geometry} of the trial space.
	
	We note that there are many numerical methods for solving PDEs that involve \textit{randomness}, in particular probabilistic numerical methods \cite{cockayne2019bayesian} and physics-informed Gaussian processes \cite{cross2024spectrum}, and numerical methods to approximate \textit{random solutions} of stochastic PDEs \cite{lord2014introduction}. There are also the so-called stochastic Galerkin methods \cite{babuvska2007stochastic,jakeman2008stochastic}, in which the randomness is in the problem variables, such as inputs, boundary conditions and equation parameters. The randomness of these kinds of methods differs from that of this paper, since the optimisation problem is not based on statistical models, such as a Bayesian model, and both the problem and its solution are \textit{deterministic}, but the solution is rather determined via a stochastically weak norm. 
	
	Randomness also appears in numerical linear algebra approaches to solving discretised PDEs, where random sketching matrices or randomised Kaczmarz iterations are used to accelerate the solution of large linear systems \cite{mahoney2011randomized, strohmer2009randomized, woodruff2014sketching}. In these methods, randomness is a computational tool applied after discretisation and is not designed to recover any particular norm of the continuous residual. Moreover, random feature methods \cite{chen2021solving,rahimi2007random} use randomly sampled basis functions to span the trial space for PDE approximation, with the distribution chosen to approximate a kernel rather than to induce a dual norm equivalence. In both cases, the role of randomness differs from that in SV-PINNs, where the distribution of the random test functions is chosen specifically so that the resulting expected squared residual is equivalent to the dual norm of the residual.
	
	Norm equivalences like \eqref{equiv0} have been studied in the context of Malliavin calculus \cite{nualart2006malliavin}, which is particularly concerned with computing derivatives of functionals applied to stochastic processes. In that theory, the norm of a Hilbert space is showed to be equivalent to the norm induced by an associated \textit{isonormal Gaussian} processes on the space. Nevertheless, the main interest of that theory is on spaces that are subsets of $L^2(\Omega)$, while our interest is on a stochastic norm equivalent to that of $H^{-1}(\Omega)$. Furthermore, the process $\Phi$ we consider is not a Gaussian process unless $d = 1$.
	
	\subsection{Main contributions}
	
	The main contributions of this paper are:
	\begin{itemize}
		\item We characterise the $\Phi$-stochastically weak solutions of second-order linear elliptic PDEs and prove that they coincide with the weak solutions when $\Phi$ is defined as
		\begin{align}
			\label{Phi0}
			\Phi(\xi,\cdot) = \tau \sum_{k = 1}^{\infty} (1 + \lambda_{k})^{-1/2} \, w_{k}(\xi) \, \phi_{k}(\cdot), \, \, \tau > 0
		\end{align}
		in which $w_{k}$ are independent random variables with mean zero and variance one (e.g., standard Gaussian), and $\lambda_{k}$ and $\phi_{k}$ are the eigenvalues and eigenfunctions of the Dirichlet Laplacian on $\Omega$. In particular, we prove the norm equivalence \eqref{equiv0}, which generalises, to non-Gaussian weights and to the setting of dual Sobolev spaces of PDE residuals, the isometry property of isonormal Gaussian processes studied in Malliavin calculus \cite{nualart2006malliavin}. 
		\item We propose the SV-PINNs as neural networks that are trained by minimising the empirical $\Phi$-stochastically weak norm of the residuals with a soft penalty for boundary conditions when they are not obtained by hard constraints in the architecture. This formulation seeks to mitigate failure modes of PINNs and eliminate the issue of selecting a suitable test space for each PDE operator.
		\item As a proof-of-concept, we perform experiments with challenging problems on hypercube, circular, and L-shaped domains, comparing SV-PINNs with PINNs to illustrate the capabilities of the proposed method. 
		\item Although we focus on second-order linear elliptic PDEs for simplicity, we argue that the method is actually more general and discuss how it can be applied to higher-order elliptic PDEs, and parabolic and hyperbolic PDEs. 
	\end{itemize}
	
	While weak formulations and Galerkin methods are classical, they rely on deterministic test spaces and supremum-based dual norms. To the best of our knowledge, the idea of inducing an equivalent weak norm through expectations over random test functions has neither been made explicit in the numerical analysis literature nor exploited algorithmically.
	
	\subsection{Comments}
	
	A central and perhaps counterintuitive aspect of the proposed framework is the nature of the random test functions that define the $\Phi$-stochastically weak norm. In classical weak formulations and Petrov-Galerkin methods, test functions are typically chosen to be smooth, structured, and closely related to the differential operator or the geometry of the domain. In contrast, the random test functions induced by $\Phi$ in \eqref{Phi0}, although they depend strongly on the geometry of the domain, have inherently low regularity: for spatial dimensions $d \geq 2$, realisations of $\Phi$ are not elements of $L^{2}(\Omega)$, but rather distributions belonging almost surely to negative-order Sobolev spaces (cf. Lemma \ref{lemma_traj}). From the perspective of numerical analysis, such test functions would normally be regarded as unsuitable for defining stable weak formulations.
	
	The key insight of this work is that, despite their low regularity, these random test functions induce through averaging a norm that is equivalent to the dual $H^{-1}(\Omega)$ norm of the residual functional. The equivalence \eqref{equiv0} shows that no smoothness, adaptivity, or operator-specific construction of test functions is required: a fixed probability distribution over rough, operator-independent random test functions that depend only on the domain geometry suffices to recover the correct weak topology. This stands in contrast to standard deterministic approaches, where stability and accuracy are tightly linked to the careful design of the test space.
	
	From this perspective, SV-PINNs should not be viewed merely as another weak-form PINN variant, but rather as a conceptually different way of operationalising weak formulations. The stochastically weak norm replaces a supremum over an infinite-dimensional test space by an expectation over a fixed random process, thereby transforming an intractable minimax dual-norm minimisation problem into a stochastic one. While in this paper this idea is instantiated using neural networks as trial spaces, the underlying principle is independent of the approximation architecture and suggests a broader paradigm for constructing numerical methods based on stochastic rather than deterministic test spaces.	
	
	On the one hand, a possible limitation is that sampling from \eqref{Phi0} usually requires eigenvalues and eigenfunctions of the Laplace operator, which do not always admit closed-form expressions. On the other hand, this spectral decomposition needs to be computed only once for a given domain. Moreover, we can leverage the same eigenfunctions to impose hard boundary constraints through domain-aware Fourier features \cite{calero2026enhancing}. In this way, SV-PINNs yield a method for solving second-order linear elliptic PDEs that relies solely on the spectral decomposition of the Laplacian and remains independent of the underlying PDE operator, as will be illustrated in the experiments.
	
	\subsection{Paper structure}
	
	In Section \ref{SecDef}, we define the Sobolev spaces and abstract framework considered in this paper, and in Section \ref{SecWeak}, we define the $\Phi$-stochastically weak solutions and prove the equivalence of norms in \eqref{equiv0} for the particular $\Phi$ defined in \eqref{Phi0}. In Section \ref{SecSV}, we define the SV-PINNs and propose algorithms to sample random test functions. In Section \ref{Sec_error}, we bound the approximation of the $\Phi$-stochastically weak norm by its empirical version and discuss what is necessary to control the error of SV-PINNs. In Section \ref{Sec_exp}, we present six challenging experiments in hypercube domains, while in Sections \ref{Sec_exp_circle} and \ref{Sec_exp_L} we present experiments in circular and L-shaped domains, respectively. Finally, in Section \ref{SecGen} we discuss generalisations and extensions of the SV-PINNs, and in Section \ref{SecDis} we present a discussion. More details about the implementation of the experiments and auxiliary results can be found in the appendix.
	
	\section{Fractional Sobolev spaces in domains}
	\label{SecDef}
	
	Let $\Omega \subset \mbR^{d}, d \geq 1,$ be a bounded, open and connected subset with Lipschitz boundary. We denote the inner product in $L^{2}(\Omega)$ by $\langle \cdot,\cdot \rangle$ and the norm by $\lVert \cdot \rVert_{L^2}$. The norm in $L^{2}(\partial\Omega)$ is denoted by $\lVert \cdot \rVert_{L^2(\partial\Omega)}$. Throughout this paper, $c,C > 0$ denote generic constants whose value may change from line to line, or even within the same line, and their dependence on fixed parameters may be stated explicitly when needed.
	
	Consider the spectral decomposition of the Dirichlet Laplacian
	\begin{equation}
		\label{DLoper}
		\begin{aligned}
			-\Delta \phi_{k} &= \lambda_{k} \phi_{k} & & \text{ in } \Omega \\
			\phi_{k} &= 0 & & \text{ on } \partial \Omega
		\end{aligned}
	\end{equation}
	in which $\{\phi_{k}\}$ can be chosen to form an orthonormal basis of $L^{2}(\Omega)$ and $\{\lambda_{k}\}$ is a non-decreasing sequence satisfying Weyl asymptotic bounds \cite{ivrii2016100}
	\begin{equation}
		\label{wlaw}
		c \ \! k^{2/d} \leq \lambda_{k} \leq C \ \! k^{2/d}
	\end{equation}
	for constants $c,C > 0$ that depend on the domain $\Omega$.
	
	For each $s \in \mbR_{+}$ we define the space
	\begin{equation}
		\label{Sob_space}
		\dot{H}^{s}(\Omega) = \left\{u \in L^{2}(\Omega): \sum_{k = 1}^{\infty} \lambda_{k}^{s} \ |\langle u,\phi_{k} \rangle|^{2} < \infty\right\}
	\end{equation}
	and its dual
	\begin{equation*}
		\dot{H}^{-s}(\Omega) = \left\{R: \dot{H}^{s}(\Omega) \to \mbR \text{ bounded linear}: \sum_{k = 1}^{\infty} \lambda_{k}^{-s} |R(\phi_{k})|^{2} < \infty\right\}
	\end{equation*}
	equipped with the norms
	\begin{align}
		\label{norms}
		\lVert u \rVert_{\dot{H}^{s}}^{2} \coloneqq \sum_{k = 1}^{\infty} \lambda_{k}^{s} \ |\langle u,\phi_{k} \rangle|^{2} & & \text{ and } & & \lVert R \rVert_{-s}^{2} \coloneqq \sum_{k = 1}^{\infty} \lambda_{k}^{-s} |R(\phi_{k})|^{2},
	\end{align}
	respectively. We note that $\lVert R \rVert_{-s}$ can be equivalently defined as
	\begin{align*}
		\lVert R \rVert_{-s} = \sup_{\substack{\varphi \in \dot{H}^{s}(\Omega) \\ \varphi \neq 0}} \frac{|R(\varphi)|}{\lVert \varphi \rVert_{\dot{H}^{s}}}.
	\end{align*}
	For $0 \leq s \leq 1$, the space $\dot{H}^{s}(\Omega)$ is equivalent to the Sobolev space $H_{0}^{s}(\Omega)$ defined as the closure of $C_{c}^{\infty}(\Omega)$, the space of infinitely differentiable functions $\phi: \Omega \to \mbR$ with compact support, with respect to equivalent norms on fractional Sobolev spaces (e.g., based on the Gagliardo semi-norm (cf. \eqref{Gnorm}) or the Bessel potentials; see \cite{di2012hitchhikers} for more details). We refer to \cite{kim2020fractional}, \cite[Section~2.2]{nochetto2015pde}, \cite[Chapter~3]{thomee2007galerkin} and the references therein for more details about this equivalent definition of fractional Sobolev spaces\footnote{We note that the space $H_{0}^{s}(\Omega) = \dot{H}^{s}(\Omega)$ defined in \eqref{Sob_space} for $0 \leq s \leq 1$ is equivalent to $H^{s}(\Omega)$  for $0 < s < 1/2$, equivalent to the Lions-Magenes space $H_{00}^{1/2}$ for $s = 1/2$ and equivalent to $L^{2}(\Omega)$ for $s = 0$, although we always use the same notation $H_{0}^{s}(\Omega)$ for simplification. We refer to \cite{nochetto2015pde} for more details.} $H_{0}^{s}(\Omega)$. From now on we denote $H_{0}^{s}(\Omega)$ instead of $\dot{H}^s(\Omega)$, and its dual $H^{-s}(\Omega)$ instead of $\dot{H}^{-s}(\Omega)$, when $s \leq 1$.
	
	A Lebesgue measurable function $u: \Omega \to \mbR$ is said to have a weak derivative $D^{\alpha}u$ with multi-index $\alpha = \alpha_{1}\dots\alpha_{d}, \alpha_{j} \in \mbN,$ if there exists a measurable function $v$ such that
	\begin{equation*}
		\int_{\Omega} u \ \! \partial^{\alpha} \phi \ dx = (-1)^{|\alpha|} \int_{\Omega} v \ \! \phi \ dx
	\end{equation*}
	for all $\phi \in C_{c}^{\infty}(\Omega)$ in which $|\alpha| = \sum_{i} \alpha_{i}$. We call $D^{\alpha}u \coloneqq v$ the weak derivative of $u$ corresponding to $\alpha$. For $s \in \mbR_{+}$, we define
	the Sobolev space
	\begin{align*}
		H^{s}(\Omega) = \left\{u \in L^{2}(\Omega): \lVert u \rVert_{H^{s}} < \infty\right\}
	\end{align*}
	with the norm\footnote{With the convention that the second term is omitted when $s$ is an integer.}
	\begin{align}
		\label{Gnorm}
		\|u\|_{H^{s}}^{2} =	\sum_{|\alpha| \leq \lfloor s \rfloor} \|D^{\alpha} u\|_{L^{2}(\Omega)}^{2} + \sum_{|\alpha| = \lfloor s \rfloor} \int_{\Omega} \! \int_{\Omega} \frac{|D^{\alpha}u(x) - D^{\alpha}u(y)|^{2}}{|x-y|^{\,d + 2(s - \lfloor s \rfloor)}} \, dx \, dy
	\end{align}
	based on the Gagliardo semi-norm \cite{di2012hitchhikers}. For $s^{\prime} > 0$, there exists a continuous embedding $H^{s + s^{\prime}}(\Omega) \hookrightarrow H^{s}(\Omega)$. Since $\Omega$ is bounded and has a Lipschitz boundary, the space $H_{0}^{1}(\Omega)$ can be defined equivalently as the elements of $H^1(\Omega)$ which are equal to zero on $\partial\Omega$ in the trace sense:
	\begin{align*}
		H_{0}^{1}(\Omega) = \{u \in H^1(\Omega): u|_{\partial\Omega} = 0\}.
	\end{align*}
		
	\section{Stochastically weak solutions of elliptic equations}
	\label{SecWeak}
	
	Let $L$ be a second-order linear elliptic operator and consider the homogeneous Dirichlet problem
	\begin{equation}
		\label{basic_problem}
		\begin{aligned}
			Lu &= f \hspace{1cm} \text{ in } \Omega \\
			u &= 0 \hspace{1cm} \text{ on } \partial \Omega
		\end{aligned}
	\end{equation}
	with $f \in H^{-1}(\Omega)$. For $u \in H_{0}^{1}(\Omega)$, assuming $Lu \in H^{-1}(\Omega)$, we can define
	\begin{equation*}
		R_{u}(\varphi) \coloneqq (Lu - f)(\varphi)
	\end{equation*}
	as the weak residual of the problem \eqref{basic_problem} for $\varphi \in H_{0}^{1}(\Omega)$. The weak solutions $u \in H_{0}^{1}(\Omega)$ of \eqref{basic_problem} are such that
	\begin{align*}
		R_{u}(\varphi) = 0, \ \forall \varphi \in H_{0}^{1}(\Omega) \text{ or, equivalently, } \lVert R_{u} \rVert_{-1} = 0.
	\end{align*}
	We assume that the weak solution of \eqref{basic_problem} is unique.
	
	If $f \in L^{2}(\Omega)$ and $\Omega$ and/or its boundary is more \textit{regular} (e.g. $\Omega$ convex), then the weak solution coincides with the strong solution $u \in H^{2}(\Omega) \cap H_{0}^{1}(\Omega)$ that satisfies $\lVert Lu - f \rVert_{L^2} = 0$ . We refer to \cite[Chapter~7]{hackbusch2017elliptic} for sufficient conditions for the weak solution to be unique and to coincide with the strong solution.

	\subsection{Stochastically weak solution}
	\label{SecSW}
	
	An apparently \textit{weaker} notion of solution would be to, instead of requiring $R_{u}(\varphi) = 0$ for all test functions $\varphi \in H_{0}^{1}(\Omega)$, requiring the expectation of  $|R_{u}(\varphi)|^{2}$ over \textit{random test functions} to be equal to zero. By properly choosing the probability distribution of the test functions, this notion of solution is actually equivalent to that of weak solution.
	
	Formally, let $(\Xi,\mfF,\mu)$ be a probability space and consider the spatial measurable space $(\Omega,\mfB(\Omega))$, in which $\mfB(\Omega)$ is the Borel $\sigma$-algebra of $\Omega$. Let $\{w_{k}\}$ be mean-zero independent random variables with unit variance defined on $(\Xi,\mfF,\mu)$, fix $\tau  > 0$, and define the measurable map $\Phi: \Xi \times \Omega \to \mbR$ as
	\begin{align}
		\label{random_map}
		\Phi(\xi,\cdot) \coloneqq \tau \sum_{k = 1}^{\infty} (1 + \lambda_{k})^{-1/2} \ w_{k}(\xi) \ \phi_{k}(\cdot)
	\end{align}
	recalling that $\lambda_{k}$ and $\phi_{k}$ are the eigenvalues and eigenfunctions of the Dirichlet Laplacian (cf. \eqref{DLoper}). We omit the scale parameter $\tau$ from $\Phi$ to ease notation.
	
	The random test functions we consider will be realisations of $\Phi$ which we show are in $\dot{H}^{1 - d/2 - \eps}(\Omega)$ for any $\eps > 0$. In particular, realisations of $\Phi$ are in $L^{2}(\Omega)$ only if $d = 1$, otherwise they are distributions in $\dot{H}^{-(d/2 + \eps - 1)}(\Omega)$. For $n \geq 1$, let
	\begin{align*}
		\Phi^{(n)}(\xi,\cdot) = \tau \sum_{k = 1}^{n} (1 + \lambda_{k})^{-1/2}  \ w_{k}(\xi) \ \phi_{k}(\cdot)
	\end{align*}
	be the partial sums of $\Phi$ which satisfy $\Phi^{(n)}(\xi,\cdot) \in H_{0}^{1}(\Omega)$ almost surely. We denote by $\mu[\cdot]$ expectation under $\mu$.
	
	\begin{lemma}
		\label{lemma_traj}
		The random map $\Phi$ is almost surely in $\dot{H}^{1 - d/2 - \eps}(\Omega)$ for any $\eps > 0$.
	\end{lemma}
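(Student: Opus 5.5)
Set $s = 1 - d/2 - \eps$ and compute the expected squared $\dot{H}^{s}$ norm of $\Phi$, then apply Weyl's law \eqref{wlaw}. For negative $s$ the space $\dot{H}^{s}(\Omega)$ is read as the dual space $\dot{H}^{-|s|}(\Omega)$. Its norm is $\sum_k \lambda_k^{s} |\Phi(\phi_k)|^2$, where the action of $\Phi(\xi,\cdot)$ on $\phi_k$ is its $k$-th coefficient.

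The coefficient of $\Phi(\xi,\cdot)$ against $\phi_k$ is $\tau(1+\lambda_k)^{-1/2} w_k(\xi)$. Hence, formally,
\begin{align*}
\lVert \Phi(\xi,\cdot) \rVert_{\dot{H}^{s}}^{2} = \tau^{2} \sum_{k=1}^{\infty} \lambda_k^{s} (1+\lambda_k)^{-1} w_k(\xi)^2 .
\end{align*}
To make this rigorous, I work with the partial sums $\Phi^{(n)}$. Each lies in $H_0^1(\Omega)$ and therefore in every $\dot{H}^{s}$ with $s \le 1$, and for these the identity is exact. The sum is a series of nonnegative terms. By monotone convergence, with $\mu[w_k^2]=1$,
\begin{align*}
\mu\Big[\sum_{k} \lambda_k^{s}(1+\lambda_k)^{-1} w_k^2\Big] = \sum_k \lambda_k^{s}(1+\lambda_k)^{-1} \le C \sum_k \lambda_k^{s-1} \le C\sum_k k^{2(s-1)/d}.
\end{align*}
The last step uses \eqref{wlaw}, with the lower or upper Weyl bound chosen according to the sign of $s-1<0$, which is the lower bound $\lambda_k \ge c k^{2/d}$. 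The exponent is $2(s-1)/d = 2(-d/2-\eps)/d = -1 - 2\eps/d < -1$, so the series converges.

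Finite expectation implies the random series $S(\xi) := \sum_k \lambda_k^{s}(1+\lambda_k)^{-1} w_k(\xi)^2$ is finite $\mu$-almost surely. On that full-measure event, the partial sums $\Phi^{(n)}(\xi,\cdot)$ form a Cauchy sequence in the Hilbert space $\dot{H}^{s}(\Omega)$, because $\lVert \Phi^{(m)}-\Phi^{(n)}\rVert_{\dot H^s}^2$ is a tail of the convergent series $S(\xi)$. Their limit is the element with coefficients $\tau(1+\lambda_k)^{-1/2}w_k(\xi)$, and this is exactly how \eqref{random_map} is to be understood. So $\Phi(\xi,\cdot)\in\dot{H}^{s}(\Omega)$ almost surely.

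Finally, the statement is for any $\eps>0$, and a countable intersection of full-measure events over $\eps = 1/m$ still has full measure. Since the spaces are nested, $\dot H^{s}\hookrightarrow \dot H^{s'}$ for $s'<s$ (this uses $\lambda_k \ge \lambda_1>0$), this gives the statement simultaneously for all $\eps$ on one event.

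The main obstacle is not analytic but definitional. I need to make precise in what sense $\Phi$ is an element of $\dot H^{s}$ for $s<0$: the space of sequences or functionals with finite weighted coefficient norm, equivalently the completion of $L^2$ under that norm. I also need to check that the almost-sure limit is measurable in $\xi$. Measurability should follow because it is a pointwise limit of the measurable partial sums.
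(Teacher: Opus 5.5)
Your proposal is correct and follows essentially the same route as the paper. Both arguments use monotone convergence to compute $\mu\bigl[\sup_n \lVert \Phi^{(n)}\rVert_{\dot H^{s}}^{2}\bigr] = \tau^{2}\sum_k \lambda_k^{s}(1+\lambda_k)^{-1}$, then the lower Weyl bound to obtain summability with exponent $-1-2\eps/d$, and then a Cauchy/completeness argument in $\dot H^{s}(\Omega)$, since the increments are tails of an almost surely convergent random series. Your closing remark, that one full-measure event serves all $\eps>0$ at once (take $\eps = 1/m$ and use the nesting $\dot H^{s}\hookrightarrow \dot H^{s'}$ for $s'<s$), is a valid small strengthening that the paper does not state.
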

	\begin{proof}
		We proceed as in \cite[Proposition~3.1]{korte2025smoothness}. Denote $t = 1 - d/2 - \eps$ and $A \coloneqq A(\xi) =  \sup_{n} \lVert \Phi^{(n)}(\xi,\cdot) \rVert_{\dot{H}^{t}}^{2}$, which exists since the sequence of norms\footnote{With the abuse of notation $\lVert \cdot \rVert_{-|t|} = \lVert \cdot \rVert_{\dot{H}^{t}}$ for $t < 0$.} is increasing for all $\xi \in \Xi$. By the monotone convergence theorem, the definition of the norm in $\dot{H}^{t}(\Omega)$ (cf. \eqref{norms}) and Weyl's law (cf. \eqref{wlaw}),
		\begin{align*}
			\mu\left[A\right] &= \mu\left[\sup_{n} \lVert \Phi^{(n)}(\xi,\cdot) \rVert_{\dot{H}^{t}}^{2}\right] = \lim\limits_{n \to \infty} \mu\left[\sum_{k = 1}^{n} \lambda_{k}^{t} \ |\langle \Phi^{(n)}(\xi,\cdot),\phi_{k} \rangle|^{2}\right] \\
			&= \lim\limits_{n \to \infty} \tau^{2} \sum_{k = 1}^{n} \lambda_{k}^{t} \ (1 + \lambda_{k})^{-1} \ \mu\left[(w_{k}(\xi))^{2}\right]\\
			&\leq \lim\limits_{n \to \infty} C \sum_{k = 1}^{n} (k^{2/d})^{t - 1} = \lim\limits_{n \to \infty} C \sum_{k = 1}^{n} k^{-1 - 2\eps/d} < \infty
		\end{align*}
		in which the third equality follows since $\{\phi_{k}\}$ is an orthonormal basis. This implies that $A$ is almost surely finite so $A - \lVert \Phi^{(n)}(\xi,\cdot) \rVert_{\dot{H}^{t}}^{2} \to 0$ as $n \to \infty$ almost surely. Now, for any $n_{0} \leq n \leq m$,
		\begin{align*}
			\lVert \Phi^{(n)} - \Phi^{(m)} \rVert_{\dot{H}^{t}}^{2} &= \tau^{2} \sum_{k = n + 1}^{m} \lambda_{k}^{t} \ (1 + \lambda_{k})^{-1} \ (w_{k}(\xi))^{2} \\
			&\leq \tau^{2} \sum_{k = n_{0} + 1}^{\infty} \lambda_{k}^{t} \ (1 + \lambda_{k})^{-1} \ (w_{k}(\xi))^{2} \\
			&= A - \lVert \Phi^{(n_{0})}(\xi,\cdot) \rVert_{\dot{H}^{t}}^{2} \to 0 \text{ as } n_{0} \to \infty
		\end{align*}
		so $\Phi^{(n)}$ is almost surely a Cauchy sequence in $\dot{H}^{t}(\Omega)$, hence has a limit $\Phi(\xi,\cdot) \in \dot{H}^{t}(\Omega)$ almost surely by completeness.
	\end{proof}
	
	For $u \in H_{0}^{1}(\Omega)$, define
	\begin{align}
		\label{Phi_norm}
		\lVert R_{u} \rVert_{\Phi}^{2} \coloneqq \lim\limits_{n \to \infty} \int_{\Xi} |R_{u}(\Phi^{(n)}(\xi,\cdot))|^{2} \ d\mu(\xi) = \lim\limits_{n \to \infty} \mu[|R_{u}(\Phi^{(n)})|^2].
	\end{align}
	We may also denote $\lVert R_{u} \rVert_{\Phi}^{2}$ as $\lVert Lu - f \rVert_{\Phi}^{2}$. Although $\lVert R_{u} \rVert_{\Phi}$ depends on both $\Phi$ and $\mu$, we omit the latter to ease notation. Observe that, since $\{w_{k}\}$ are independent, with mean zero and unit variance,
	\begin{align}
		\label{multi_tau} \nonumber
		\lVert R_{u} \rVert_{\Phi}^{2} &= \lim\limits_{n \to \infty} \mu\left[|R_{u}(\Phi^{(n)})|^{2}\right] \\ \nonumber
		&= \lim\limits_{n \to \infty} \tau^{2} \sum_{k,k^{\prime} = 1}^{n} (1 + \lambda_{k})^{-1/2}(1 + \lambda_{k^{\prime}})^{-1/2} R_{u}(\phi_{k}) R_{u}(\phi_{k^{\prime}}) \mu\left[w_{k} w_{k^{\prime}}\right]\\ \nonumber
		&= \lim\limits_{n \to \infty} \tau^{2} \sum_{k = 1}^{n} (1 + \lambda_{k})^{-1} \ |R_{u}(\phi_{k})|^{2} = \tau^{2} \sum_{k = 1}^{\infty} (1 + \lambda_{k})^{-1} \ |R_{u}(\phi_{k})|^{2}\\
		&\leq C  \sum_{k = 1}^{\infty} \lambda_{k}^{-1} \ |R_{u}(\phi_{k})|^{2} = C \lVert R_{u} \rVert_{-1}^{2} < \infty
	\end{align}
	so $\lVert R_{u} \rVert_{\Phi}$ is well-defined for all $u \in H_{0}^{1}(\Omega)$.
	
	We define the $\Phi$-stochastically weak solutions of \eqref{basic_problem} based on $\lVert R_{u} \rVert_{\Phi}$.
	
	\begin{definition}
		\label{def_mu_weak}
		A $\Phi$-stochastically weak solution of the homogeneous Dirichlet problem \eqref{basic_problem} is any $u \in H_{0}^{1}(\Omega)$ satisfying $\lVert R_{u} \rVert_{\Phi} = 0$.
	\end{definition}
	
	Define
	\begin{align*}
		R_{u}(\Phi(\xi,\cdot)) \coloneqq \lim\limits_{n \to \infty} R_{u}(\Phi^{(n)}(\xi,\cdot))
	\end{align*}
	for $\xi \in \Xi$. We show $|R_{u}(\Phi(\xi,\cdot))| < \infty$ almost surely and that $\lVert R_{u} \rVert_{-1}$ is equivalent to $\lVert R_{u} \rVert_{\Phi}$, which will imply that the weak and $\Phi$-stochastically weak solutions coincide.
	
	\begin{proposition}
		\label{prop_equiv}
		For all $\tau > 0$, $|R_{u}(\Phi(\xi,\cdot))| < \infty$ almost surely and there exists $c, C \coloneqq c(\Omega,\tau),C(\Omega,\tau) > 0$ such that
		\begin{align}
			\label{equiv}
			c \ \lVert R_{u} \rVert_{\Phi}^{2} \leq	\lVert R_{u} \rVert_{-1}^{2} \leq C \ \lVert R_{u} \rVert_{\Phi}^{2}
		\end{align}
		for all $u \in H_{0}^{1}(\Omega)$.
	\end{proposition}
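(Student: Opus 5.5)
The plan is to reduce everything to the spectral identity already computed in \eqref{multi_tau}, namely
\begin{align*}
\lVert R_{u} \rVert_{\Phi}^{2} = \tau^{2} \sum_{k=1}^{\infty} (1+\lambda_{k})^{-1} |R_{u}(\phi_{k})|^{2},
\end{align*}
and to compare it term by term with $\lVert R_{u}\rVert_{-1}^{2} = \sum_{k} \lambda_{k}^{-1}|R_{u}(\phi_{k})|^{2}$. The eigenvalues of the Dirichlet Laplacian on a bounded domain are positive and non-decreasing, so $\lambda_{k} \geq \lambda_{1} > 0$ for all $k$. This gives
\begin{align*}
\frac{\lambda_{k}}{1+\lambda_{k}} \in \Bigl[\frac{\lambda_{1}}{1+\lambda_{1}}, 1\Bigr).
\end{align*}
Hence
\begin{align*}
\frac{\lambda_{1}}{1+\lambda_{1}}\lambda_{k}^{-1} \leq (1+\lambda_{k})^{-1} \leq \lambda_{k}^{-1}.
\end{align*}
Multiplying by $\tau^{2}|R_{u}(\phi_{k})|^{2}$ and summing over $k$ yields
\begin{align*}
\tau^{2}\frac{\lambda_{1}}{1+\lambda_{1}}\lVert R_{u}\rVert_{-1}^{2} \leq \lVert R_{u}\rVert_{\Phi}^{2} \leq \tau^{2}\lVert R_{u}\rVert_{-1}^{2}.
\end{align*}
This is \eqref{equiv} with $c = \tau^{-2}$ and $C = \tau^{-2}(1+\lambda_{1})/\lambda_{1}$. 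Both constants depend only on $\Omega$ (through $\lambda_{1}$) and on $\tau$, and not on $L$. The sums converge because $R_{u}\in H^{-1}(\Omega)$.

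For the almost sure finiteness of $R_{u}(\Phi(\xi,\cdot))$, I will show that the partial sums
\begin{align*}
S_{n}(\xi) \coloneqq R_{u}(\Phi^{(n)}(\xi,\cdot)) = \tau\sum_{k=1}^{n}(1+\lambda_{k})^{-1/2}R_{u}(\phi_{k})\,w_{k}(\xi)
\end{align*}
converge almost surely. Each $S_{n}$ is a sum of independent, mean-zero random variables, and
\begin{align*}
\sum_{k}\operatorname{Var}\bigl(\tau(1+\lambda_{k})^{-1/2}R_{u}(\phi_{k})w_{k}\bigr) = \lVert R_{u}\rVert_{\Phi}^{2} < \infty
\end{align*}
by the first step. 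Kolmogorov's two-series theorem (or the $L^{2}$-bounded martingale convergence theorem applied to $S_{n}$ with the filtration generated by $w_{1},\dots,w_{n}$) then gives almost sure convergence to a finite limit. Hence $|R_{u}(\Phi(\xi,\cdot))|<\infty$ almost surely. The same martingale argument gives convergence in $L^{2}(\mu)$, so that $\mu[|R_{u}(\Phi)|^{2}] = \lVert R_{u}\rVert_{\Phi}^{2}$; this is worth recording because it identifies the definition \eqref{Phi_norm} with the expectation against the limit $\Phi$.

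The only delicate point is the almost sure convergence. Lemma \ref{lemma_traj} places $\Phi$ only in $\dot{H}^{1-d/2-\eps}$, which is rougher than $H_{0}^{1}$ when $d\geq 1$, so $R_{u}(\Phi)$ cannot be defined by duality pathwise. The argument must therefore rely on independence and the summable variances rather than on regularity of the realisations. The rest is elementary, and the only spectral input needed is $\lambda_{1}>0$; Weyl's law is not required.
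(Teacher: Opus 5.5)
Your proposal is correct and is essentially the paper's proof: you use the spectral identity \eqref{multi_tau} and compare the weights $(1+\lambda_{k})^{-1}$ and $\lambda_{k}^{-1}$ term by term, and your explicit constants $c=\tau^{-2}$ and $C=\tau^{-2}(1+\lambda_{1})/\lambda_{1}$ are exactly the paper's $\inf_{k}$ and $\sup_{k}$ of the ratio $\lambda_{k}^{-1}/(\tau^{2}(1+\lambda_{k})^{-1}) = \tau^{-2}(1+\lambda_{k}^{-1})$. The almost sure finiteness follows, as in the paper, from Kolmogorov's convergence theorem for series of independent mean-zero variables with summable variances; your remarks that only $\lambda_{1}>0$ is needed and that the convergence also holds in $L^{2}(\mu)$ are correct small refinements.
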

	\begin{proof}
		Fix $u \in H_{0}^{1}(\Omega)$ and let $ a_{k} = R_{u}(\phi_{k})$. By the linearity of $R_{u}$,
		\begin{align*}
			R_{u}(\Phi^{(n)}(\xi,\cdot)) = \tau \sum_{k = 1}^{n} (1 + \lambda_{k})^{-1/2} \, w_{k}(\xi) \, a_{k},
		\end{align*}
		and it follows from Kolmogorov's two-series Theorem \cite[Theorem~2.5.6]{durrett2019probability} that
		\begin{align*}
			R_{u}(\Phi(\xi,\cdot)) = \lim\limits_{n \to \infty} \tau \sum_{k = 1}^{n} (1 + \lambda_{k})^{-1/2} \, w_{k}(\xi) \, a_{k} < \infty
		\end{align*}
		almost surely since
		\begin{equation*}
			\tau^{2} \sum_{k = 1}^{\infty} \mu\left[(1 + \lambda_{k})^{-1} (w_{k}(\xi))^{2} |a_{k}|^{2}\right] = \tau^{2} \sum_{k = 1}^{\infty} (1 + \lambda_{k})^{-1} |a_{k}|^{2} < \infty
		\end{equation*}
		by \eqref{multi_tau}. 
		
		The inequalities \eqref{equiv} are direct by recalling that
		\begin{align*}
			\lVert R_{u} \rVert_{\Phi}^{2} = \tau^{2} \sum_{k = 1}^{\infty} (1 + \lambda_{k})^{-1} \ |R_{u}(\phi_{k})|^{2} & & 
			\lVert R_{u} \rVert_{-1}^{2} = \sum_{k = 1}^{\infty} \lambda_{k}^{-1} \ |R_{u}(\phi_{k})|^{2}
		\end{align*}
		and taking
		\begin{align*}
			c = \inf_{k} \frac{\lambda_{k}^{-1} }{\tau^{2} (1 + \lambda_{k})^{-1}} & & C = \sup_{k} \frac{\lambda_{k}^{-1} }{\tau^{2} (1 + \lambda_{k})^{-1}}.
		\end{align*}
		Since $\lambda_k \geq \lambda_1 > 0$ for all $k$ and $\lambda_k \to \infty$ as $k \to \infty$, we have $c > 0$ and $C < \infty$.
	\end{proof}
	
	\begin{remark}
		We note that one can define $\lVert R \rVert_{\Phi}$ analogously to \eqref{Phi_norm} for all $R \in H^{-1}(\Omega)$ and the proof of Proposition \ref{prop_equiv} applied to $R$ actually implies the equivalence between the $H^{-1}(\Omega)$ and $\Phi$-stochastically weak norms. We also note that the unit variance assumption can be eased to, for instance, $\sup_{k} \text{var}(w_{k}) < \infty$ and the results above remain true.
	\end{remark}
	
	We have shown that the weak solutions and the $\Phi$-stochastically weak solutions coincide, as stated in the next corollary. In particular, the latter is unique if, and only if, the weak solution is unique.
	
	\begin{corollary}
		A $u \in H_{0}^{1}(\Omega)$ is a weak solution of \eqref{basic_problem} if, and only if, it is a $\Phi$-stochastically weak solution for any sequence $\{w_{k}\}$ of independent random variables with mean zero and unit variance.
	\end{corollary}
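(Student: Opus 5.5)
The corollary should follow almost immediately from Proposition \ref{prop_equiv}. The plan is to fix an arbitrary sequence $\{w_{k}\}$ of independent, mean-zero, unit-variance random variables, form the associated $\Phi$ via \eqref{random_map}, and show that for $u \in H_{0}^{1}(\Omega)$ the conditions $\lVert R_{u} \rVert_{-1} = 0$ and $\lVert R_{u} \rVert_{\Phi} = 0$ are equivalent. The first condition is the definition of a weak solution; the second is Definition \ref{def_mu_weak}.

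The main step is to use the identity derived in \eqref{multi_tau}:
\begin{align*}
\lVert R_{u} \rVert_{\Phi}^{2} = \tau^{2} \sum_{k=1}^{\infty} (1+\lambda_{k})^{-1} |R_{u}(\phi_{k})|^{2}.
\end{align*}
This identity uses only independence, zero mean and unit variance of the $w_{k}$. It does not depend on the distribution of the $w_{k}$ beyond these moments, nor on the operator $L$.

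Proposition \ref{prop_equiv} then gives $c\lVert R_{u}\rVert_{\Phi}^{2} \le \lVert R_{u}\rVert_{-1}^{2} \le C\lVert R_{u}\rVert_{\Phi}^{2}$ with $0 < c \le C < \infty$. Both directions follow at once:
\begin{itemize}
\item If $\lVert R_{u}\rVert_{-1} = 0$, the left inequality and $c > 0$ force $\lVert R_{u}\rVert_{\Phi} = 0$.
\item If $\lVert R_{u}\rVert_{\Phi} = 0$, the right inequality and $C < \infty$ force $\lVert R_{u}\rVert_{-1} = 0$, so $R_{u}(\varphi) = 0$ for all $\varphi \in H_{0}^{1}(\Omega)$.
\end{itemize}
Since the sequence $\{w_{k}\}$ was arbitrary, the equivalence holds for every admissible choice.

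The only point requiring any care is reading the quantifier \emph{for any sequence} correctly. Being a weak solution is equivalent to being a $\Phi$-stochastically weak solution for one such sequence. It is therefore also equivalent to being one for all such sequences, because the constants $c,C$ depend only on $\Omega$ and $\tau$.

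One can also note directly that vanishing of either series is equivalent to $R_{u}(\phi_{k}) = 0$ for all $k$, because the weights $\lambda_{k}^{-1}$ and $(1+\lambda_{k})^{-1}$ are strictly positive. This gives a density-free check that bypasses even the constants. I expect no real obstacle beyond this bookkeeping.
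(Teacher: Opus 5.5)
Your proposal is correct and takes the same route as the paper, which obtains the corollary directly from the two-sided bound of Proposition \ref{prop_equiv}, using that $c>0$ and $C<\infty$. On the quantifier, the more direct reason it works is that $\lVert R_{u}\rVert_{\Phi}^{2}=\tau^{2}\sum_{k}(1+\lambda_{k})^{-1}|R_{u}(\phi_{k})|^{2}$ does not depend on the law of $\{w_{k}\}$ at all; the fact that the constants depend only on $\Omega$ and $\tau$ is not what is needed.
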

	
	\subsection{Inhomogeneous boundary conditions}
	
	An analogous equivalence between weak and $\Phi$-stochastically weak solutions holds for inhomogeneous Dirichlet problems
	\begin{equation}
		\label{inh_problem}
		\begin{aligned}
			Lu = f & & \text{ in } \Omega \\
			u = g & & \text{ on } \partial \Omega
		\end{aligned}
	\end{equation}
	for $f \in H^{-1}(\Omega)$ and $g \in H^{1/2}(\partial\Omega)$. A weak solution is a $u \in H^{1}(\Omega)$ satisfying $u|_{\partial\Omega} = g$ and $\lVert R_{u} \rVert_{-1} = 0$. We assume that the weak solution is unique (e.g., $L$ is bounded and coercive). We note that it coincides with the strong solution when $f \in L^{2}(\Omega)$ and $\Omega$ and/or its boundary is more regular, and we refer to \cite[Chapter~7]{hackbusch2017elliptic} for sufficient conditions for the solution to be unique.
	
	Since $Lu - f \in H^{-1}(\Omega)$ when $f \in H^{-1}(\Omega)$ and $u \in H^{1}(\Omega)$, the equivalence between weak and $\Phi$-stochastically weak solutions follows from a proof analogous to that of Proposition \ref{prop_equiv} which yields $c \ \lVert R_{u} \rVert_{\Phi}^{2} \leq	\lVert R_{u} \rVert_{-1}^{2} \leq C \ \lVert R_{u} \rVert_{\Phi}^{2}$ for all $u \in H^{1}(\Omega)$ with $u|_{\partial\Omega} = g$.
	
	\section{Stochastic variational physics-informed neural networks}
	\label{SecSV}
	
	In view of the equivalence between the $H^{-1}(\Omega)$ and $\Phi$ norms of the residual functional $R_{u}$, approximations to the weak solution of the boundary value problem \eqref{basic_problem} can be obtained by minimising $\lVert R_{u} \rVert_{\Phi}^{2}$ over $u$ in a rich class of functions contained in $H_{0}^{1}(\Omega)$. Likewise, approximations to the weak solution of the inhomogeneous boundary value problem \eqref{inh_problem} can be obtained by minimising $\lVert R_{u} \rVert_{\Phi}^{2}$ over $u$ in a rich class of functions contained in $H^{1}(\Omega)$ while enforcing $u|_{\partial\Omega} = g$.
	
	In this section, we propose to approximate the respective solution by minimising the \textit{empirical stochastically weak norm} of $\tilde{R}_{u_{\theta}}$, a numerical approximation of $R_{u_{\theta}}$, over the parameters $\theta$ of a neural network architecture that represents a function $u_{\theta}: \Omega \to \mbR$. Assuming that $f \in L^{2}(\Omega)$, the empirical norm squared is computed as the average of $(\tilde{R}_{u_{\theta}}(\varphi_{i}))^2$ over $\varphi_{1},\dots,\varphi_{N}$ sampled from an approximation of the truncated random map $\Phi^{(n)}$, where $\tilde{R}_{u_{\theta}}(\varphi)$ is given by numerically approximating the residual $R_{u_{\theta}}(\varphi)$, which in this case reduces to the integral
	\begin{equation}
		\label{res_L2}
		R_{u_{\theta}}(\varphi) = \int_{\Omega} (Lu_{\theta} - f)(x) \, \varphi(x) \ dx.
	\end{equation}
	This yields a general, stable and efficient algorithm, which we call \textit{stochastic variational physics-informed neural networks} (SV-PINNs), for approximating the solution of second-order linear elliptic PDEs.
	
	In Section \ref{Sec31}, we define the neural network architecture we will consider in this paper and in Section \ref{Sec32} we formally define the SV-PINNs for both homogeneous and inhomogeneous boundary conditions. In Section \ref{Sec34}, we propose a method to sample test functions in hypercube, circular and L-shaped domains and in Section \ref{Sec33} we propose two algorithms for training SV-PINNs, based on gradient descent and \lb.
	
	\subsection{Neural network ansatz}
	\label{Sec31}
	
	Empirical evidence suggests that neural networks with residual connections perform better for approximating the solution of PDEs; see for example \cite{wang2024piratenets,wang2021understanding,yu2025natural}. In this paper, we will consider the modified multilayer perceptron (MLP) architecture proposed by \cite{wang2021understanding}, which has multiplicative residual connections and is as follows\footnote{We have conducted some tests with the \textit{piratenets} proposed by \cite{wang2024piratenets} as an extension of the modified MLP, but we did not obtain better results, so we opted for the simpler architecture.}.
	
	The input of the neural network is a vector $x \in \mbR^{d \times 1}$ which is encoded into two higher dimensional vectors
	\begin{align*}
		\bs{U}(x) = \sigma(\bs{W}_{1} x + \bs{b}_{1}) & & \bs{V}(x) = \sigma(\bs{W}_{2} x + \bs{b}_{2})
	\end{align*}
	for $\bs{W}_{1}, \bs{W}_{2} \in \mbR^{r \times d}$ and $\bs{b}_{1}, \bs{b}_{2} \in \mbR^{r \times 1}$, and an activation function $\sigma: \mbR \to \mbR$ that is applied coordinate-wise to the respective vector. We consider $\sigma = \tanh$ and we may fix the bias terms $\bs{b}_{1},\bs{b}_{2} = 0$, as will be discussed later.
	
	Defining $\bs{g}^{(0)}(x) = x$ for all $x \in \mbR^{d \times 1}$, the modified MLP with $K$ hidden layers of $r$ nodes computes a function of $x$ iteratively for each hidden layer $l = 1,\dots,K$ as
	\begin{equation*}
		\begin{aligned}
			\bs{f}^{(l)}(x) &= \sigma(\bs{W}^{(l)} \bs{g}^{(l-1)}(x) + \bs{b}^{(l)})\\
			\bs{g}^{(l)}(x) &= \bs{f}^{(l)}(x) \odot \bs{U}(x) + (1 - \bs{f}^{(l)}(x)) \odot \bs{V}(x)
		\end{aligned}
	\end{equation*}
	where $\odot$ means coordinate-wise multiplication, $\bs{W}^{(1)} \in \mbR^{r \times d}$, $\bs{W}^{(l)} \in \mbR^{r \times r}, l > 1$, and $\bs{b}^{(l)} \in \mbR^{r \times 1}$, which may be fixed to zero. The output of the neural network is then given by
	\begin{align*}
			u_{\theta}(x) \coloneqq \bs{W}^{(K + 1)} \bs{g}^{(K)}(x) + \bs{b}^{(K + 1)}
	\end{align*}
	in which $\bs{W}^{(K+1)} \in \mbR^{1 \times r}$ and $\bs{b}^{(K + 1)} \in \mbR$, that may also be fixed to zero, and
	\begin{align*}
		\theta \coloneqq \left\{\bs{W}_{1},\bs{W}_{2},(\bs{W}^{(l)},\bs{b}^{(l)})_{l= 1}^{K + 1}\right\}
	\end{align*}
	are the parameters of the architecture. Observe that a residual connection is given as the multiplication by the encodings $\bs{U}$ and $\bs{V}$ of the input. The parameters are initialised with the Glorot scheme \cite{glorot2010understanding}.
	
	\subsubsection{Domain-aware Fourier features and hard boundary constraints}
	
	Neural networks are known to have training spectral bias towards low-frequency functions which yields frequency-dependent learning speeds \cite{rahaman2019spectral}. This fact has also been observed in PINNs through an analysis of its neural tangent kernel, and an encoding of the input into random Fourier features was proposed to allow recovering high-frequency, multi-scale solutions \cite{wang2021eigenvector}. Recently, \cite{calero2026enhancing} proposed the domain-aware Fourier features (DAFF), an encoding by the eigenfunctions of the Dirichlet Laplacian in $\Omega$ as a manner of changing the spectral bias while enforcing zero boundary conditions in the neural network architecture. This encoding is especially suitable for solving \eqref{basic_problem} and will be considered in the SV-PINNs.
	
	For a vector $\bs{k} = (k_{1},\dots,k_{d}) \in \mbN^{d}$, let $\phi_{\bs{k}}$ be the eigenfunctions of the Dirichlet Laplacian that satisfy\footnote{We index the eigenfunctions and eigenvalues by vectors in $\mbN^{d}$ now to facilitate the application of the method to hypercube domains in $\mbR^{d}$.}
	\begin{align*}
		-\Delta \phi_{\bs{k}} = \lambda_{\bs{k}}\phi_{\bs{k}} & & \phi_{\bs{k}}|_{\partial\Omega} = 0
	\end{align*}
	with eigenvalues $\lambda_{\bs{k}}$. For a subset $\bs{I} = \{\bs{k}_{1},\dots,\bs{k}_{m}\}$ of $m$ vectors, we consider the encoding
	\begin{align*}
		\phi_{\bs{I}}(x) = \begin{pmatrix}
			\phi_{\bs{k}_{1}}(x) & \cdots & \phi_{\bs{k}_{m}}(x)
		\end{pmatrix}
	\end{align*}
	for $x \in \Omega$ which is a mapping $\phi: \Omega \subset \mbR^{d \times 1} \to \mbR^{m \times 1}$. For example, when $\Omega = (0,1)^{d}$, the eigenfunctions are given by the product of sines
	\begin{equation}
		\label{eigenf}
		\phi_{\bs{k}}(x) = 2^{d/2} \prod_{j = 1}^{d} \sin\left(k_{j} \, \pi \, x_{j}\right)
	\end{equation}
	and $\lambda_{\bs{k}} = \pi^{2} \sum_{j} k_{j}^{2}$. Then, instead of inputting $x$ to the modified MLP, we input the domain-aware feature vector $\phi_{\bs{I}}(x)$, making the obvious modifications in the architecture (e.g., exchanging $d$ by $m$ in the dimension of the matrices). 
	
	We note that DAFF differ from random Fourier features, in which the feature map is given by functions
	\begin{align*}
		\phi_{j}(x) = \begin{pmatrix}
			\sin(\bs{A}_{j}x) & \cos(\bs{A}_{j}x)
		\end{pmatrix}
	\end{align*}
	for a matrix $\bs{A}_{j}$ with entries sampled independently from a Gaussian distribution with mean zero and a standard deviation associated with the magnitude of the frequency desired on the features; see \cite{wang2021eigenvector} for more details. On the other hand, DAFF select explicitly the desired frequencies $\bs{I}$ without any randomness. Furthermore, since $\phi_{\bs{k}}$ equals zero on the boundary and $\sigma(0) = 0$ when $\sigma$ is the \textit{tanh}, if DAFF are considered and the bias terms are fixed to zero, then $u_{\theta}|_{\partial\Omega} = 0$ for all values of $\theta$.
	
	Indeed, since $\phi_{\bs{k}}(x) = 0$ for $x \in \partial\Omega$ for any vector $\bs{k}$, we conclude that $\sigma(\bs{W} \phi_{\bs{I}}(x)) = \bs{0}$  for any matrix $\bs{W}$ which implies that $u_{\theta}(x) = 0$ since it is given by the composition and multiplication of analogous expressions. Therefore, DAFF not only nudge the spectral bias of neural networks towards desired frequencies, but, by taking the bias terms as zero, they impose a constraint on the architecture such that zero boundary conditions are automatically satisfied. In order to approximate the solution of inhomogeneous boundary value problems, it is enough to consider DAFF encoding with the biases as learnable parameters or consider Fourier features. We refer to \cite{calero2026enhancing} for more details about DAFF.	
	
	\subsection{SV-PINNs formulation}
	\label{Sec32}
	
	The SV-PINN with DAFF hard constraints to approximate the solution of the homogeneous boundary value problem \eqref{basic_problem} is the solution of the optimisation problem
	\begin{align}
		\label{sol_SVPINNs}
		\min\limits_{\theta} \, \lVert Lu_{\theta} - f \rVert_{\Phi}^{2}
	\end{align}
	that is, finding a set of parameters of the fixed neural network architecture that minimises the $\Phi$-stochastically weak norm of the residual of the PDE \eqref{basic_problem}. We note that the solution of \eqref{sol_SVPINNs} does not depend on the scale parameter $\tau$ since it is a multiplicative factor on $\lVert \cdot \rVert_{\Phi}$ (cf. \eqref{multi_tau}).
	
	For the inhomogeneous boundary value problem \eqref{inh_problem}, the SV-PINN is the solution of 
	\begin{align*}
		\min\limits_{\theta} \, \lVert Lu_{\theta} - f \rVert_{\Phi}^{2} + \lambda \ \lVert u_{\theta} - g\rVert_{L^{2}(\partial\Omega)}^{2}
	\end{align*}
	for a weight $\lambda > 0$, in which the optimisation is over the parameters of a neural network architecture without hard boundary constraints (e.g., with DAFF, but trainable bias terms). We observe that, in this case, the solution depends on $\tau$, more specifically, on the ratio $\tau^{2}/\lambda$. 
	
	In contrast, for $f \in L^{2}(\Omega)$, the standard PINN formulation solves
	\begin{align*}
		\min\limits_{\theta} \, \lVert Lu_{\theta} - f \rVert_{L^{2}}^{2} + \lambda \ \lVert u_{\theta} - g\rVert_{L^{2}(\partial\Omega)}^{2}
	\end{align*}
	in which the stochastically weak norm of the residual is replaced by the $L^{2}(\Omega)$ norm, and $\lambda = 0$ if hard boundary constraints are inserted into the architecture. In practice, the $L^2$ norms are replaced by empirical versions and PINNs are trained by minimising the loss function
	\begin{align*}
		\mcL_{PINNs}(\theta) &= \mcL_{c}(\theta) + \lambda \, \mcL_{b}(\theta) \\
		&\coloneqq \frac{1}{N_{c}} \sum_{i = 1}^{N_{c}} (Lu_{\theta}(x_{c}^{(i)}) - f(x_{c}^{(i)}))^{2} + \frac{\lambda}{N_{b}} \sum_{i = 1}^{N_{b}} (u_{\theta}(x_{b}^{(i)}) - g(x_{b}^{(i)}))^{2}
	\end{align*}
	in which $x_{c}^{(i)} \in \Omega$ are collocation points where the PDE is strongly enforced and $x_{b}^{(i)} \in \partial\Omega$ are boundary points in which the boundary condition is strongly enforced.
	
	In order to train SV-PINNs, the $\Phi$ norm of the residuals can be approximated by an empirical average. This can be accomplished by sampling test functions $\varphi_{1},\dots,\varphi_{N}$ from an approximation of the truncated random map $\Phi^{(n)}$, evaluating them at the collocation points $x_{c}^{(i)}$ and approximating $\lVert Lu_{\theta} - f \rVert_{\Phi}^{2}$ by
	\begin{align}
		\label{app_sw}
		\mcL_{\Phi^{(n)}}(\theta) \coloneqq \frac{1}{N} \sum_{j = 1}^{N} \left(\frac{1}{N_{c}} \sum_{i=1}^{N_{c}} (Lu_{\theta}(x_{c}^{(i)}) - f(x_{c}^{(i)}))\varphi_{j}(x_{c}^{(i)})\right)^{2}
	\end{align}
	in which the inner sum is an approximation of the integral \eqref{res_L2}. The SV-PINNs are then trained by minimising
	\begin{align}
		\label{loss_SVPINNs}
		\mcL_{SV}(\theta) = \mcL_{\Phi^{(n)}}(\theta) + \lambda \, \mcL_{b}(\theta)
	\end{align}
	with $\lambda = 0$ when the architecture is constrained to satisfy the boundary conditions.
	
	\subsection{Sampling test functions from the Whittle-Matérn process}
	\label{Sec34}
	
	The computational feasibility of the SV-PINNs relies on efficiently sampling the test functions and evaluating them at the collocation points so that \eqref{loss_SVPINNs} can be computed and minimised. We discuss how this can be done in hypercube, circular and L-shaped domains.
	
	\subsubsection{Hypercube domains}
		
	In hypercube domains, test functions can be sampled by solving a discretised version of a particular stochastic partial differential equation (SPDE) with the discrete sine transform (DST-I), as follows.
	
	In the special case when $\{w_{\bs{k}}\}$ are Gaussian random variables, for $s > 0$
	\begin{align}
		\label{expansion}
		\Phi_{s}(\xi,\cdot) \coloneqq \tau \sum_{\bs{k}} (1 + \lambda_{\bs{k}})^{-s/2} w_{\bs{k}}(\xi) \phi_{\bs{k}}(\cdot)
	\end{align}
	is known as the Whittle-Matérn process and is the unique solution of the homonymous SPDE
	\begin{align}
		\label{SPDE}
		(1 - \Delta)^{s/2}\Phi_{s} = \tau \, \mcW & & \Phi_{s}|_{\partial\Omega} = 0
	\end{align} 
	in which $\mcW: \Xi \times \Omega \to \mbR$ is white noise, i.e., $\mcW(x)$ is a mean zero Gaussian random variable with unit variance and $\mcW(x)$ is independent of $\mcW(y)$ for all $x,y \in \Omega$. Expression \eqref{expansion} is the Karhunen-Loève expansion \cite{wang2008karhunen} of the solution of \eqref{SPDE}.
	
	The solution $\Phi_{s}$ is a Gaussian random field, meaning that $(\Phi_{s}(x^{(1)}), \dots, \\ \Phi_{s}(x^{(n)}))$ has a multivariate Gaussian distribution for all $x^{(1)},\dots,x^{(n)} \in \Omega$. When $s > d/2$, it is a Gaussian process, meaning there exists a positive definite symmetric covariance function $\mc{K}: \Omega \times \Omega \to \mbR$, known as Matérn kernel, such that $(\Phi_{s}(x^{(1)}), \dots, \Phi_{s}(x^{(n)}))$ has a multivariate Gaussian distribution with mean zero and covariance matrix $\Sigma = [\mc{K}(x^{(i)},x^{(j)})]_{i,j}$. We note this is usually not the case in our framework since $s = 1 > d/2$ only for $d = 1$. This SPDE was first studied by Whittle \cite{whittle1954stationary,whittle1963stochastic} and we refer to \cite{bolin2023equivalence,korte2025smoothness,lindgren2022spde,lindgren2011explicit} for a modern analysis.
	
	When the domain is $\Omega = (0,1)^{d}$, in order to efficiently sample from an approximation of the truncated random map $\Phi^{(n)}$ at points in a grid of $\Omega$, we solve a discretised version of \eqref{SPDE} with $s = 1$ by applying the DST-I, solving on the discrete Fourier space, and then applying its inverse. This is an efficient algorithm, with complexity $\mathcal{O}(n^{d} \log n)$ for a grid of $n^{d}$ points, and the error between the approximation of the $\Phi$ norm of the residual functional by \eqref{app_sw} can be controlled in expectation, as will be done in Section \ref{Sec_error}.
	
	Fix a grid of $n^{d}$ points $x^{(\bs{k})} = (x_{k_{1}},\dots,x_{k_{d}}) \in \Omega$ with $x_{k_{j}} = k_{j}/(n + 1)$ for $k_{j} = 1,\dots,n$ and $j = 1,\dots,d$. Denote by $h = 1/(n + 1)$ the distance $x_{k_{j} + 1} - x_{k_{j}}$ and let $\Delta_{h}$ be the discretised Laplace operator in this grid defined as
	\begin{align*}
		\Delta_{h} u(x^{(\bs{k})}) = \frac{1}{h^{2}} \sum_{j = 1}^{d} u(x^{(\bs{k} + \bs{e}_{j})}) - 2 \, u(x^{(\bs{k})}) + u(x^{(\bs{k} - \bs{e}_{j})})
	\end{align*}
	in which $(\bs{e}_{j})_{i} = \delta_{ij}$ and $u$ is a function satisfying $u(x) = 0$ for $x \in \partial\Omega$. We consider the discretised SPDE
	\begin{equation}
		\label{discrete_WMP}
		(1 - \Delta_{h})^{1/2} \Phi^{(h)}(x^{(\bs{k})}) = \tau \ \mcW(x^{(\bs{k})}) 
	\end{equation}
	which is actually a system of $n^{d}$ stochastic equations that can be solved with the DST-I, as follows.
	
	The DST-I with orthonormal basis applied to a discretisation $u(x^{(\bs{k})})$ of a function $u$ is given by
	\begin{align}
		\label{DST} \nonumber
		DST(u)(x^{(\bs{k}^{\prime})}) &\coloneqq \hat{u}(x^{(\bs{k}^{\prime})}) = (2h)^{d/2} \sum_{|\bs{k}| \leq n} u(x^{(\bs{k})}) \, \prod_{j = 1}^{d} \sin(\pi \, k_{j} \, k^{\prime}_{j} \, h)\\
		&= h^{d/2} \sum_{|\bs{k}| \leq n} u(x^{(\bs{k})}) \, \phi_{\bs{k}}(x^{(\bs{k}^{\prime})})
	\end{align}
	in which $|\bs{k}| = \max_{j} k_{j}$ and the last equality is due to \eqref{eigenf}. This is a self-inverse transformation, so $DST^{-1} = DST$, and it can be written in matrix form as $DST(\bs{u}) = S\bs{u}$ in which $\bs{u} = (u(x^{(\bs{k})}): |\bs{k}| \leq n)^{T} \in \mbR^{n^d \times 1}$ and $S \in \mbR^{n^{d} \times n^{d}}$ satisfies $S^{T}S = I$. The DST-I diagonalises $-\Delta_{h}$ which implies that
	\begin{align*}
		DST((1 - \Delta_{h})^{t} \, u(x^{(\bs{k})})) = (1 + \lambda_{\bs{k}}^{(h)})^{t} \, \hat{u}(x^{(\bs{k}^{\prime})})
	\end{align*}
	for all $t > 0$ in which
	\begin{align*}
		\lambda_{\bs{k}}^{(h)} = \frac{4}{h^2} \sum_{j = 1}^{d} \sin^{2}\left(\frac{\pi \, k_{j} \, h}{2}\right)
	\end{align*}
	are the eigenvalues of $-\Delta_{h}$. We refer to \cite{britanak2010discrete} for more details about the DST-I and fast algorithms to compute it.
		
	Applying the DST-I with orthonormal basis to both sides of \eqref{discrete_WMP} we obtain
	\begin{equation*}
		(1 + \lambda_{\bs{k}}^{(h)})^{1/2} \widehat{\Phi^{(h)}}(x^{(\bs{k})}) = \tau \, \widehat{\mcW}(x^{(\bs{k})})
	\end{equation*}
	and by applying its inverse it follows that
	\begin{equation}
		\label{DST_sol}
		\Phi^{(h)}(x^{(\bs{k})}) = h^{-d/2} \, \tau \, DST^{-1}\left[(1 + \lambda_{\bs{k}}^{(h)})^{-1/2} \, \widehat{\mcW}(x^{(\bs{k})})\right]
	\end{equation}
	in which the $h^{-d/2}$ normalisation factor is necessary to cancel with that of \eqref{DST} so the right-hand side of \eqref{DST_sol} equals the respective truncated process.\footnote{See \eqref{t2} for more details.}
	
	By sampling $\mcW(x^{(\bs{k})})$, independent standard Gaussian random variables, at each position $x^{(\bs{k})}$ in the grid, the value of the discretised process $\Phi^{(h)}(x^{(\bs{k})})$ is obtained by formula \eqref{DST_sol}. This represents a sampled test function $\varphi_{j}(x^{(\bs{k})}) = \Phi^{(h)}(x^{(\bs{k})})$ evaluated at the grid that can be substituted into \eqref{app_sw} by taking the collocation points in the grid with $N_{c} = n^{d}$. Drawing $N$ independent samples of $\mcW(x^{(\bs{k})})$ yields random test functions $\varphi_{1},\dots,\varphi_{N}$ that can be evaluated at the grid and used to compute \eqref{app_sw}.
	
	Examples of realisations of $\Phi$ for $d = 1,2,3$ are presented in Figure \ref{fig_ex} \textbf{(a)}. Observe that, as expected (cf. Lemma \ref{lemma_traj}), the roughness of the trajectories increases visually with the dimension.
	
	\begin{figure}[htbp]
		\centering
		\captionsetup{justification=centering}
		\begin{subfigure}{\linewidth}
			\centering
			\includegraphics[width=\linewidth]{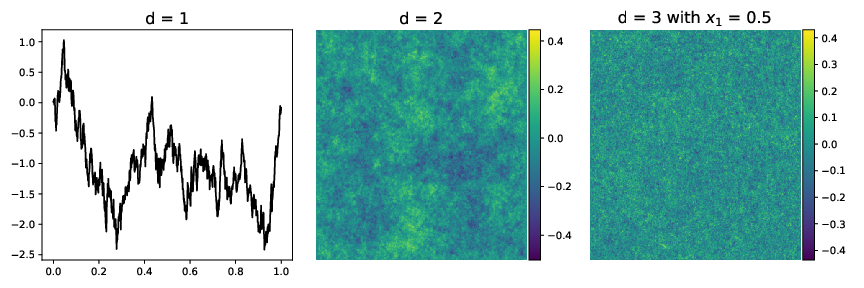}
			\caption{Hypercube domain $(0,1)^{d}$}
		\end{subfigure}
		\vspace{0.5cm} 		
		\begin{subfigure}{0.48\linewidth}
			\centering
			\includegraphics[width=0.75\linewidth]{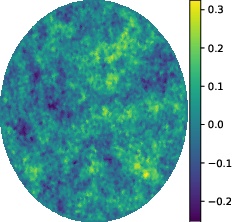}
			\caption{Radius 1 circle}
		\end{subfigure}
		\hfill
		\begin{subfigure}{0.48\linewidth}
			\centering
			\includegraphics[width=0.75\linewidth]{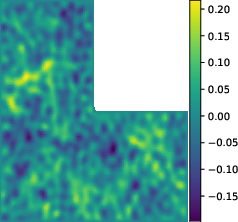}
			\caption{L-shaped domain $(-1,1)^2\setminus (0,1)^2$}
		\end{subfigure}
		
		\caption{Examples of realisations of $\Phi$. \textbf{(a)} Sampled by \eqref{DST_sol} with $\tau = 10$ for $d = 1,2,3$ in a grid with, respectively, $1024$, $1024^2$ and $512^3$ points. For $d = 3$ we present the slice $\varphi(0.5,\cdot,\cdot)$ of the sampled realisation. \textbf{(b)} Sampled via the truncated sum \eqref{Phi_circle} with $M = 4,096$, $\tau = 0.1$ and $22,500$ points sampled uniformly. \textbf{(c)} Sampled by truncating the sum \eqref{Phi_L} with $M = 1,024$, $\tau = 0.1$ and $7,105$ points in a grid.} 
		\label{fig_ex}
	\end{figure}
	
	\subsubsection{Circular domains in 2D}
	
	In the circular domain $\Omega = \{x \in \mbR^2: x_{1}^{2} + x_{2}^{2} < R^{2}\}$ for $R > 0$, the eigenvalues of the Dirichlet Laplacian (cf. \eqref{DLoper}) are
	\begin{align*}
		\lambda_{n,m} = \left(\frac{j_{n,m}}{R}\right)^2, \, \, n = 0, 1, 2, \cdots \, \, m = 1, 2, 3, \cdots
	\end{align*}
	in which $j_{n,m}$ is the $m$-th positive zero of the Bessel function of the first kind
	\begin{align*}
		J_{n}(r) = \sum_{l = 0}^{\infty} \frac{(-1)^{l}}{l! \, \Gamma(l + n + 1)} \left(\frac{r}{2}\right)^{2l + n}
	\end{align*}
	and the eigenfunctions associated with them are
	\begin{equation}
		\label{eigen_circle}
		\begin{aligned}
			\phi_{n,m,0}(r,\alpha) &= C_{n,m,0} \, J_{n}\left(\sqrt{\lambda_{n,m}} r\right) \cos(n\alpha) \text{ and } \\
			\phi_{n,m,1}(r,\alpha) &= C_{n,m,1} \, J_{n}\left(\sqrt{\lambda_{n,m}} r\right) \sin(n\alpha)
		\end{aligned}
	\end{equation}	
	for constants $C_{n,m,0},C_{n,m,1}$ that make the eigenfunctions an orthonormal basis of $L^{2}(\Omega)$. We note that $\phi_{n,m,\ell}(R,\alpha) = 0$ since $j_{n,m}$ is a root of $J_{n}$ so the Dirichlet boundary condition is indeed satisfied. We refer to the classical \cite[Section~5.5]{courant2024methods} for more details.
	
	Although algorithms somewhat analogous to the DST-I exist, for example, those based on the discrete Hankel transform \cite{johnson1987improved}, which could in principle be applied to solve \eqref{SPDE} on a grid, we instead sample the test functions from the truncated process
	\begin{align}
		\label{Phi_circle}
		\Phi^{(M)}(\xi,\cdot) = \tau \sum_{n = 0}^{M - 1} \sum_{m = 1}^{M} \sum_{\ell = 0,1} (1 + \lambda_{n,m})^{-1/2} w_{n,m,\ell}(\xi) \phi_{n,m,\ell}(\cdot)
	\end{align}
	for $M > 0$, in which $w_{n,m,\ell}$ are independent random variables with standard Gaussian distribution. Instead of considering a grid of the circle, we will sample the collocation points $x_{c}^{(i)}$ with uniform distribution in $\Omega$, so the inner sum in \eqref{app_sw} represents a Monte Carlo integration approximation of $R_{u_{\theta}}(\varphi_{j})$ (cf. \eqref{res_L2}). An example of a realisation of $\Phi^{(M)}$ is presented in Figure \ref{fig_ex} \textbf{(b)}.
	
	\subsubsection{L-shaped domain in 2D}
	\label{SecL}
	
	In the L-shaped domain $\Omega = (-1,1)^{2}\setminus (0,1)^{2}$, the eigenfunctions of the Dirichlet Laplacian do not have an explicit formula and should be approximated numerically. For this, we can apply the Finite Element Method (FEM) to approximate the eigenvalues and the eigenfunctions in a grid of $\Omega$.  We refer to \cite[Chapter~6]{strang1974analysis} for more details.
	
	For a grid of $\Omega$ with collocation points $x_{c}^{(i)}, i = 1,\dots,N_{c}$,  we sample test functions at these points from the truncated process
	\begin{align}
		\label{Phi_L}
		\hat{\Phi}^{(M)}(\xi,x^{(i)}_{c}) = \sum_{k = 1}^{M} (1 + \hat{\lambda}_{k})^{-1/2} w_{k}(\xi) \hat{\phi}_{k}(x^{(i)}_{c})
	\end{align} 
	in which $\hat{\lambda}_{k}$ and $\hat{\phi}_{k}(x^{(i)}_{c})$ are, respectively, approximations of the $M$ smallest eigenvalues and corresponding eigenfunctions of $-\Delta$ on the grid, and $w_{k}$ are independent random variables with standard Gaussian distribution. Since the collocation points are in a grid, the inner sum in \eqref{app_sw} represents a quadrature rule to approximate the integral $R_{u_{\theta}}(\varphi_{j})$ (cf. \eqref{res_L2}). We refer to Appendix \ref{app_details} for more detail about the FEM implementation. An example of a realisation of $\hat{\Phi}^{(M)}$ is presented in Figure \ref{fig_ex} \textbf{(c)}.
		
	\subsection{Optimisation algorithms}
	\label{Sec33}
	
	In this paper, we will consider two algorithms to train SV-PINNs by minimising the loss \eqref{loss_SVPINNs}: gradient descent and \lbs \cite{nocedal1980updating}. The former, through the ADAM \cite{kingma2014adam} implementation, can be easily applied by computing $Lu_{\theta}$ with automatic differentiation \cite{baydin2018automatic}. The latter has had great success on training neural networks by minimising variational loss functions, such as the Deep Ritz framework \cite{yu2025natural}, when the objective function is smoother than the $L^2$ norm of the residuals, and might be especially suitable for SV-PINNs. Although \lbs takes longer per step, it may converge faster and could be preferable for SV-PINNs, as will be illustrated in the experiments in Section \ref{Sec_exp}.
	
	We note that both algorithms can converge to sub-optimal local minima of \eqref{loss_SVPINNs} and there is no theoretical guarantee that the local minimum has a loss close to that of the global minimum. Nevertheless, as is also the case with PINNs, we can hope for this to be the case in practice.
	
	\subsection{The pragmatic view}
	
	The empirical weak loss \eqref{app_sw} can be rewritten as
	\begin{equation}
		\label{loosW}
		\begin{aligned}
			\mcL_{\Phi^{(n)}}(\theta) = &\frac{1}{N N_{c}^{2}} \Bigg[\sum_{i = 1}^{N_c} \lambda_{c}^{(i)} (Lu_{\theta}(x_{c}^{(i)}) - f(x_{c}^{(i)}))^{2} + \\
			&\sum_{\substack{i,i^\prime = 1 \\ i \neq i^\prime}}^{N_{c}} \lambda_{c}^{(i,i^\prime)} (Lu_{\theta}(x_{c}^{(i)}) - f(x_{c}^{(i)}))(Lu_{\theta}(x_{c}^{(i^\prime)}) - f(x_{c}^{(i^\prime)}))\Bigg]
		\end{aligned}
	\end{equation}
	in which
	\begin{align*}
		\lambda_{c}^{(i)} = \sum_{j = 1}^{N} |\varphi_{j}(x_{c}^{(i)})|^2 & & \lambda_{c}^{(i,i^\prime)} = \sum_{j = 1}^{N} \varphi_{j}(x_{c}^{(i)})\varphi_{j}(x_{c}^{(i^\prime)})
	\end{align*}
	for a fixed realisation of the sampled test functions $\varphi_{j}$, which are held fixed during training. Therefore, the SV-PINNs can be interpreted as PINNs with a loss function given by pointwise weights, akin to, for example, the self-adaptive PINNs \cite{mcclenny2023self}, but with weights $\lambda_{c}^{(i)}$ that are random and non-trainable and with an extra loss given by cross-term residuals with weights $\lambda_{c}^{(i,i^\prime)}$.
	
	Notwithstanding the mathematical reasoning that led to the SV-PINNs via the stochastically weak solutions, SV-PINNs can be pragmatically understood as PINNs with a particular weighting scheme. Their main differences with other well-established methods, such as self-adaptive \cite{mcclenny2023self}, gradient normalisation \cite{wang2021understanding}, neural tangent kernel-based weights \cite{wang2022and} and causal weights \cite{wang2022respecting}, are the cross-term residuals and the fact that the weights are random and non-trainable. To the best of our knowledge, weights are either fixed deterministically or are trainable, evolving deterministically with the training steps\footnote{Of course this evolution is random when stochastic GD is applied, but, conditioned on the randomness of the training process, the evolution of the weights is deterministic.}. We note that the variational PINNs \cite{kharazmi2019variational,kharazmi2021hp} have a loss function analogous to \eqref{loosW}, but with deterministic weights given by the fixed test functions.
	
	Another pragmatic view is to interpret the SV-PINNs as training the neural network enforcing that the vector of residuals $(R_{u_{\theta}}(x_{c}^{(i)}): i = 1,\dots,N_{c}) \in \mbR^{N_{c}}$ evaluated at the collocation points is orthogonal to the space
	\begin{align*}
		\hat{V}_{N}^{N_c} = \text{span}\{\varphi_{1}(\bs{x}),\dots,\varphi_{N}(\bs{x})\} \subset \mbR^{N_{c}},
	\end{align*} 
	in which $\varphi_{j}(\bs{x}) = (\varphi_{j}(x_{c}^{(i)}): i = 1,\dots,N_{c})$ denotes the vector of pointwise evaluations of the test function $\varphi_{j}$ at the collocation points interpreted as vectors in $\mbR^{N_c}$ endowed with the Euclidean norm. In view of the randomness of the test functions, it is expected that\footnote{This fact needs to be formally proved, but this proof is outside the scope of this paper.} $\text{dim}(\hat{V}_{N}^{N_c}) = N_{c}$ if $N$ is \textit{large enough}, so $(\hat{V}_{N}^{N_c})^{\perp} = \{0\}$ and the residual is actually being minimised pointwise. 
	
	This heuristic implies that, if the neural network can be properly trained, the pointwise residuals at the collocation points should be close to zero even by training with a weak formulation instead of minimising the strong empirical $L^2(\Omega)$ norm. On the other hand, if $N \ll N_{c}$, then $(\hat{V}_{N}^{N_c})^{\perp}$ might be \textit{too large} and the pointwise residuals will not be properly minimised. This is something that can be clearly observed in experiments. Future studies should rigorously prove (or disprove) these heuristics.
	
	While these interpretations are useful for intuition and comparison with other methods in the literature, the defining feature of SV-PINNs remains their variational formulation as the minimisation of a stochastically weak residual norm equivalent to the $H^{-1}(\Omega)$ norm, which underpins their mathematical basis.

	\section{Empirical approximation of the $\Phi$-norm}
	\label{Sec_error}
	
	In this section, we will analyse the approximation error incurred by replacing $\lVert R_{u_{\theta}} \rVert_{\Phi}^{2}$ with the empirical version $\mc{L}_{\Phi^{(n)}}(\theta)$ and prove that, for $\theta$ fixed, the error converges to zero in expectation over the samples of the test functions as the grid size and the number of sampled test functions go to infinity. 
	
	We assume from now on that $\Omega = (0,1)^{d}$ with $d \leq 3$, that $f \in H^{2}(\Omega)$, and that the coefficients of $L$ are regular enough so that $Lu_{\theta} \in H^2(\Omega)$. We recall that $h = 1/(n+1)$.
	
	\begin{theorem}
		\label{main_theorem}
		Let $\Omega = (0,1)^{d}$ with $d \leq 3$ and $f \in H^{2}(\Omega)$, and assume that $Lu_{\theta} \in H^2(\Omega)$ for all $\theta$. Then, 
		\begin{align}
			\label{main1}
			\mu\left[\left|(nh)^{2d} \mcL_{\Phi^{(n)}}(\theta) - \lVert R_{u_{\theta}} \rVert_{\Phi}^{2}\right|\right] \leq C \, \left[h^{2 - d/2} + N^{-1/2}\right] \, \lVert R_{u_{\theta}} \rVert_{H^{2}}^{2}.
		\end{align}
		In particular, $\lim\limits_{h \to 0} \lim\limits_{N \to \infty} \mu\left[\left|(nh)^{2d} \mcL_{\Phi^{(n)}}(\theta) - \lVert R_{u_{\theta}} \rVert_{\Phi}^{2}\right|\right] = 0$ for all $\theta$.
	\end{theorem}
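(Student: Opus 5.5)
I will split the error with the triangle inequality into a sampling (Monte Carlo) part and a discretisation part. Set $r = Lu_{\theta} - f \in H^{2}(\Omega)$. By \eqref{DST_sol} and \eqref{DST}, each sampled test function on the grid is
\[
\varphi_{j}(x^{(\bs{k})}) = \tau \sum_{|\bs{m}| \leq n} (1+\lambda_{\bs{m}}^{(h)})^{-1/2}\, w_{j,\bs{m}}\, \phi_{\bs{m}}(x^{(\bs{k})}),
\]
with $w_{j,\bs{m}}$ i.i.d.\ standard Gaussians; the $h^{-d/2}$ factor cancels the DST normalisation, and orthonormality of $S$ keeps the transformed noise white. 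Hence the inner quadrature in \eqref{app_sw} is a linear form $Y_{j} = \sum_{\bs{m}} \tau (1+\lambda_{\bs{m}}^{(h)})^{-1/2} w_{j,\bs{m}} \hat{a}_{\bs{m}}$, where
\[
\hat{a}_{\bs{m}} = \frac{1}{N_{c}} \sum_{\bs{k}} r(x^{(\bs{k})})\, \phi_{\bs{m}}(x^{(\bs{k})})
\]
is a rectangle-rule approximation of $a_{\bs{m}} = R_{u_\theta}(\phi_{\bs{m}})$, up to the factor relating $1/N_c = n^{-d}$ to the cell volume $h^{d}$. This mismatch is exactly what $(nh)^{2d}$ corrects, since $(nh)^{2d}\hat a_{\bs m}^2 = (h^d\sum_{\bs k} r\phi_{\bs m})^2$. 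Define the discrete quantity
\[
Q_{h} \coloneqq \mu\bigl[(nh)^{2d} Y_{j}^{2}\bigr] = \tau^{2}\sum_{|\bs{m}|\leq n} (1+\lambda_{\bs{m}}^{(h)})^{-1}\, \tilde a_{\bs{m}}^{2},
\qquad \tilde a_{\bs m} = h^{d}\sum_{\bs k} r(x^{(\bs k)})\phi_{\bs m}(x^{(\bs k)}).
\]
I will then bound $\mu|(nh)^{2d}\mcL_{\Phi^{(n)}} - Q_h|$ and $|Q_h - \lVert R_{u_\theta}\rVert_\Phi^2|$ separately.

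\textbf{Sampling term.} The variables $(nh)^{2d}Y_j^2$ are i.i.d.\ with mean $Q_h$. Cauchy–Schwarz followed by Gaussian hypercontractivity ($\mu[Y^4] = 3\mu[Y^2]^2$ for a centred Gaussian) gives
\[
\mu\bigl|(nh)^{2d}\mcL_{\Phi^{(n)}} - Q_h\bigr| \leq \sqrt{2/N}\, Q_h .
\]
It remains to bound $Q_h \leq C\lVert r\rVert_{H^2}^2$. Since $(1+\lambda^{(h)}_{\bs m})^{-1}\leq 1$, Parseval for the orthonormal DST gives
\[
Q_h \le \tau^2 h^{d}\sum_{\bs k} r(x^{(\bs k)})^2 .
\]
This is a discrete $L^2$ norm, controlled by $C\lVert r\rVert_{H^2}^2$ via the embedding $H^2\hookrightarrow C(\bar\Omega)$, valid for $d\le 3$. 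This is the first place $d\leq 3$ enters.

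\textbf{Discretisation term.} I will compare
\[
Q_h = \tau^2\sum_{|\bs m|\le n}(1+\lambda^{(h)}_{\bs m})^{-1}\tilde a_{\bs m}^2
\quad\text{with}\quad
\tau^2\sum_{\bs m}(1+\lambda_{\bs m})^{-1}a_{\bs m}^2,
\]
in three pieces.
\begin{enumerate}
\item The tail $|\bs m|>n$: integrating by parts twice gives $|a_{\bs m}|\lesssim \lambda_{\bs m}^{-1}\lVert r\rVert_{H^2}$ modulo boundary traces. Rather than track these, I use the cruder bound $\sum_{|\bs m|>n}\lambda_{\bs m}^{-1}a_{\bs m}^2 \le \lambda_{n}^{-1}\lVert r\rVert_{L^2}^2 \lesssim h^2\lVert r\rVert^2_{L^2}$.
\item Eigenvalue mismatch: $|(1+\lambda^{(h)})^{-1}-(1+\lambda)^{-1}| \lesssim h^2\lambda^2/(1+\lambda)^2 \lesssim h^2$, summed against $\sum\tilde a^2\lesssim\lVert r\rVert_{H^2}^2$.
\item Quadrature error $\tilde a_{\bs m}-a_{\bs m}$: the rectangle rule applied to $r\phi_{\bs m}$, whose second derivatives grow like $\lambda_{\bs m}$. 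By aliasing analysis of the DST (discrete sine coefficients equal sums of continuous coefficients at aliased frequencies $2(n+1)\ell\pm \bs m$), I get $|\tilde a_{\bs m}-a_{\bs m}|\le\sum_{\ell\ne0}|a_{\text{alias}}|$. With $|a_{\bs m'}|\lesssim|\bs m'|^{-2}$-type decay in $H^2$, summing over $\lesssim n^d$ modes with weight $(1+\lambda_{\bs m})^{-1}$ yields a bound of order $h^{2-d/2}\lVert r\rVert_{H^2}^2$.
\end{enumerate}

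\textbf{Main obstacle.} Step 3 is where I expect the work to lie. The loss of $d/2$ comes from counting $n^d$ modes against the pointwise $h^2$ quadrature accuracy, and boundary terms must be handled because $r$ need not vanish on $\partial\Omega$. I would first try a direct Sobolev-weighted aliasing sum, estimating $\sum_{\bs m}\bigl|\sum_{\ell\neq0}a_{\text{alias}}\bigr|^2$ via Cauchy–Schwarz with weights $|\bs m'|^{4}$, which is finite exactly when $r\in H^2$ and $d\le3$. If boundary traces spoil the decay, I would fall back on the cruder pointwise estimate $|\tilde a_{\bs m}-a_{\bs m}|\lesssim h^2(1+\lambda_{\bs m})\lVert r\rVert_{C^{1}}$, combined with the embedding $H^2\hookrightarrow C^{0,1/2}$ (valid for $d\le3$) to absorb derivative losses into the $h^{-d/2}$ factor.

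Adding the sampling and discretisation bounds gives \eqref{main1}, with $\lVert r\rVert_{H^2}$ interpreted as the norm of the $L^2$ representative of $R_{u_\theta}$. The limit statement follows because $h^{2-d/2}\to0$ for $d\le3$.
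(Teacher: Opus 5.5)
Your overall architecture matches the paper's. The split into a sampling term and a discretisation term, the sampling bound via the Gaussian fourth moment, discrete Parseval and $H^{2}(\Omega)\hookrightarrow C^{0}(\bar\Omega)$, and the three-way split into tail, eigenvalue mismatch and quadrature error all follow the proof of Theorem \ref{main_theorem} and Lemma \ref{L1}. The gap is in Step 3, which carries the whole $h^{2-d/2}$ rate.

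Your primary route bounds $|\tilde a_{\bs{m}}-a_{\bs{m}}|$ by $\sum_{\ell\neq 0}|a_{\mathrm{alias}}|$ and controls this through $\sum_{\bs{m}'}|\bs{m}'|^{4}a_{\bs{m}'}^{2}$. That quantity is the spectral $\dot H^{2}$ norm, which is finite only when $r\in H^{2}(\Omega)\cap H_{0}^{1}(\Omega)$. As you note yourself, $r=Lu_{\theta}-f$ need not vanish on $\partial\Omega$. Its sine coefficients then decay only like $|\bs{m}'|^{-1}$ in the normal direction; in one dimension, $a_{m}=\sqrt{2}\,(r(0)-(-1)^{m}r(1))/(m\pi)+O(m^{-2})$. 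So the claimed $|\bs{m}'|^{-2}$ decay is false, $\sum_{\bs{m}'}|\bs{m}'|^{4}a_{\bs{m}'}^{2}=\infty$, and $\sum_{\ell\neq0}|a_{\mathrm{alias}}|$ is a divergent harmonic-type series.

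Your fallback does not repair this, for three reasons. First, $H^{2}(\Omega)$ does not embed into $C^{1}(\bar\Omega)$ for $d=2,3$, so $\lVert r\rVert_{C^{1}}$ is not controlled by $\lVert r\rVert_{H^{2}}$. Second, first-derivative control cannot give an $O(h^{2})$ quadrature error anyway. Third, the factor $h^{-d/2}$ is already spent counting the $n^{d}$ modes, so it cannot also absorb a derivative loss.

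The missing idea is to look at the integrand rather than at $r$. The function $g=r\phi_{\bs{m}}$ vanishes on $\partial\Omega$ because $\phi_{\bs{m}}$ does. Hence the interior-node sum $h^{d}\sum_{\bs{k}}g(x^{(\bs{k})})$ coincides with the composite trapezoid rule; this, not a rectangle rule, is why it is second order. A Bramble--Hilbert argument on each cell of side $h$ plus scaling (Proposition \ref{prop_trapezoid}, Corollary \ref{corollary_norms}) gives $|\langle r,\phi_{\bs{m}}\rangle_{h}-\langle r,\phi_{\bs{m}}\rangle|\leq C h^{2}\lVert r\phi_{\bs{m}}\rVert_{H^{2}}\leq C h^{2}\lambda_{\bs{m}}\lVert r\rVert_{H^{2}}$. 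The last step uses either the product rule with $\lVert\partial^{\alpha}\phi_{\bs{m}}\rVert_{L^{\infty}}\leq C\lambda_{\bs{m}}^{|\alpha|/2}$ or the algebra property of $H^{2}$ for $d\leq 3$. No decay of the coefficients of $r$ is needed.

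Write $\tilde a_{\bs{m}}^{2}-a_{\bs{m}}^{2}=(\tilde a_{\bs{m}}-a_{\bs{m}})(\tilde a_{\bs{m}}+a_{\bs{m}})$ and apply Cauchy--Schwarz over $|\bs{m}|\leq n$. This gives $\sum_{|\bs{m}|\leq n}(1+\lambda_{\bs{m}})^{-1}|\tilde a_{\bs{m}}^{2}-a_{\bs{m}}^{2}|\leq(\lVert a\rVert_{\ell^{2}}+\lVert\tilde a\rVert_{\ell^{2}})\,C h^{2}n^{d/2}\lVert r\rVert_{H^{2}}\leq C h^{2-d/2}\lVert r\rVert_{H^{2}}^{2}$. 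This is exactly where the $d/2$ loss comes from. With this in place of your Step 3, the rest of your argument goes through.
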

	
	Before proving Theorem \ref{main_theorem}, we will deduce an expression for $\mu\left[\mcL_{\Phi^{(n)}}(\theta)\right]$ in terms of the discretised inner product
	\begin{align}
		\label{discrete_IN}
		\langle u,v \rangle_{h} \coloneqq h^{d} \sum_{|\bs{k}| \leq n} u(x^{(\bs{k})}) \ v(x^{(\bs{k})}), \ \ \ u,v \in H^2(\Omega)
	\end{align}
	with the associated discrete seminorm $\lVert u \rVert_{h} = (\langle u,u \rangle_h)^{1/2}$. For this, let
	\begin{align}
		\label{t2}
		\Phi^{(h)}(\xi,\cdot) = \tau \, \sum_{|\bs{k}| \leq n} (1 + \lambda_{\bs{k}}^{(h)})^{-1/2} \, w_{\bs{k}}(\xi) \, \phi_{\bs{k}}(\cdot)
	\end{align}
	where $w_{\bs{k}}$ are mean zero independent Gaussian random variables with unit variance. This expression satisfies \eqref{DST_sol} for all points $x^{(\bs{k})}$ in the grid. Indeed, $\widehat{\mcW}(x^{(\bs{k})})$ are mean zero independent Gaussian random variables with unit variance since the DST-I is an orthogonal transformation. Applying the DST-I formula \eqref{DST} to $\tau (1 + \lambda_{\bs{k}}^{(h )})^{-1/2}\widehat{\mcW}(x^{(\bs{k})})$ we obtain \eqref{t2} multiplied by $h^{d/2}$, which cancels with the $h^{-d/2}$ factor of \eqref{DST_sol}.
	
	Therefore, denoting $R_{u_{\theta}}(x^{(\bs{k})}) \coloneqq (Lu_{\theta} - f)(x^{(\bs{k})})$,
	\begin{align}
		\label{inExp} \nonumber
		\mu\left[\mcL_{\Phi^{(n)}}(\theta)\right] &= \mu\left[\left(\frac{1}{n^{d}} \sum_{|\bs{k}| \leq n} R_{u_{\theta}}(x^{(\bs{k})}) \tau \sum_{|\bs{k}^\prime| \leq n} \frac{w_{\bs{k}^\prime} \phi_{\bs{k}^{\prime}}(x^{(\bs{k})})}{(1 + \lambda^{(h)}_{\bs{k}^{\prime}})^{1/2}}\right)^2\right]\\ \nonumber
		&= \frac{\tau^2 h^{-2d}}{n^{2d}} \mu\left[\left(\sum_{|\bs{k}^\prime| \leq n} (1 + \lambda^{(h)}_{\bs{k}^{\prime}})^{-1/2} w_{\bs{k}^\prime} \sum_{|\bs{k}| \leq n} h^d R_{u_{\theta}}(x^{(\bs{k})}) \phi_{\bs{k}^{\prime}}(x^{(\bs{k})})\right)^2\right]\\ \nonumber
		&= \frac{\tau^2 h^{-2d}}{n^{2d}} \mu\left[\left(\sum_{|\bs{k}^\prime| \leq n} (1 + \lambda^{(h)}_{\bs{k}^{\prime}})^{-1/2} w_{\bs{k}^\prime} \langle R_{u_{\theta}},\phi_{\bs{k}^{\prime}} \rangle_{h}\right)^2\right]\\
		&= \frac{\tau^2 h^{-2d}}{n^{2d}} \sum_{|\bs{k}^\prime| \leq n} \frac{|\langle R_{u_{\theta}},\phi_{\bs{k}^\prime} \rangle_{h}|^2}{1 + \lambda_{\bs{k}^\prime}^{(h)}}
	\end{align}
	in which the last equality follows since $w_{\bs{k}}$ are independent with mean zero and unit variance. We first bound $\left|(nh)^{2d}\mu[\mcL_{\Phi^{(n)}}(\theta)] - \lVert R_{u_{\theta}} \rVert_{\Phi}^{2}\right|$.
	
	\begin{lemma}
		\label{L1}
		Under the assumptions of Theorem \ref{main_theorem},
		\begin{align}
			\label{res_L1}
			\left|(nh)^{2d}\mu[\mcL_{\Phi^{(n)}}(\theta)] - \lVert R_{u_{\theta}} \rVert_{\Phi}^{2}\right| \leq C \, h^{2 - d/2} \, \lVert R_{u_{\theta}} \rVert_{H^{2}}^{2}
		\end{align}
		for all $\theta$. In particular, the right-hand side converges to zero as $h \to 0$ since $d \leq 3$.
	\end{lemma}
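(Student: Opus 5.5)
By \eqref{inExp}, $(nh)^{2d}\mu[\mcL_{\Phi^{(n)}}(\theta)] = \tau^{2}\sum_{|\bs{k}|\leq n} (1+\lambda^{(h)}_{\bs{k}})^{-1}|\langle R,\phi_{\bs{k}}\rangle_h|^2$, writing $R = R_{u_\theta}$ and noting that $(nh)^{2d}h^{-2d}n^{-2d}=1$. By \eqref{multi_tau}, $\lVert R\rVert_\Phi^2 = \tau^2\sum_{\bs{k}}(1+\lambda_{\bs{k}})^{-1}|\langle R,\phi_{\bs{k}}\rangle|^2$. I will split the difference into three pieces:
\begin{align*}
\text{(I)} &= \tau^2\sum_{|\bs{k}|\le n}\Bigl[(1+\lambda^{(h)}_{\bs{k}})^{-1}-(1+\lambda_{\bs{k}})^{-1}\Bigr]|\langle R,\phi_{\bs{k}}\rangle_h|^2,\\
\text{(II)} &= \tau^2\sum_{|\bs{k}|\le n}(1+\lambda_{\bs{k}})^{-1}\Bigl[|\langle R,\phi_{\bs{k}}\rangle_h|^2-|\langle R,\phi_{\bs{k}}\rangle|^2\Bigr],\\
\text{(III)} &= -\tau^2\sum_{|\bs{k}|> n}(1+\lambda_{\bs{k}})^{-1}|\langle R,\phi_{\bs{k}}\rangle|^2 .
\end{align*}

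For (III), I expect two integrations by parts against the sine basis to give $\sum_{\bs{k}}\lambda_{\bs{k}}^2|\langle R,\phi_{\bs{k}}\rangle|^2\le C\lVert R\rVert_{H^2}^2$. The obstacle is that $R$ does not vanish on $\partial\Omega$. My plan is instead to use the weaker decay $|\langle R,\phi_{\bs{k}}\rangle|\lesssim \lVert R\rVert_{H^2}\prod_j k_j^{-1}$, which is what one gets for sine coefficients of a non-periodic $H^2$ function (one boundary term survives). Summing the weight $(1+\lambda_{\bs{k}})^{-1}$ over $|\bs{k}|>n$ then gives a tail of order $n^{-2}$ or better, which is at most $h^{2-d/2}$.

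For (I), I will use $0\le\lambda_{\bs{k}}-\lambda^{(h)}_{\bs{k}}\le C h^2\lambda_{\bs{k}}^2$ together with $\lambda^{(h)}_{\bs{k}}\ge c\lambda_{\bs{k}}$ for $|\bs{k}|\le n$ (since $\sin x\ge 2x/\pi$ on $[0,\pi/2]$). This bounds the bracket by $Ch^2$ uniformly in $\bs{k}$, so (I) is at most $Ch^2\sum_{|\bs{k}|\le n}|\langle R,\phi_{\bs{k}}\rangle_h|^2$. Because $\{\phi_{\bs{k}}\}_{|\bs{k}|\le n}$ is orthonormal for $\langle\cdot,\cdot\rangle_h$ (this is the orthogonality of the DST-I), the sum equals $\lVert R\rVert_h^2$. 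By Sobolev embedding $H^2\hookrightarrow C(\bar\Omega)$ for $d\le3$, $\lVert R\rVert_h\le\lVert R\rVert_{L^\infty}\le C\lVert R\rVert_{H^2}$, so (I) is bounded by $Ch^2\lVert R\rVert_{H^2}^2$.

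Term (II) is the main obstacle. Writing $e_{\bs{k}} = \langle R,\phi_{\bs{k}}\rangle_h-\langle R,\phi_{\bs{k}}\rangle$, I bound
\[
|\text{(II)}| \le \sum_{|\bs{k}|\le n} (1+\lambda_{\bs{k}})^{-1}|e_{\bs{k}}|\bigl(|e_{\bs{k}}|+2|\langle R,\phi_{\bs{k}}\rangle|\bigr).
\]
The quantity $e_{\bs{k}}$ is the error of a rectangle/trapezoid-type rule (the integrand vanishes at the boundary nodes) applied to $R\phi_{\bs{k}}$. The standard estimate gives $|e_{\bs{k}}|\le C h^2\lVert R\phi_{\bs{k}}\rVert_{H^2}\le Ch^2(1+\lambda_{\bs{k}})\lVert R\rVert_{H^2}$; for $d\le 3$ I will use the Bramble--Hilbert lemma on each cell, with pointwise evaluation justified by $H^2\subset C$. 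With this bound the weight $(1+\lambda_{\bs{k}})^{-1}$ cancels, leaving $Ch^2\lVert R\rVert_{H^2}\sum_{|\bs{k}|\le n}\bigl(|e_{\bs{k}}|+|\langle R,\phi_{\bs{k}}\rangle|\bigr)$. I will control this sum by Cauchy--Schwarz with $n^{d/2}$ terms: Parseval gives $\sum|\langle R,\phi_{\bs{k}}\rangle|^2\le\lVert R\rVert_{L^2}^2$, and discrete Parseval gives $\sum|\langle R,\phi_{\bs{k}}\rangle_h|^2=\lVert R\rVert_h^2$. This yields $Ch^2 n^{d/2}\lVert R\rVert_{H^2}^2 = Ch^{2-d/2}\lVert R\rVert_{H^2}^2$, which explains the exponent in \eqref{res_L1}.

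If the Bramble--Hilbert step is delicate because of boundary behaviour of $R$, my fallback is a direct aliasing computation. For $|\bs{k}|\le n$, $\langle R,\phi_{\bs{k}}\rangle_h$ equals an alternating sum of the coefficients $\langle R,\phi_{\bs{m}}\rangle$ over aliases $\bs{m}$ with $m_j\equiv\pm k_j \pmod{2(n+1)}$. I would then bound the alias tail using the coefficient decay from (III). Combining the three estimates gives \eqref{res_L1}.
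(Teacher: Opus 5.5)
Your proposal follows the paper's proof, but the tail step (III) relies on a coefficient-decay bound that is false in $d=3$; a trivial bound fixes it.

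\textbf{Agreement with the paper.} You use the same three-way split: eigenvalue perturbation, coefficient perturbation $a_h^2-a^2$, and the tail $|\bs{k}|>n$. The key estimate is also the same, namely $|\langle R,\phi_{\bs{k}}\rangle_h-\langle R,\phi_{\bs{k}}\rangle|\le Ch^2\lambda_{\bs{k}}\lVert R\rVert_{H^2}$, obtained from a cellwise Bramble--Hilbert trapezoid bound applied to $R\phi_{\bs{k}}$. Since $R\phi_{\bs{k}}$ vanishes on $\partial\Omega$, the boundary values of $R$ play no role, and your aliasing fallback is unnecessary. You also use, as the paper does:
\begin{itemize}
\item discrete Parseval together with $H^2\hookrightarrow C(\bar\Omega)$, giving $\lVert R\rVert_h\le C\lVert R\rVert_{H^2}$;
\item Cauchy--Schwarz over the $n^d$ retained modes, giving the factor $n^{d/2}$ and hence $h^2n^{d/2}\le h^{2-d/2}$.
\end{itemize}
Your grouping differs only cosmetically. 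You attach the eigenvalue error to $|\langle R,\phi_{\bs{k}}\rangle_h|^2$ and use discrete Parseval. The paper attaches it to $|\langle R,\phi_{\bs{k}}\rangle|^2$ and uses continuous Parseval.

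\textbf{The gap in (III).} The claimed decay $|\langle R,\phi_{\bs{k}}\rangle|\lesssim\lVert R\rVert_{H^2}\prod_j k_j^{-1}$ is false for $d=3$. Take $R=\phi_{(K,K,K)}$: its coefficient is $1$, while $\lVert R\rVert_{H^2}\prod_j k_j^{-1}\sim K^{2}\cdot K^{-3}\to 0$. Getting a factor $\prod_j k_j^{-1}$ would require control of the mixed third derivative $\partial_1\partial_2\partial_3R$, which $H^2$ does not give.

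\textbf{The fix.} No coefficient decay is needed. If $|\bs{k}|>n$, then some $k_j\ge n+1=h^{-1}$, so $\lambda_{\bs{k}}\ge\pi^2h^{-2}$ and $(1+\lambda_{\bs{k}})^{-1}\le h^2/\pi^2$. Parseval then gives $|\mathrm{(III)}|\le\tau^2h^2\lVert R\rVert_{L^2}^2/\pi^2$, which is exactly the paper's bound. With this replacement your argument is complete.
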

	\begin{proof}
		It follows from \eqref{inExp} and \eqref{multi_tau} that the left-hand side of \eqref{res_L1} is bounded from above by $\tau^2$ times
		\begin{align*}
			 \sum_{|\bs{k}| \leq n} \left|\frac{|\langle R_{u_{\theta}},\phi_{\bs{k}} \rangle_{h}|^2}{1 + \lambda_{\bs{k}}^{(h)}} - \frac{|\langle R_{u_{\theta}},\phi_{\bs{k}} \rangle|^2}{1 + \lambda_{\bs{k}}}\right| + \sum_{|\bs{k}| > n} \frac{|\langle R_{u_{\theta}},\phi_{\bs{k}} \rangle|^2}{1 + \lambda_{\bs{k}}}.
		\end{align*}
		The second sum is bounded by
		\begin{align*}
			C \, \lVert R_{u_{\theta}} \rVert_{L^{2}}^{2} \, h^{2} \leq C \, \lVert R_{u_{\theta}} \rVert_{H^{2}}^{2} \, h^{2}
		\end{align*}
		since $(1 + \lambda_{\bs{k}})^{-1} \leq C \, h^{2}$ for $|\bs{k}| > n$. It remains to bound the first sum.
		
		Denoting $a = a^{(\bs{k})} \coloneqq \langle R_{u_{\theta}},\phi_{\bs{k}} \rangle$ and $a_h = a_h^{(\bs{k})} \coloneqq \langle R_{u_{\theta}},\phi_{\bs{k}} \rangle_{h}$, each element in the first sum above can be written as
		\begin{align*}
			&\left|\frac{(1 + \lambda_{\bs{k}}) (a_h^2 - a^2) + a^2 (\lambda_{\bs{k}} - \lambda_{\bs{k}}^{(h)})}{(1 + \lambda_{\bs{k}}^{(h)})(1 + \lambda_{\bs{k}})}\right|\\
			&\leq C \, \frac{(1 + \lambda_{\bs{k}}) (|a| \, |a - a_{h}| + |a_h||a - a_{h}|) + a^2 |\lambda_{\bs{k}} - \lambda_{\bs{k}}^{(h)}|}{(1 + \lambda_{\bs{k}})^2}\\
			&= C \, \left[\frac{|a|}{(1 + \lambda_{\bs{k}})} |a_{h} - a| + \frac{|a_{h}|}{(1 + \lambda_{\bs{k}})} |a_{h} - a| + \frac{a^2}{(1 + \lambda_{\bs{k}})^2} |\lambda_{\bs{k}} - \lambda_{\bs{k}}^{(h)}|\right]
		\end{align*}
		where the first inequality follows from the fact that $c\lambda_{\bs{k}} \leq \lambda_{\bs{k}}^{(h)} \leq C \lambda_{\bs{k}}$. 
		
		Summing over $\bs{k}$ the expression above, the first term of the sum is bounded by
		\begin{align*}
			&\sum_{|\bs{k}| \leq n} \frac{|\langle R_{u_{\theta}},\phi_{\bs{k}} \rangle|}{(1 + \lambda_{\bs{k}})} |\langle R_{u_{\theta}},\phi_{\bs{k}} \rangle_{h} - \langle R_{u_{\theta}},\phi_{\bs{k}} \rangle| \\
			&\leq \left(\sum_{|\bs{k}| \leq n} |\langle R_{u_{\theta}},\phi_{\bs{k}} \rangle|^{2}\right)^{1/2}\left(\sum_{|\bs{k}| \leq n} \frac{|\langle R_{u_{\theta}},\phi_{\bs{k}} \rangle_{h} - \langle R_{u_{\theta}},\phi_{\bs{k}} \rangle|^{2}}{(1 + \lambda_{\bs{k}})^{2}}\right)^{1/2} \\
			&\leq C \, h^{2} \, \lVert R_{u_{\theta}} \rVert_{L^{2}} \, \lVert R_{u_{\theta}} \rVert_{H^{2}}  \left(\sum_{|\bs{k}| \leq n} \frac{\lambda_{\bs{k}}^{2}}{(1 + \lambda_{\bs{k}})^{2}}\right)^{1/2} \leq C \, h^{2 - d/2} \, \lVert R_{u_{\theta}} \rVert_{H^{2}}^{2}
		\end{align*}
		where the first inequality is due to Cauchy-Schwarz and the second to Corollary \ref{corollary_norms} and Parseval's identity. Analogously, the sum of the second term is bounded by
		\begin{align*}
			&\sum_{|\bs{k}| \leq n} \frac{|\langle R_{u_{\theta}},\phi_{\bs{k}} \rangle_{h}|}{(1 + \lambda_{\bs{k}})} |\langle R_{u_{\theta}},\phi_{\bs{k}} \rangle_{h} - \langle R_{u_{\theta}},\phi_{\bs{k}} \rangle| \\
			&\leq \left(\sum_{|\bs{k}| \leq n} |\langle R_{u_{\theta}},\phi_{\bs{k}} \rangle_{h}|^{2}\right)^{1/2}\left(\sum_{|\bs{k}| \leq n} \frac{|\langle R_{u_{\theta}},\phi_{\bs{k}} \rangle_{h} - \langle R_{u_{\theta}},\phi_{\bs{k}} \rangle|^{2}}{(1 + \lambda_{\bs{k}})^{2}}\right)^{1/2} \\
			&\leq C \, h^{2 - d/2} \, \lVert R_{u_{\theta}} \rVert_{H^{2}} \left(\sum_{|\bs{k}| \leq n} |\langle R_{u_{\theta}},\phi_{\bs{k}} \rangle_{h}|^{2}\right)^{1/2}
		\end{align*}
		in which again the first inequality is due to Cauchy-Schwarz and the second to Corollary \ref{corollary_norms}. Since $\{\phi_{\bs{k}}\}$ is orthonormal under $\langle \cdot,\cdot \rangle_{h}$, and by Sobolev embedding, for $d \leq 3$, $H^{2}(\Omega)$ can be continuously embedded into $C^{0}(\bar{\Omega})$, the space of continuous functions equipped with the $L^\infty$ norm, we have
		\begin{align}
			\label{a1}
			\sum_{|\bs{k}| \leq n} |\langle R_{u_{\theta}},\phi_{\bs{k}} \rangle_{h}|^{2} = \lVert R_{u_{\theta}} \rVert_{h}^{2} \leq \lVert R_{u_{\theta}} \rVert_{L^\infty}^{2} \leq C \lVert R_{u_{\theta}} \rVert_{H^{2}}^{2}
		\end{align}
		and we conclude that the sum of the second term is bounded by $C \, h^{2 - d/2} \, \lVert R_{u_{\theta}} \rVert_{H^{2}}^{2}$. Finally, applying Lemma \ref{lemma_eigenvalue}, the sum of the third term is
		\begin{align*}
			\sum_{|\bs{k}| \leq n} \frac{|\langle R_{u_{\theta}},\phi_{\bs{k}} \rangle|^{2}}{(1 + \lambda_{\bs{k}})^{2}} |\lambda_{\bs{k}} - \lambda_{\bs{k}}^{(h)}| \leq C \, h^2 \, \sum_{|\bs{k}| \leq n} \frac{|\langle R_{u_{\theta}},\phi_{\bs{k}} \rangle|^{2}}{(1 + \lambda_{\bs{k}})^{2}} \lambda_{\bs{k}}^{2} \leq C \, h^2 \, \lVert R_{u_{\theta}} \rVert_{H^2}^{2}
		\end{align*}
		and the result follows.
	\end{proof}
	
	We now prove Theorem \ref{main_theorem}.
	
	\begin{proof}[Proof of Theorem \ref{main_theorem}]
		By the triangle inequality and Lemma \ref{L1},
		\begin{align*}
			\left|(nh)^{2d} \mcL_{\Phi^{(n)}}(\theta) - \lVert R_{u_{\theta}} \rVert_{\Phi}^{2}\right| \leq (nh)^{2d} \left|\mcL_{\Phi^{(n)}}(\theta) - \mu[\mcL_{\Phi^{(n)}}(\theta)]\right| + C \, h^{2 - d/2} \, \lVert R_{u_{\theta}} \rVert_{H^{2}}^{2}.
		\end{align*}
		By Cauchy-Schwarz inequality,
		\begin{align*}
			\mu\left[\left|\mcL_{\Phi^{(n)}}(\theta) - \mu[\mcL_{\Phi^{(n)}}(\theta)]\right|\right] \leq \left(\text{Var}(\mcL_{\Phi^{(n)}}(\theta))\right)^{1/2}
		\end{align*}
		in which, by an analogous computation to \eqref{inExp},
		\begin{align*}
			\text{Var}(\mcL_{\Phi^{(n)}}(\theta)) &= \frac{\tau^{4} \, h^{-4d}}{N \, n^{4d}} \text{Var}\left(\left[\sum_{|\bs{k}| \leq n} (1 + \lambda^{(h)}_{\bs{k}})^{-1/2} w_{\bs{k}} \langle R_{u_{\theta}},\phi_{\bs{k}} \rangle_{h}\right]^2\right).
		\end{align*}
		Observe that
		\begin{align*}
			Z_{n} \coloneqq \sum_{|\bs{k}| \leq n} (1 + \lambda^{(h)}_{\bs{k}})^{-1/2} w_{\bs{k}} \langle R_{u_{\theta}},\phi_{\bs{k}} \rangle_{h}
		\end{align*}
		follows a Gaussian distribution with mean zero and
		\begin{align*}
			\text{Var}(Z_{n}) = \sum_{|\bs{k}| \leq n} \frac{|\langle R_{u_{\theta}},\phi_{\bs{k}} \rangle_{h}|^{2}}{(1 + \lambda^{(h)}_{\bs{k}})} \leq \sum_{|\bs{k}| \leq n} |\langle R_{u_{\theta}},\phi_{\bs{k}} \rangle_{h}|^{2}  \leq C \, \lVert R_{u_{\theta}} \rVert_{H^{2}}^{2}
		\end{align*}
		in which the last inequality follows by the same arguments as in \eqref{a1}. Now, it holds for mean-zero Gaussian random variables
		\begin{align*}
			\text{Var}(Z_{n}^{2}) = 2 \, (\text{Var}(Z_{n}))^2 \leq C \, \lVert R_{u_{\theta}} \rVert_{H^{2}}^{4} 
		\end{align*}
		therefore
		\begin{align*}
			(nh)^{2d} \mu\left[\left|\mcL_{\Phi^{(n)}}(\theta) - \mu[\mcL_{\Phi^{(n)}}(\theta)]\right|\right] \leq C \, N^{-1/2} \, \lVert R_{u_{\theta}} \rVert_{H^{2}}^{2} 
		\end{align*}
		and the proof is complete.
	\end{proof}
	
	\begin{remark}
		The correcting factor $(nh)^{2d}$ appears since we are considering the DST-I to sample the test functions. If the truncated process were computed directly, this term would disappear. 	Although we focused on the hypercube domain $(0,1)^d$, analogous results could be obtained for other domains as long as there is a control over the quadrature error.
	\end{remark}
	
	\begin{remark}
		We emphasise that the $h^{2 - d/2}$ term is the trade-off between the quadrature rule, that is the Trapezoid rule, which is second order, and the irregularity of the test functions, for which the $H^{2}(\Omega)$ norm diverges as $h \to 0$. In particular, if the mode of truncation $n$ of the random test function is not selected as $\sim h^{-1}$, then a bound that depends on both quantities can be obtained analogously. See Appendix \ref{appAux} for more details. For higher dimensions ($d > 3$), higher-order quadrature rules need to be considered to guarantee convergence.
	\end{remark}
	
	\subsection{Convergence of SV-PINNs}
	
	Let $u_{\theta_{h}}$ be a minimiser\footnote{We assume a minimiser exists without loss of generality, as otherwise we could consider $u_{\theta_{h}}$ as satisfying $\mcL_{\Phi^{(n)}}(\theta_{h}) \leq \inf_{\theta} \mcL_{\Phi^{(n)}}(\theta) + \delta$ for $\delta > 0$ small enough.} of \eqref{loss_SVPINNs} with $\lambda = 0$ represented by a modified MLP with DAFF encoding using all eigenfunctions $\phi_{\bs{k}}$ with $|\bs{k}| \leq M$, $K$ hidden layers with $r$ nodes, and zero biases. A natural notion of convergence for SV-PINNs would be
	\begin{align}
	 	\label{converge_SV}
	 	\lim\limits_{M \to \infty} \lim\limits_{r \to \infty} \lim\limits_{h \to 0} \lim_{N \to \infty} \mu\left[\lVert u_{\theta_{h}} - u \rVert_{H_{0}^{1}}\right] = 0
	\end{align}
	for some fixed $K$. An analogous convergence result holding in high probability would also be desirable.
	
	One possible route towards proving convergence of SV-PINNs would be to establish concentration inequalities for the $\Phi$-norm in high probability
	\begin{align*}
		\mu\left(\sup_{\theta} \left|(nh)^{2d} \mcL_{\Phi^{(n)}}(\theta) - \lVert R_{u_{\theta}} \rVert_{\Phi}^{2}\right| > \epsilon\right) \leq \delta_{1}(\eps,h,N,r,M,K,L)
	\end{align*}
	or in expectation
	\begin{align*}
		\mu\left[\sup_{\theta} \left|(nh)^{2d} \mcL_{\Phi^{(n)}}(\theta) - \lVert R_{u_{\theta}} \rVert_{\Phi}^{2}\right|\right] \leq \delta_{2}(h,N,r,M,K,L)
	\end{align*}
	in which $\delta_{1}$ and $\delta_{2}$ converge to zero when the limit in \eqref{converge_SV} is applied. Observe that Theorem \ref{main_theorem} provides bounds in expectation, and analogous results could be obtained in high probability, for a fixed $\theta$. However, Theorem \ref{main_theorem} cannot be easily adapted for the supremum over $\theta$ inside the expectation.
	
	This kind of bound is not straightforward in this case and might not hold in general. In fact, it depends on the complexity of the space of functions generated by the modified MLP and on the structure of the \textit{empirical loss} $\mcL_{\Phi^{(n)}}(\cdot)$. Substantial research has been devoted to obtaining bounds for this kind of concentration inequality when the empirical loss is the mean squared error, and even in that case many open problems remain. See \cite{mendelson2015learning} for a discussion about the shortcomings of concentration inequalities. 
	
	Nevertheless, it is well-known that the rate of convergence of concentration probabilities and expectations such as those above depends on the geometry of the space of functions $u_{\theta}$ induced by the empirical loss $\mcL_{\Phi^{(n)}}(\theta)$, which can be characterised by \textit{complexity measures}. We refer to \cite{bartlett2002rademacher,concentration} for more details in the context of empirical processes. In our particular case, since for a fixed $\theta$ the empirical loss is an average of squared Gaussian random variables, the analysis might be more straightforward, especially if the space of trial functions is replaced by something more regular, for instance a convex set of functions. Talagrand's generic chaining method \cite{talagrand1996majorizing,talagrand2005generic}, which is a powerful tool to control the supremum of Gaussian random variables, and by extension other empirical processes, could be especially suitable to obtain concentration inequalities in this case. We leave the study of concentration inequalities for the $\Phi$-norm for future research as it is outside the scope of this paper. In the particular case of convex trial spaces, concentration inequalities could be bypassed entirely and the error could be controlled with the small ball method of \cite{mendelson2015learning}.
	
	In the case	of the inhomogeneous problem \eqref{inh_problem}, in which \eqref{loss_SVPINNs} is minimised with $\lambda > 0$, other analytical tools can be used, such as complexity-dependent error bounds analogous to those in \cite{marcondes2025complexity}, without relying on concentration inequalities. We also leave this analysis for future research.	
	
	\section{Experiments in $(0,1)^d$}
	\label{Sec_exp}
	
	In this section, we perform experiments to illustrate SV-PINNs and compare their performance with PINNs. We consider $6$ challenging boundary value problems associated with second-order linear elliptic operators in the hypercube domain $\Omega = (0,1)^d$: the Poisson equation with multi-scale high-frequency solutions in 1D (Section \ref{Sec_exp1}) and 2D (Section \ref{Sec_exp2}), the Poisson equation with variable high-frequency coefficients and high-frequency solution (Section \ref{Sec_exp3}), the Helmholtz equation with high-frequency solution in 2D (Section \ref{Sec_exp4}) and 3D (Section \ref{Sec_exp6}), and the inhomogeneous Poisson equation in 2D with a solution containing multiple Gaussian bumps (Section \ref{Sec_exp5}).
	
	The SV-PINNs are trained by gradient descent (GD) and \lbs for $5,000$ steps, and the PINNs are trained by GD also for $5,000$ steps. In all experiments with $d \leq 2$ and homogeneous boundary conditions we consider an architecture with $64 \times d$ DAFF computed by the $64 \times d$ eigenfunctions of the respective Dirichlet Laplacian with the smallest eigenvalues among those with indices $k_{1},\dots,k_{d} \leq 64$. In the 3D case we consider $12 \times d$ DAFF analogously.
	
	These DAFF are then fed to a $3$-layer modified MLP with $512$ nodes in each layer and \textit{tanh} activation function. In cases with non-zero boundary conditions, we consider Fourier features. Other architectures are also considered in the experiment in Section \ref{Sec_exp1}, where their specification can be found. More details about the hyperparameters considered in each case, such as the size of the collocation grid, the test grid and the sample size for computing the stochastically weak norm, are presented in Table \ref{tab_hyper}.
	
	For cases with zero boundary conditions we take $\tau = 0.1$ for $d = 1$, $\tau = 1$ for $d = 2$ and $\tau = 10$ for $d = 3$ in the stochastically weak norm so the initial value of the empirical loss lies in the range $10^{-2}-10^{1}$ for numerical stability. For non-zero boundary conditions, we choose $\tau$ so the two terms in the SV-PINNs loss are equal at initialisation. Letting $\theta_{0}$ denote the initial parameters of the neural network, we take $\lambda = 1$ and
	\begin{equation}
		\label{def_tau}
		\tau^{2} = \frac{\mcL_{b}(\theta_{0})}{\mcL_{1,\Phi^{(n)}}(\theta_{0})},
	\end{equation}
	in which $\mcL_{1,\Phi^{(n)}}$ is the loss $\mcL_{\Phi^{(n)}}$ computed when $\varphi_{i}$ is sampled by \eqref{DST_sol} with $\tau = 1$. Proceeding in this way, the two terms in the loss \eqref{loss_SVPINNs} are equal for $\theta = \theta_{0}$ as verified by substituting \eqref{DST_sol} into \eqref{loss_SVPINNs} with $\tau$ given by \eqref{def_tau}. This avoids loss imbalance at initialisation and should yield a more stable optimisation problem.
	
	Each case is trained three times with different random initialisation and sampling for stochastic norm calculation. The results are presented as the average over these three repetitions accompanied by the standard deviation in tables and standard error in plots. For each case we present the total training time and the $L^2$ relative error after $1,000$ and $5,000$ steps in tables, and for all steps in figures, defined as
	\begin{align}
		\label{L2}
		L^2 \text{ relative error of } u_{\theta} = \sqrt{\frac{\sum_{i = 1}^{N_{test}} |u_{\theta}(x^{(i)}_{t}) - u(x^{(i)}_{t})|^{2}}{\sum_{i = 1}^{N_{test}} |u(x^{(i)}_{t})|^{2}}}
	\end{align}
	where $u_{\theta}$ is the neural network with parameter $\theta$, $u$ is the solution of the respective boundary value problem and $x^{(i)}_{t}$ are test points in a grid of $(0,1)^{d}$ defined in Table \ref{tab_hyper}. We say that $u_{\theta}$ recovers the solution with $L^{2}$ relative error $< 1\%$ when \eqref{L2} is less than $0.01$. This threshold is marked as a dashed line in the plots of the $L^2$ relative error for a better comparison. We also present plots with the solution, the approximation by the SV-PINN trained by \lbs in one of the repetitions, and the pointwise error (true minus predicted) between them on the test grid.
	
	The experiments were performed in Python with \textit{jax} \cite{jax2018github} on an NVIDIA H200 141GB GPU with double precision\footnote{Except for the experiment in $3D$, where single precision was used instead.} and the code will be made available at \url{https://github.com/dmarcondes/JINNAX}.
	
	\subsection{Experiment 1: Multi-scale high-frequency Poisson equation in 1D}
	\label{Sec_exp1}
	
	As a first experiment, we consider the following Poisson equation in one dimension
	\begin{align}
		\label{ex1}
		\begin{cases}
			u_{xx}(x) = -4 \ \!  \pi^2 \sin(2 \pi x) - 0.1\ \!(a \pi)^{2} \sin(a \pi x), & \text{ for } x \in (0,1) \\
			u(0) = u(1) = 0
		\end{cases}
	\end{align}
	with the solution
	\begin{equation*}
		u(x) = \sin(2 \pi x) + 0.1 \sin(a \pi x)
	\end{equation*}
	for $a \in \{1,25,50,100,150,200,250,300\}$. The solution has two main frequencies, given respectively by the terms $\sin(2 \pi x)$ and $0.1 \sin(a \pi x)$, which are of different orders of magnitude for large values of $a$. This multi-scale behaviour makes solving \eqref{ex1} a challenging problem for PINNs.
	
	\begin{figure}[htbp]
		\centering
		\begin{subfigure}[b]{\textwidth}
			\centering
			\stackinset{l}{-20pt}{t}{5pt}{\textbf{(a)}}{\includegraphics[width=0.9\textwidth]{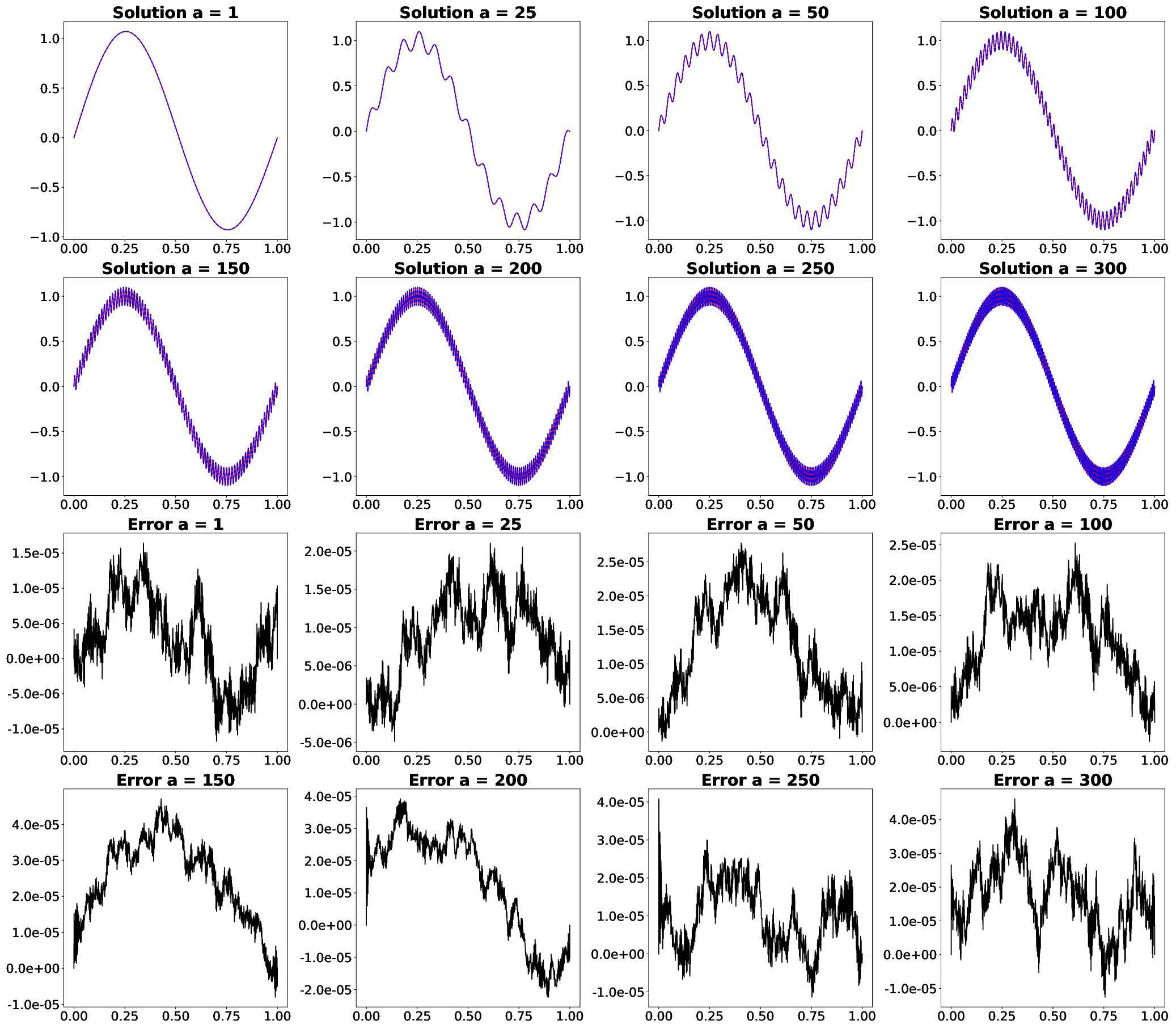}}
		\end{subfigure}
		
		\begin{subfigure}[b]{\textwidth}
			\centering
			\stackinset{l}{-20pt}{t}{5pt}{\textbf{(b)}}{\includegraphics[width=0.9\textwidth]{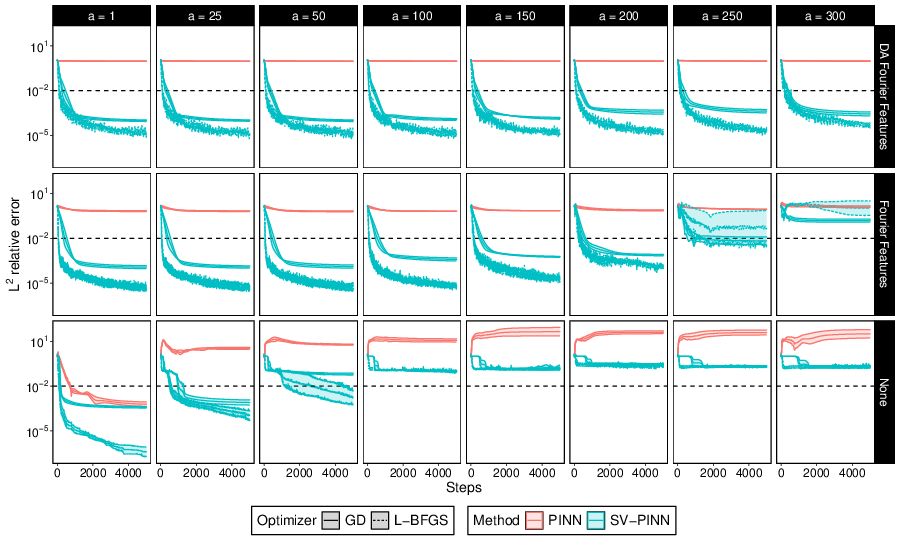}}
		\end{subfigure}
		
		\caption{\textbf{(a)} Solutions of the boundary value problems in Experiment 1 \eqref{ex1} (solid line in red) with the approximation by the SV-PINNs with DAFF trained with \lbs for $5,000$ steps (dashed line in blue), and the pointwise error between them. \textbf{(b)} Average $L^{2}$ relative error with $\pm$ one standard error over the $3$ repetitions at all training steps for each method, optimiser and architecture features for solving the boundary value problems \eqref{ex1}.} \label{fig_ex1}
	\end{figure}
	
	Apart from the PINNs and SV-PINNs with DAFF, we consider in each case the same architecture, 3 layers with 512 nodes, but with Fourier Features (FF) and with no features as detailed in Table \ref{tab_hyper}. This allows one to compare the performance of minimising the stochastically weak norm and the strong norm under different architectures. In the case of FF and no features, the boundary condition is enforced by a soft penalty to the loss function. In these cases, for PINNs and SV-PINNs trained by GD, we consider self-adaptive weights \cite{mcclenny2023self} with degree four polynomial mask. The results are presented in Figure \ref{fig_ex1} and Table \ref{tab_ex1}. We note that this problem was solved by PINNs with Fourier features for $a = 50$ in \cite{wang2021eigenvector} considering $40,000$ training steps and obtaining an $L^2$ relative error of $1.36 \times 10^{-3}$.
	
	When no features are considered, the problem cannot be properly solved either way for high values of $a$ ($a \geq 100$), which is expected due to the spectral bias of neural networks. For low values of $a$ (1 and 25) the solution was recovered by the SV-PINNs with a relative error $< 1\%$ training both by gradient descent and \lb, although the error was orders of magnitude lower with \lbs for $a = 1$. For $a = 50$, the solution was recovered with a relative error $< 1\%$ by the SV-PINNs trained with \lb. The PINNs could only recover the solution with no features for $a = 1$.
	
	The solution was recovered by the SV-PINNs with FF for all $a \leq 200$, with the result being in general better by \lbs. On the low end of $a$, the result of SV-PINNs with FF was better than with DAFF, but as $a$ increased their performance became comparable. For $a = 250,300$, the performance of SV-PINNs with FF decreased, and it could not recover the solution for any training method with $a = 300$, while that with DAFF remained with a relative error $< 1\%$. The performance of SV-PINNs with DAFF trained by GD and \lbs, respectively, were similar for all values of $a$. An interesting feature is that the pointwise error exhibits low regularity, visually resembling a function with fractional Sobolev regularity. PINNs could not recover the solution in any case with DAFF and FF.
	
	We see that, for SV-PINNs, while FF provides a better result for lower values of $a$, it is not as good as DAFF for extreme values, while DAFF recovers the solution very well for all values of $a$. We note that better results could be obtained in all cases, even with PINNs, by tailoring the architecture to the solution by, for example, increasing or decreasing the number of layers, nodes and frequencies of FF to fit simpler or more complex solutions. However, the point of this experiment was to outline that SV-PINNs with DAFF are robust to the hyperparameters of the architecture, and the solution can be properly recovered without fine-tuning for each problem.
	
	\subsection{Experiment 2: Multi-scale high-frequency Poisson equation in 2D}
	\label{Sec_exp2}
	
	For the second experiment, we consider again a Poisson equation with multi-scale high-frequency solution, but now in two dimensions:
	\begin{equation}
		\label{ex2}
		\begin{aligned}
			-\Delta u &= f, & & \text{ in } \Omega = (0,1)^2\\
				u &= 0, & & \text{ on } \partial \Omega		
		\end{aligned}
	\end{equation}
	where $f$ is chosen so that the solution of the boundary value problem is
	\begin{align*}
		u(x,y) = \sin(\pi x) \sin(\pi y)\left[\sin(a (x + y)) + \sin(2 \pi x) + \cos(3 \pi y) \right]
	\end{align*}
	for $a \in \{1,10,25,50,75\}$. The results of training SV-PINNs and PINNs with DAFF to solve this problem are presented in Figures \ref{fig_ex2_sol} and \ref{fig_ex2_res} and Table \ref{tab_ex2}.
	
	\begin{figure}[htbp]
		\centering
		\includegraphics[width=\linewidth]{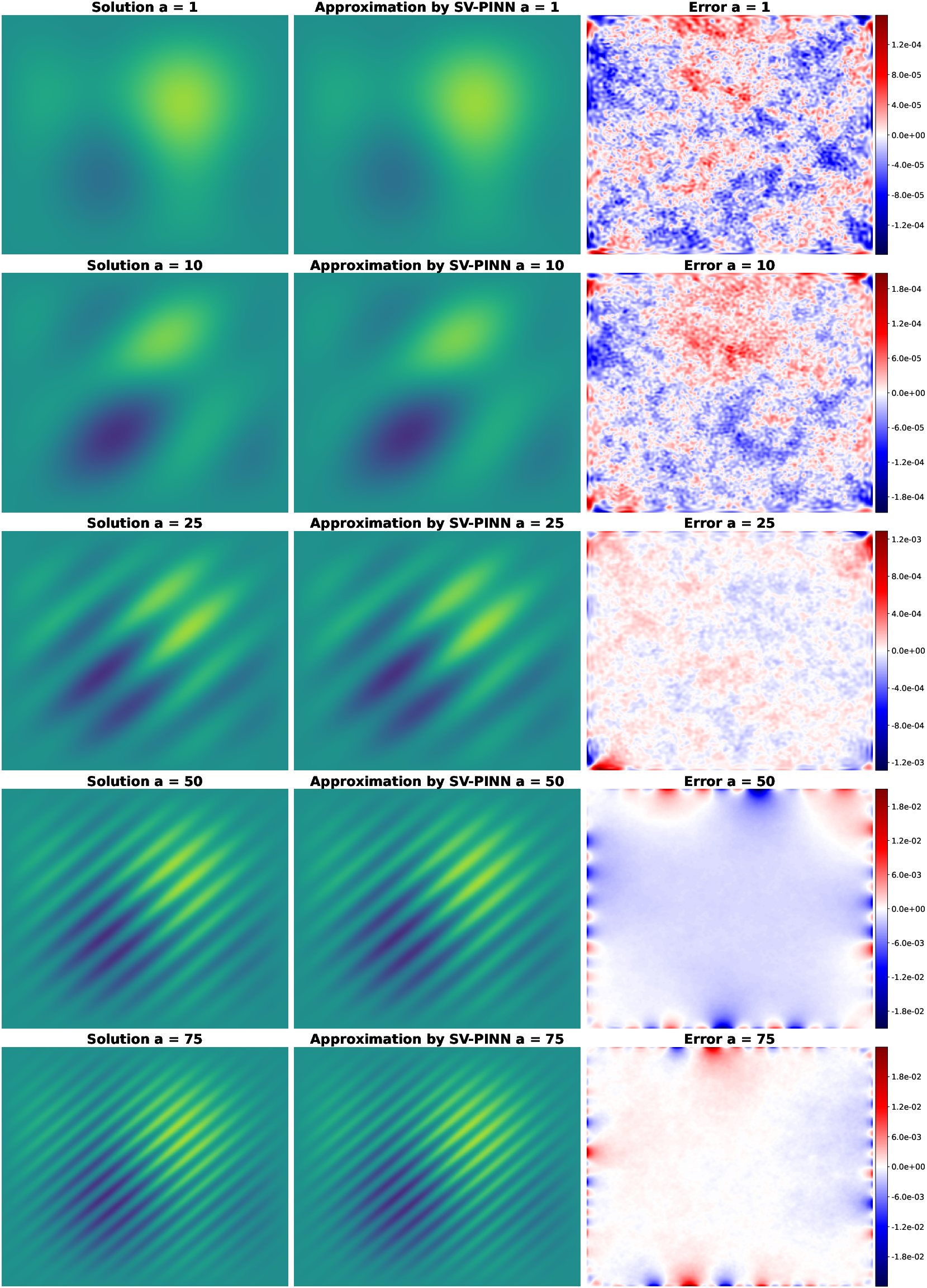}
		\caption{Solution of the boundary value problems in Experiment 2 \eqref{ex2}, the approximation by an SV-PINN trained with \lbs for $5,000$ steps and its pointwise error.}\label{fig_ex2_sol}
	\end{figure}
	
	\begin{figure}[htbp]
		\centering
		\includegraphics[width=\linewidth]{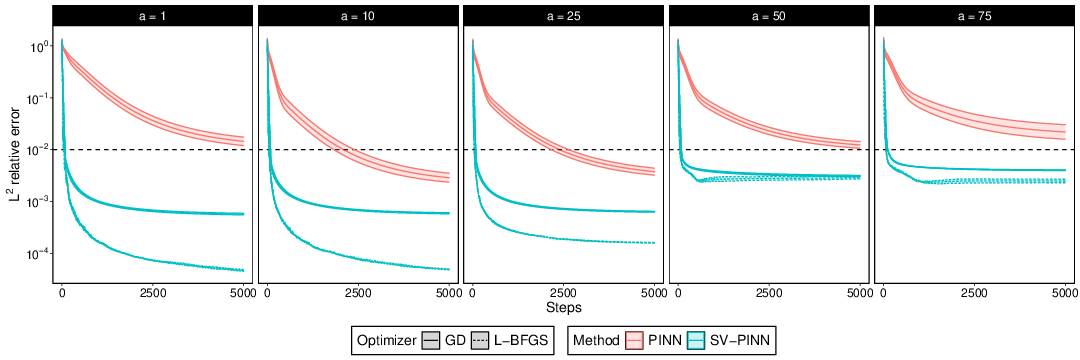}
		\caption{Average $L^{2}$ relative error with $\pm$ one standard error over the $3$ repetitions at all training steps for each method and optimiser for solving the boundary value problems in Experiment 2 \eqref{ex2}.} \label{fig_ex2_res}
	\end{figure}
	
	\begin{table}[htbp]
		\centering
		\caption{$L^2$ relative error and training time of solving the boundary value problems in Experiment 2 \eqref{ex2} with PINNs and SV-PINNs trained by gradient descent (GD) and \lb. The values are the average over $3$ repetitions after $1,000$ and $5,000$ training steps with the respective standard deviation in parentheses. The number of steps to achieve $L^2$-relative error $< 0.01$ is also presented for the cases in which all repetitions achieved this threshold in $5,000$ steps.}\label{tab_ex2}
		\resizebox{\linewidth}{!}{\begin{tabular}{lllllllc}
				\hline
				\multirow{2}{*}{a} & \multirow{2}{*}{Method} & \multirow{2}{*}{Optimiser} & \multicolumn{2}{c}{$1,000$ steps} & \multicolumn{2}{c}{$5,000$ steps} & Steps to \\
				\cline{4-7}& & & $L^2$ relative error & Time (m) &  $L^2$ relative error & Time (m) & $L^2$-RE $< 0.01$ \\
				\hline
				1 & PINN & GD & 1.745e-01 (5.195e-02) & 0.73 (0) & 1.496e-02 (4.392e-03) & 3.08 (0) &  \\ 
				\cline{2-8} & SV-PINN & GD & 9.986e-04 (5.419e-05) & 0.77 (0.01) & 5.750e-04 (3.635e-05) & 3.28 (0) & 87 (2.6) \\ 
				&  & L-BFGS & 1.735e-04 (1.016e-05) & 4.02 (0.01) & 4.707e-05 (3.795e-06) & 7.93 (0.02) & 48.7 (13.3) \\ 
				\hline 10 & PINN & GD & 3.767e-02 (1.689e-02) & 0.74 (0.03) & 3.005e-03 (1.100e-03) & 3.1 (0.04) & 2133.3 (511.8) \\ 
				\cline{2-8} & SV-PINN & GD & 1.011e-03 (4.951e-05) & 0.77 (0) & 5.955e-04 (2.634e-05) & 3.27 (0) & 82 (3.6) \\ 
				&  & L-BFGS & 1.735e-04 (2.362e-06) & 4.02 (0.02) & 4.966e-05 (1.590e-06) & 7.93 (0.02) & 39.3 (1.5) \\ 
				\hline 25 & PINN & GD & 4.201e-02 (1.347e-02) & 0.73 (0) & 3.873e-03 (1.116e-03) & 3.09 (0.01) & 2420.3 (442.5) \\ 
				\cline{2-8} & SV-PINN & GD & 1.031e-03 (4.888e-05) & 0.77 (0) & 6.375e-04 (2.292e-05) & 3.27 (0) & 81.7 (2.1) \\ 
				&  & L-BFGS & 2.879e-04 (5.703e-06) & 4 (0.01) & 1.621e-04 (1.798e-06) & 7.9 (0.01) & 38.7 (2.5) \\ 
				\hline 50 & PINN & GD & 6.479e-02 (1.990e-02) & 0.72 (0) & 1.264e-02 (3.114e-03) & 3.08 (0) &  \\ 
				\cline{2-8} & SV-PINN & GD & 3.807e-03 (2.111e-04) & 0.77 (0) & 3.116e-03 (1.653e-04) & 3.27 (0) & 87.3 (4.5) \\ 
				&  & L-BFGS & 2.718e-03 (4.052e-04) & 4.01 (0.01) & 2.926e-03 (3.201e-04) & 7.9 (0.02) & 44.3 (4.7) \\ 
				\hline 75 & PINN & GD & 8.602e-02 (3.591e-02) & 0.73 (0) & 2.431e-02 (1.315e-02) & 3.08 (0) &  \\ 
				\cline{2-8} & SV-PINN & GD & 4.638e-03 (6.903e-05) & 0.77 (0) & 4.024e-03 (1.198e-04) & 3.28 (0) & 110.3 (2.1) \\ 
				&  & L-BFGS & 2.473e-03 (4.923e-05) & 4.01 (0.01) & 2.509e-03 (3.346e-04) & 7.9 (0.01) & 66.3 (4.5) \\
				\hline
		\end{tabular}}
	\end{table}
	
	For all values of $a$, the SV-PINNs recovered the solution with an $L^2$ relative error $< 1\%$ after, on average, no more than $110$ steps, with \lbs achieving notably lower errors after $1,000$ steps: $1.735 \times 10^{-4}$, $1.735 \times 10^{-4}$, $2.879 \times 10^{-4}$, $2.718 \times 10^{-3}$ and $2.473 \times 10^{-3}$ for increasing values of $a$, respectively. Remarkably, training SV-PINNs with \lbs for $1,000$ steps achieved a better performance than training with GD for $5,000$ steps. PINNs were able to recover the solution with an error $< 1\%$ for $a = 10, 25$ after $5,000$ training steps. The performance of SV-PINNs was more stable, i.e., smaller standard deviations, over the three repetitions than that of PINNs.
	
	The SV-PINNs converged with significantly fewer steps than PINNs, attaining after $1,000$ steps an $L^2$ relative error at least one order of magnitude lower than that of PINNs, trained by both GD and \lb. This difference is especially striking for small values of $a$ (1, 10 and 25) in which SV-PINNs trained by \lbs reached an error approximately one order of magnitude lower than SV-PINNs trained by GD and between two and three orders of magnitude lower than that of PINNs. Although training by \lbs takes more time per step than GD, when comparing SV-PINNs trained by \lbs for $1,000$ steps and PINNs trained for $5,000$ steps, the former achieved in all cases an $L^2$ relative error at least one order of magnitude lower with only around 30\% more training time.
	
	We see in Figure \ref{fig_ex2_sol} that the SV-PINNs trained by \lbs recovered the solution well for all values of $a$. However, while for $a \leq 25$ the pointwise error is irregularly spread over the domain, for $a = 50, 75$ the solution was better recovered away from the boundary. 
	
	As in the first experiment, results could be improved by fine-tuning the architecture to each solution by adjusting, for instance, the number of DAFF, layers and nodes. Nevertheless, these experiments illustrate again how SV-PINNs are robust to the hyperparameters of the architecture over problems with varying complexities.
	
	\subsection{Experiment 3: Poisson equation with high-frequency coefficients}
	\label{Sec_exp3}
	
	In the third experiment, we consider a Poisson equation with rapidly oscillating variable coefficients and a high-frequency solution in two dimensions:
	\begin{equation}
		\label{ex3}
		\begin{aligned}
			-\nabla \cdot \!\left(a(x,y)\,\nabla u(x,y)\right) &= f(x,y), & &(x,y) \in \Omega = (0,1)^2 \\
			u(x,y) &= 0, & &(x,y) \in \partial \Omega
		\end{aligned}
	\end{equation}
	with 
	\begin{align*}
		a(x,y) = 1 + \beta \sin(k_a \pi x) \sin(k_a \pi y)
	\end{align*}
	and $f$ such that the solution is
	\begin{align*}
		u(x,y) = \sin(k_u \pi x)\sin(k_u \pi y)
	\end{align*}
	for $k_a = 20, k_u = 10$ and $\beta = 0.75$. Unlike Experiments 1 and 2, this problem involves rapidly oscillating coefficients, which significantly complicate the residual evaluation arising from the divergence-form structure of the operator and is known to be challenging for PINNs. The results of training SV-PINNs and PINNs with DAFF to solve this problem are presented in Figure \ref{fig_ex3} and Table \ref{tab_ex3}.
	
	\begin{table}[htbp]
		\centering
		\caption{$L^2$ relative error and training time of solving the boundary value problem in Experiment 3 \eqref{ex3} with PINNs and SV-PINNs trained by gradient descent (GD) and \lb. The values are the average over $3$ repetitions after $1,000$ and $5,000$ training steps with the respective standard deviation in parentheses. The number of steps to achieve $L^2$-relative error $< 0.01$ is also presented for the cases in which all repetitions achieved this threshold in $5,000$ steps.} \label{tab_ex3}
		\resizebox{\linewidth}{!}{\begin{tabular}{llllllc}
				\hline
				\multirow{2}{*}{Method} & \multirow{2}{*}{Optimiser} & \multicolumn{2}{c}{1,000 steps} & \multicolumn{2}{c}{5,000 steps} & Steps to \\ \cline{3-6}
				& & $L^2$ relative error & Time (m) & $L^2$ relative error & Time (m) & $L^2$-RE $< 0.01$ \\ 
				\hline
				PINN & GD & 6.844e-02 (2.379e-02) & 1.06 (0.02) & 1.946e-02 (7.183e-03) & 4.38 (0.04) &  \\ 
				\hline SV-PINN & GD & 1.734e-03 (5.887e-05) & 1.1 (0.01) & 1.007e-03 (2.827e-05) & 4.59 (0.03) & 108.7 (3.1) \\ 
				 & L-BFGS & 3.620e-04 (3.045e-05) & 4.92 (0.02) & 9.468e-05 (1.283e-05) & 9.89 (0.05) & 56 (2) \\ 
				\hline
		\end{tabular}}
	\end{table}
	
	\begin{figure}[htbp]
		\centering
		\begin{subfigure}[b]{\textwidth}
			\centering
			\stackinset{l}{-20pt}{t}{5pt}{\textbf{(a)}}{\includegraphics[width=\textwidth]{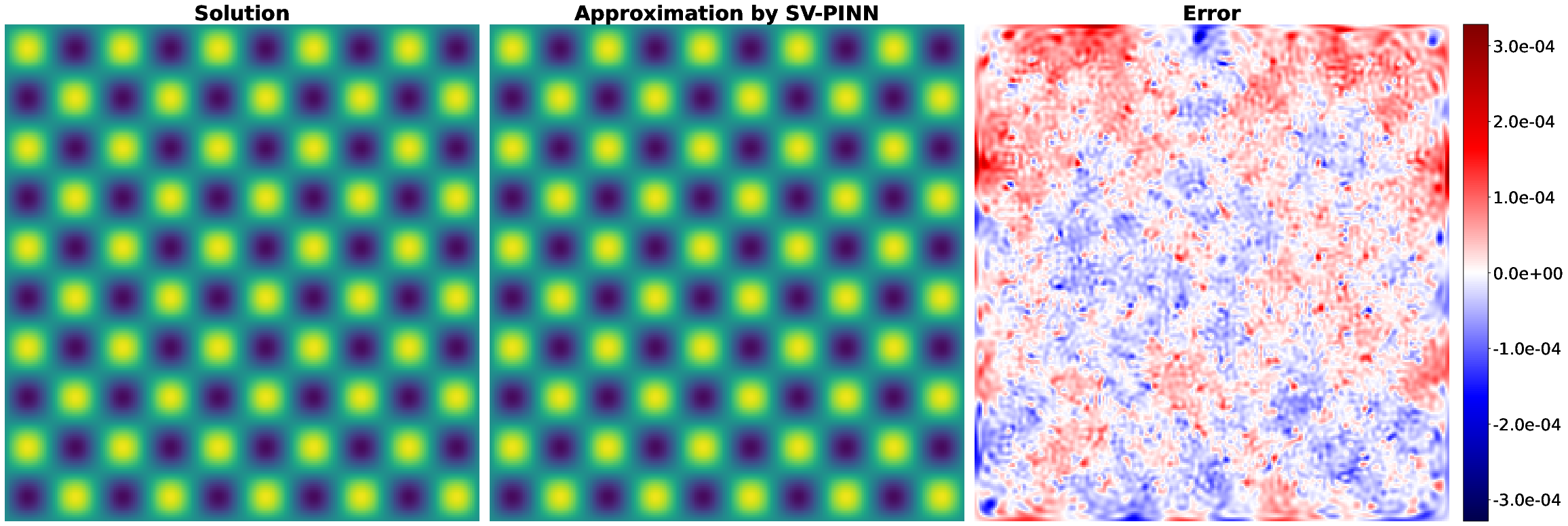}}
		\end{subfigure}
		
		\begin{subfigure}[b]{\textwidth}
			\centering
			\stackinset{l}{-20pt}{t}{5pt}{\textbf{(b)}}{\includegraphics[width=\textwidth]{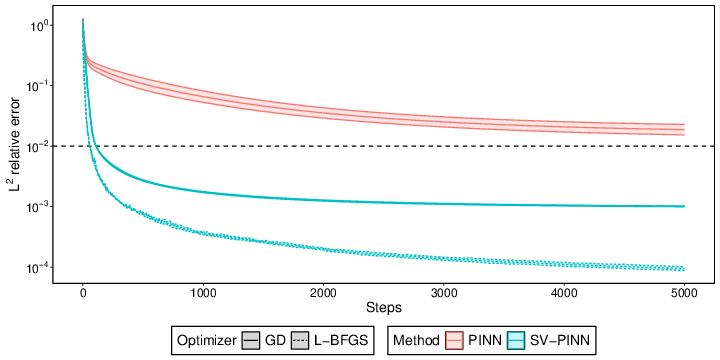}}
		\end{subfigure}
		
		\caption{\textbf{(a)} Solution of the boundary value problem in Experiment 3 \eqref{ex3}, the approximation by an SV-PINN trained with \lbs for $5,000$ steps and its pointwise error. \textbf{(b)} Average $L^{2}$ relative error with $\pm$ one standard error over the $3$ repetitions at all training steps for each method and optimiser.} \label{fig_ex3}
	\end{figure}
	
	The SV-PINNs trained by \lbs had an $L^2$ relative error after $1,000$ steps of $3.620 \times 10^{-4}$, just under half order of magnitude lower than that of SV-PINNs trained by GD for $5,000$ steps and between one and two orders of magnitude lower than that of PINNs. The training time of SV-PINNs trained by \lbs for $1,000$ steps was slightly greater than that of both PINNs and SV-PINNs trained by GD for $5,000$ steps.
	
	We see in Figure \ref{fig_ex3} that the convergence of SV-PINNs was also significantly faster in this case, especially when training by \lb, and the result is more stable over the repetitions. Furthermore, the solution has been well recovered by the SV-PINNs with the pointwise error irregularly scattered over the domain.
	
	\subsection{Experiment 4: Helmholtz equation in 2D}
	\label{Sec_exp4}
	
	To further increase the complexity, we consider the two-dimensional Helmholtz equation, which involves an indefinite operator and highly oscillatory solutions, making it substantially harder than the Poisson equations considered in the previous experiments:
	\begin{equation}
		\label{ex4}
		\begin{aligned}
			\Delta u + k^{2} u &= f, & & \text { in } \Omega = (0,1)^2 \\
			u &= 0, & & \text { on } \partial \Omega
		\end{aligned}
	\end{equation}
	in which $f$ is such that the solution is
	\begin{align*}
		u(x,y) = \sin(k \pi x)\sin(k \pi y)
	\end{align*}
	for $k \in \{5,10\}$. The results of training SV-PINNs and PINNs with DAFF to solve this problem are presented in Figure \ref{fig_ex4} and Table \ref{tab_ex4}.
	
	\begin{figure}[htbp]
		\centering
		\begin{subfigure}[b]{\textwidth}
			\centering
			\stackinset{l}{-20pt}{t}{5pt}{\textbf{(a)}}{\includegraphics[width=\textwidth]{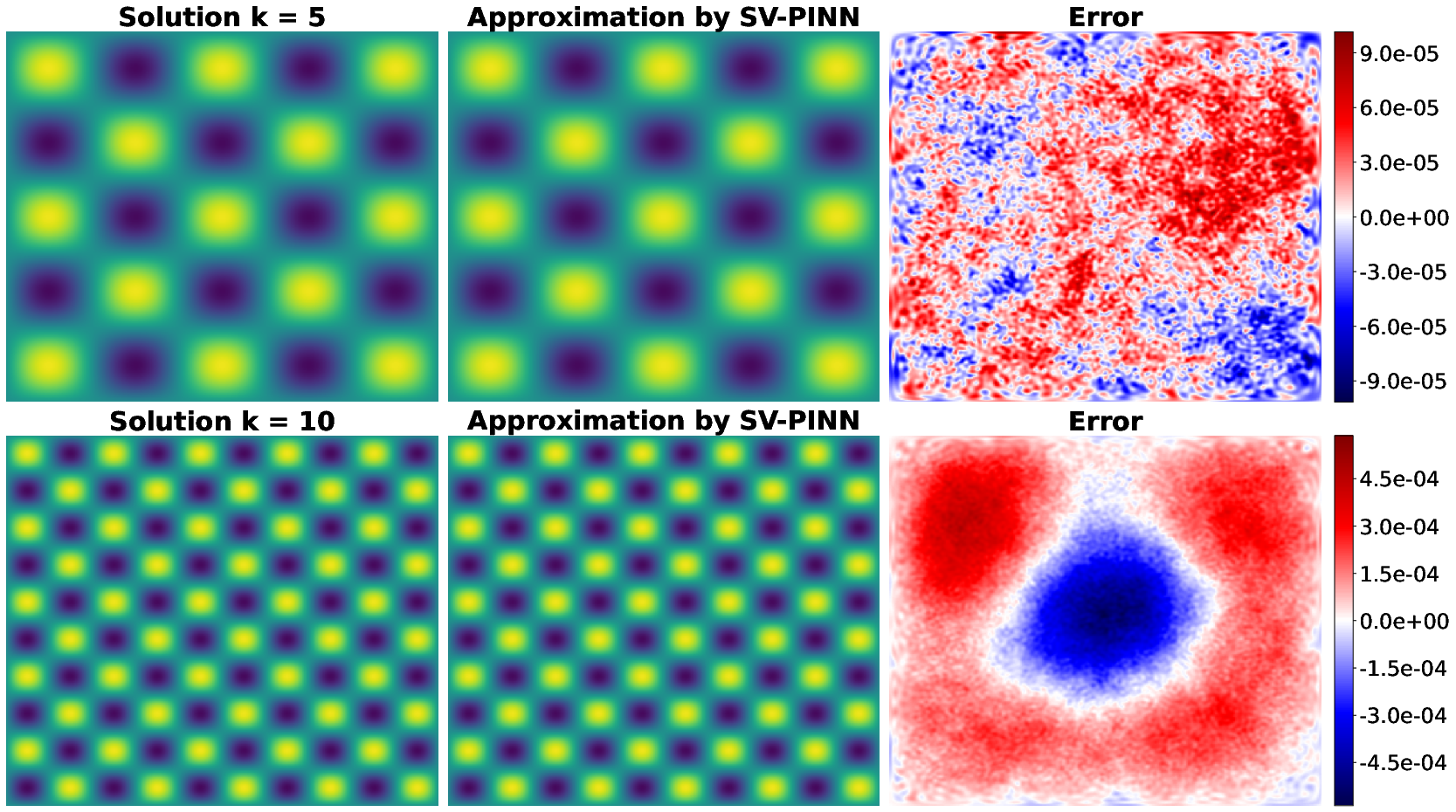}}
		\end{subfigure}
		
		\begin{subfigure}[b]{\textwidth}
			\centering
			\stackinset{l}{-20pt}{t}{5pt}{\textbf{(b)}}{\includegraphics[width=\textwidth]{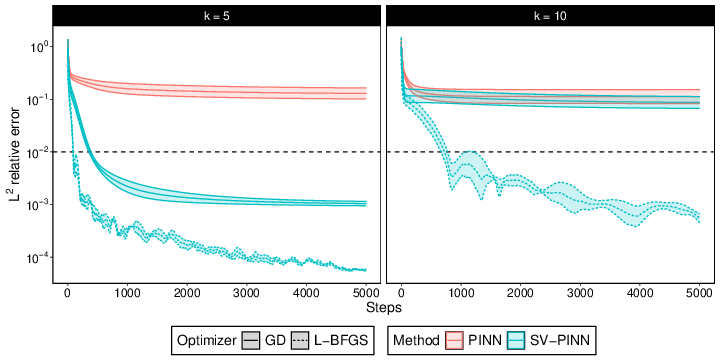}}
		\end{subfigure}
		
		\caption{\textbf{(a)} Solution of the boundary value problem in Experiment 4 \eqref{ex4}, the approximation by an SV-PINN trained with \lbs for $5,000$ steps and its pointwise error. \textbf{(b)} Average $L^{2}$ relative error with $\pm$ one standard error over the $3$ repetitions at all training steps for each method and optimiser.} \label{fig_ex4}
	\end{figure}
	
	\begin{table}[htbp]
		\centering
		\caption{$L^2$ relative error and training time of solving the boundary value problems in Experiment 4 \eqref{ex4} with PINNs and SV-PINNs trained by gradient descent (GD) and \lb. The values are the average over $3$ repetitions after $1,000$ and $5,000$ training steps with the respective standard deviation in parentheses. The number of steps to achieve $L^2$-relative error $< 0.01$ is also presented for the cases in which all repetitions achieved this threshold in $5,000$ steps.}\label{tab_ex4}
		\resizebox{\linewidth}{!}{\begin{tabular}{lllllllc}
				\hline
				\multirow{2}{*}{k} & \multirow{2}{*}{Method} & \multirow{2}{*}{Optimiser} & \multicolumn{2}{c}{$1,000$ steps} & \multicolumn{2}{c}{$5,000$ steps} & Steps to \\
				\cline{4-7}& & & $L^2$ relative error & Time (m) &  $L^2$ relative error & Time (m) & $L^2$-RE $< 0.01$ \\
				\hline
				5 & PINN & GD & 1.678e-01 (6.897e-02) & 0.99 (0.01) & 1.371e-01 (5.885e-02) & 4.31 (0.01) &  \\ 
				\cline{2-8} & SV-PINN & GD & 2.346e-03 (9.192e-04) & 1.05 (0) & 1.051e-03 (1.817e-04) & 4.61 (0) & 370.7 (33.5) \\ 
				&  & L-BFGS & 3.217e-04 (7.369e-05) & 5.48 (0.02) & 5.761e-05 (3.575e-06) & 10.47 (0.01) & 90.7 (6) \\ 
				\hline 10 & PINN & GD & 1.251e-01 (5.654e-02) & 0.99 (0) & 1.227e-01 (5.757e-02) & 4.31 (0) &  \\ 
				\cline{2-8} & SV-PINN & GD & 1.148e-01 (4.802e-02) & 1.05 (0) & 9.268e-02 (3.849e-02) & 4.61 (0) &  \\ 
				&  & L-BFGS & 6.602e-03 (4.677e-03) & 5.47 (0.01) & 5.731e-04 (1.861e-04) & 10.44 (0.02) & 886 (384) \\ 
				\hline
		\end{tabular}}
	\end{table}
	
	As in the previous experiments, the SV-PINNs trained with \lbs were able to recover the solution with an $L^2$ relative error $< 1\%$ in both cases, whereas gradient descent succeeded only for $k = 5$, although the training with \lbs converged significantly faster for $k = 5$: $L^2$ relative error of $3.217 \times 10^{-4}$ with \lbs versus $2.346 \times 10^{-3}$ with GD after $1,000$ steps. 
	
	However, the convergence of the $L^2$ relative error was less stable in this case compared with the first three experiments, especially after it reached lower values ($< 0.01$). This is due to the fact that this problem is harder to solve compared with the Poisson equation. For $k = 5$ the pointwise error was again irregularly scattered over the domain, but for $k = 10$ it was negative in the centre of the domain and positive closer to the boundary. The PINNs were not able to recover the solution.
	
	\subsection{Experiment 5: Gaussian bumps}
	\label{Sec_exp5}
		
	In the final two-dimensional experiment, we consider a Poisson equation in which the solution has inhomogeneous boundary conditions and is not built from sines and cosines:
	\begin{equation}
		\label{ex5}
		\begin{aligned}
			-\Delta u &= f, & & \text{ in } \Omega = (0,1)^2\\
			u &= g, & & \text{ on } \partial \Omega		
		\end{aligned}
	\end{equation}
	in which $f$ and $g$ are such that the solution of the boundary value problem is
	\begin{align*}
		u(x,y) = \sum_{i = 1}^{k} \sum_{j = 1}^{k} \exp\left(-50 \left[\left(x - \frac{i}{k + 1}\right)^{2} + \left(y - \frac{j}{k + 1}\right)^{2}\right]\right)
	\end{align*}
	for $k \in \{1,2,3,4,5\}$.
	
	The solution is formed by $k^2$ Gaussian bumps centred at points equally spaced in a square contained in $(0,1)^{2}$ as can be seen in Figure \ref{fig_ex5_sol}. In this case we consider an architecture with Fourier features, and the boundary condition is enforced by a soft penalty added to the loss function. The results of training SV-PINNs and PINNs to solve this problem are presented in Figures \ref{fig_ex5_sol} and \ref{fig_ex5_res} and Table \ref{tab_ex5}.
	
	\begin{figure}[htbp]
		\centering
		\includegraphics[width=\linewidth]{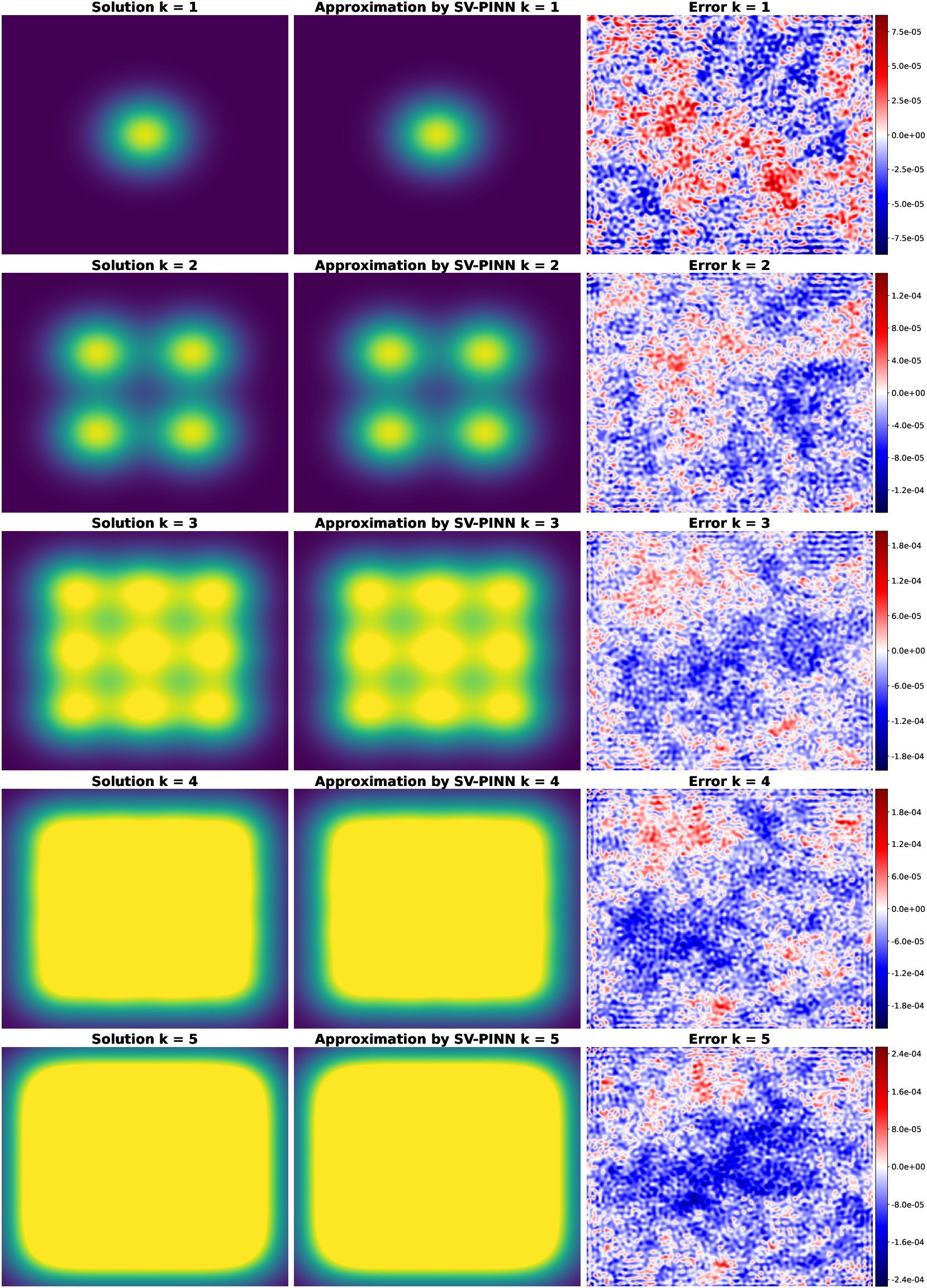}
		\caption{Solution of the boundary value problems in Experiment 5 \eqref{ex5}, the approximation by an SV-PINN trained with \lbs for $5,000$ steps and its pointwise error.}\label{fig_ex5_sol}
	\end{figure}
	
	\begin{figure}[htbp]
		\centering
		\includegraphics[width=\linewidth]{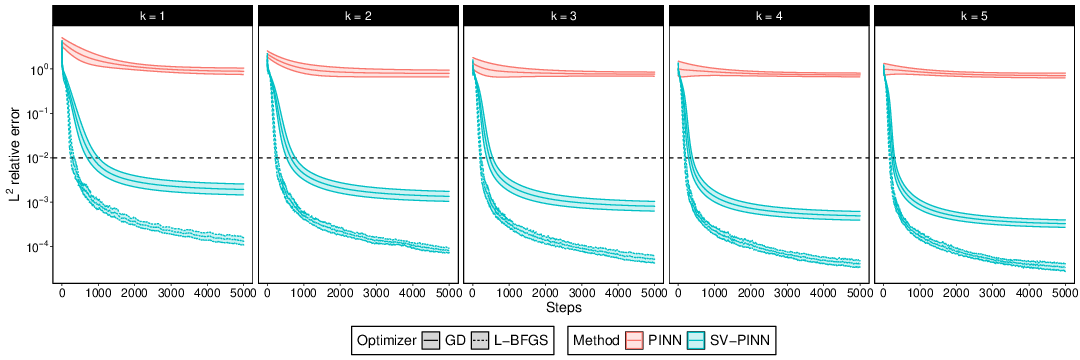}
		\caption{Average $L^{2}$ relative error with $\pm$ one standard error over the $3$ repetitions at all training steps for each method and optimiser for solving the boundary value problems in Experiment 5 \eqref{ex5}.} \label{fig_ex5_res}
	\end{figure}
	
	\begin{table}[htbp]
		\centering
		\caption{$L^2$ relative error and training time of solving the boundary value problems in Experiment 5 \eqref{ex5} with PINNs and SV-PINNs trained by gradient descent (GD) and \lb. The values are the average over $3$ repetitions after $1,000$ and $5,000$ training steps with the respective standard deviation in parentheses. The number of steps to achieve $L^2$-relative error $< 0.01$ is also presented for the cases in which all repetitions achieved this threshold in $5,000$ steps.} \label{tab_ex5}
		\resizebox{\linewidth}{!}{\begin{tabular}{lllllllc}
			\hline
			\multirow{2}{*}{k} & \multirow{2}{*}{Method} & \multirow{2}{*}{Optimiser} & \multicolumn{2}{c}{$1,000$ steps} & \multicolumn{2}{c}{$5,000$ steps} & Steps to \\
			\cline{4-7}& & & $L^2$ relative error & Time (m) &  $L^2$ relative error & Time (m) & $L^2$-RE $< 0.01$ \\
			\hline
			1 & PINN & GD & 1.657e+00 (6.915e-01) & 0.86 (0) & 9.098e-01 (2.623e-01) & 3.61 (0) &  \\ 
			\cline{2-8} & SV-PINN & GD & 8.029e-03 (5.204e-03) & 0.92 (0) & 2.111e-03 (1.128e-03) & 3.92 (0) & 859.3 (304.7) \\ 
			&  & L-BFGS & 9.185e-04 (3.666e-04) & 5.31 (0.01) & 1.363e-04 (4.461e-05) & 9.79 (0.04) & 290.3 (93.7) \\ 
			\hline 2 & PINN & GD & 1.090e+00 (4.792e-01) & 0.86 (0) & 8.142e-01 (2.326e-01) & 3.62 (0) &  \\ 
			\cline{2-8} & SV-PINN & GD & 4.874e-03 (2.708e-03) & 0.93 (0) & 1.451e-03 (6.891e-04) & 3.93 (0) & 650.7 (222.9) \\ 
			&  & L-BFGS & 5.051e-04 (1.450e-04) & 5.29 (0.02) & 8.401e-05 (1.699e-05) & 9.82 (0.02) & 251.3 (52.5) \\ 
			\hline 3 & PINN & GD & 9.211e-01 (4.003e-01) & 0.88 (0.01) & 7.723e-01 (1.475e-01) & 3.65 (0.01) &  \\ 
			\cline{2-8} & SV-PINN & GD & 2.683e-03 (1.500e-03) & 0.96 (0.03) & 8.659e-04 (3.940e-04) & 3.97 (0.03) & 465 (157) \\ 
			&  & L-BFGS & 3.437e-04 (1.130e-04) & 5.33 (0.01) & 5.436e-05 (1.730e-05) & 9.84 (0.02) & 215.7 (42.6) \\ 
			\hline 4 & PINN & GD & 8.862e-01 (2.776e-01) & 0.9 (0.01) & 7.404e-01 (1.313e-01) & 3.66 (0.01) &  \\ 
			\cline{2-8} & SV-PINN & GD & 1.472e-03 (7.480e-04) & 0.96 (0) & 5.203e-04 (2.131e-04) & 3.98 (0) & 337.3 (103.2) \\ 
			&  & L-BFGS & 2.349e-04 (7.610e-05) & 5.34 (0.02) & 4.176e-05 (1.119e-05) & 9.85 (0.03) & 186.3 (33.5) \\ 
			\hline 5 & PINN & GD & 8.729e-01 (2.175e-01) & 0.94 (0.01) & 7.190e-01 (1.520e-01) & 3.7 (0.01) &  \\ 
			\cline{2-8} & SV-PINN & GD & 9.304e-04 (3.962e-04) & 1.01 (0.02) & 3.416e-04 (1.178e-04) & 4.03 (0.02) & 262 (74.1) \\ 
			&  & L-BFGS & 1.768e-04 (4.885e-05) & 5.38 (0.04) & 3.501e-05 (1.073e-05) & 9.89 (0.04) & 169.7 (27.5) \\
			\hline
		\end{tabular}}
	\end{table}
	
	This is a problem that is known to be challenging for PINNs and could not be solved by them for any value of $k$ in this case. On the other hand, the SV-PINNs consistently solved this problem with the usual robustness over the values of $k$, with an $L^2$ relative error around one order of magnitude lower when training with \lbs compared with GD. This illustrates that SV-PINNs can have a superior performance even in inhomogeneous problems in which the boundary condition is enforced through a soft penalty to the loss.
	
	In contrast to DAFF, which did not attain satisfactory results in this case ($L^2$ relative error between $1\%$ and $5\%$ in preliminary experiments) and therefore were not considered, Fourier features are more sensitive to the frequency of the solution (see \cite{wang2021eigenvector} for more details). For all values of $k$, we considered Fourier features (Gaussian with standard deviation $5$), and we see that the performance of SV-PINNs improved with increasing $k$. This is likely because these frequencies are more compatible with the solution for higher values of $k$.
	
	As with the other examples, the convergence of the SV-PINNs trained by \lbs is significantly faster, achieving an $L^2$ relative error lower than $1\%$ in no more than $300$ steps on average for all values of $k$. We see in Figure \ref{fig_ex5_sol} an irregular pattern on the pointwise error, especially for lower values of $k$, indicating that the solution was recovered well by the SV-PINNs trained by \lbs for $5,000$ steps for all values of $k$. Again, the SV-PINNs were able to recover the solution of a challenging problem with varying complexities without fine-tuning the architecture, even in the presence of inhomogeneous boundary conditions.
	
	\subsection{Experiment 6: Helmholtz equation in $3D$}
	\label{Sec_exp6}
	
	In the last experiment in hypercube domains, we consider the Helmholtz equation in three dimensions
	\begin{equation}
		\label{ex6}
		\begin{aligned}
			\Delta u + k^{2} u &= f, & & \text { in } \Omega = (0,1)^3 \\
			u &= 0, & & \text { on } \partial \Omega
		\end{aligned}
	\end{equation}	
	in which $f$ is such that the solution to the boundary value problem is
	\begin{align*}
		u(x,y,z) = \sin(\pi x) \sin(2\pi y) \sin(3 \pi z) (x^2 + y^2  + z^2)
	\end{align*}
	for $k \in \{100,250,500\}$. The results of training SV-PINNs and PINNs to solve this problem are presented in Figures \ref{fig_ex6_sol} and \ref{fig_ex6_res} and Table \ref{tab_ex6}.
	
	In higher dimensions, memory constraints limit grid refinement, so a grid of only $32$ points on each coordinate was considered. Such a coarse discretisation makes it difficult to recover highly oscillatory solutions. To mitigate this, we adopt a relatively simple solution while considering large values of $k$. Furthermore, as illustrated in the experiments of \cite{calero2026enhancing}, the PINNs with DAFF perform particularly well in this case (simple solution, high values of $k$), providing a competitive baseline for SV-PINNs. This setting allows us to assess how the method scales beyond two dimensions and to demarcate potential limitations that are not apparent in lower-dimensional experiments.

	\begin{figure}[htbp]
		\centering
		\includegraphics[width=\linewidth]{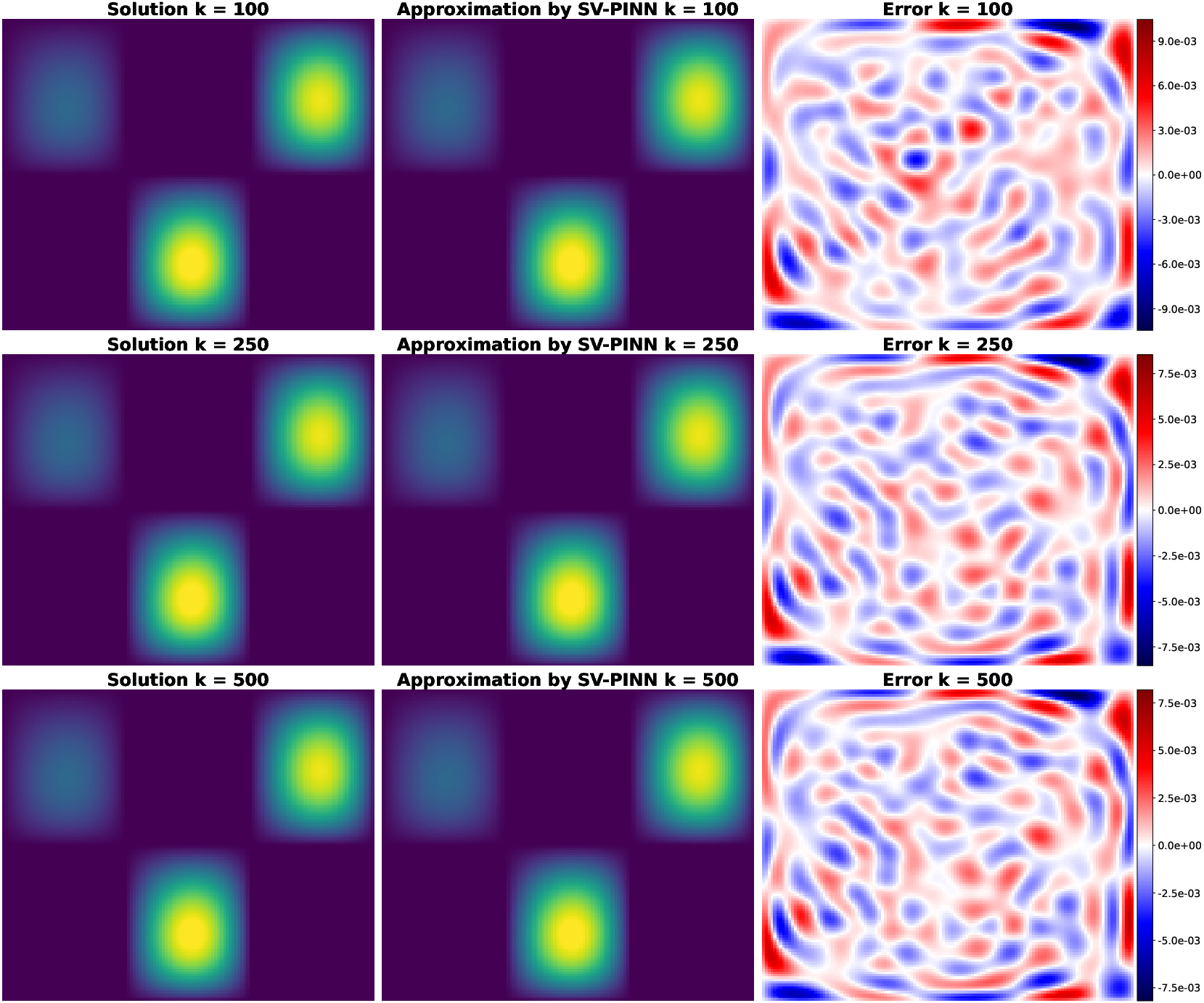}
		\caption{Solution of the boundary value problems in Experiment 6 \eqref{ex6}, the approximation by an SV-PINN trained with \lbs for $5,000$ steps and its pointwise error. We consider a slice of the respective functions with the first variable $x = 0.5$.}\label{fig_ex6_sol}
	\end{figure}
	
	\begin{figure}[htbp]
		\centering
		\includegraphics[width=\linewidth]{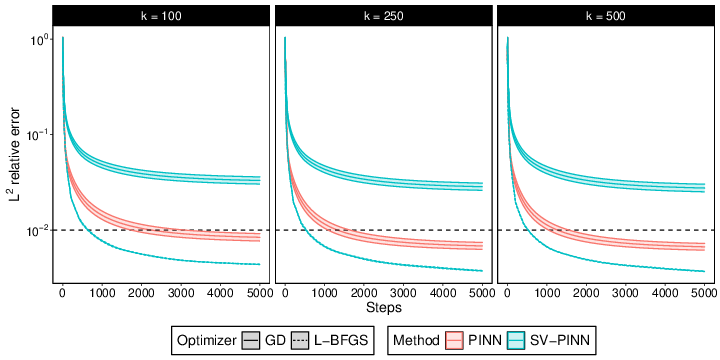}
		\caption{Average $L^{2}$ relative error with $\pm$ one standard error over the $3$ repetitions at all training steps for each method and optimiser for solving the boundary value problems in Experiment 6 \eqref{ex6}.} \label{fig_ex6_res}
	\end{figure}
	
	\begin{table}[htbp]
		\centering
		\caption{$L^2$ relative error and training time of solving the boundary value problems in Experiment 6 \eqref{ex6} with PINNs and SV-PINNs trained by gradient descent (GD) and \lb. The values are the average over $3$ repetitions after $1,000$ and $5,000$ training steps with the respective standard deviation in parentheses. The number of steps to achieve $L^2$-relative error $< 0.01$ is also presented for the cases in which all repetitions achieved this threshold in $5,000$ steps.} \label{tab_ex6}
		\resizebox{\linewidth}{!}{\begin{tabular}{lllllllc}
				\hline
				\multirow{2}{*}{k} & \multirow{2}{*}{Method} & \multirow{2}{*}{Optimiser} & \multicolumn{2}{c}{$1,000$ steps} & \multicolumn{2}{c}{$5,000$ steps} & Steps to \\
				\cline{4-7}& & & $L^2$ relative error & Time (m) &  $L^2$ relative error & Time (m) & $L^2$-RE $< 0.01$ \\
				\hline
				100 & PINN & GD & 1.443e-02 (2.615e-03) & 1.24 (0.05) & 8.465e-03 (1.350e-03) & 5.28 (0.04) &  \\ 
				\cline{2-8} & SV-PINN & GD & 4.881e-02 (6.986e-03) & 1.26 (0.01) & 3.333e-02 (5.186e-03) & 5.46 (0.01) &  \\ 
				&  & L-BFGS & 7.701e-03 (1.479e-04) & 6.35 (0.07) & 4.370e-03 (2.516e-05) & 11.98 (0.05) & 642 (21.2) \\ 
				\hline 250 & PINN & GD & 1.171e-02 (2.166e-03) & 1.22 (0) & 6.872e-03 (1.031e-03) & 5.26 (0.01) & 1418.3 (569.5) \\ 
				\cline{2-8} & SV-PINN & GD & 4.370e-02 (6.496e-03) & 1.26 (0.01) & 2.868e-02 (4.514e-03) & 5.46 (0.01) &  \\ 
				&  & L-BFGS & 6.755e-03 (1.182e-04) & 6.31 (0.01) & 3.749e-03 (4.407e-05) & 11.93 (0.03) & 531.7 (18) \\ 
				\hline 500 & PINN & GD & 1.143e-02 (2.084e-03) & 1.24 (0.05) & 6.723e-03 (9.803e-04) & 5.27 (0.04) & 1342.7 (516.8) \\ 
				\cline{2-8} & SV-PINN & GD & 4.278e-02 (6.600e-03) & 1.3 (0.07) & 2.782e-02 (4.600e-03) & 5.49 (0.06) &  \\ 
				&  & L-BFGS & 6.684e-03 (5.667e-05) & 6.33 (0.01) & 3.699e-03 (3.682e-05) & 11.96 (0.03) & 524 (4.6) \\
				\hline
		\end{tabular}}
	\end{table}
	
	As expected, the PINNs perform particularly well, especially for higher values of $k$, attaining $L^2$ relative errors on average between $6.7 \times 10^{-3}$ and $8.5 \times 10^{-3}$, outperforming the SV-PINNs trained with GD (on average no less than $2.7 \times 10^{-2}$ in all cases). Nevertheless, the SV-PINNs trained with \lbs converged significantly faster to $L^2$ relative errors less than $1\%$, on average after around 500-650 steps, attaining relative errors roughly half those of PINNs. We see in Figure \ref{fig_ex6_sol} that the pointwise error has a structured pattern, unlike the irregular scatter observed in previous experiments.
	
	This higher dimensional case remains challenging, and may require a more tailored choice of the DAFF and a finer grid to tackle more complex problems, indicating that the SV-PINNs are not immune to the curse of dimensionality, as can also be concluded from Theorem \ref{main_theorem}.
	
	We note that for lower values of $k < 100$, the performance of the methods in preliminary experiments was not as good, and SV-PINNs could not recover the solution with an $L^2$ relative error below $1\%$, but only below $5\%$ without tailoring the architecture (e.g., decreasing the number of DAFF). Interestingly, we see in Table \ref{tab_ex6} that a better result is obtained as $k$ increases, a counter-intuitive phenomenon that may be related to numerical effects or optimisation dynamics, warranting further investigation. Slightly more complex solutions could be recovered with $L^2$ relative errors below $5\%$ by tailoring the architecture. Overall, the robustness observed in two dimensions is not replicated in three dimensions, and a systematic study of SV-PINNs in 3D is deferred to future work.
	
	\section{Experiment on the circular domain}
	\label{Sec_exp_circle}
	
	As a further increase in complexity, we study the Helmholtz equation on a circular domain of radius one, in which both the indefinite operator and the domain geometry pose significant challenges for PINN-based methods:
	\begin{equation}
		\label{ex7}
		\begin{aligned}
			\Delta u + k^{2} u &= f, & & \text { in } \Omega = \{(x,y) \in \mbR^{2}: x^{2} + y^{2} < 1\} \\
			u &= 0, & & \text { on } \partial \Omega
		\end{aligned}
	\end{equation}
	for $k \in \{1,3,5\}$, in which $f$ is such that the solution to the boundary value problem is
	\begin{align*}
		u(x,y) = \phi_{0,1,0}(r(x,y),\theta(x,y))  \, \sin(k \pi x/2) \, \sin(k \pi y/2)
	\end{align*}
	where $\phi_{0,1,0}(r,\theta) = C_{0,1,0} J_{0}(\sqrt{\lambda_{0,1}}r)$ is the respective eigenfunction associated with the eigenvalue $\lambda_{0,1}$ (cf. \eqref{eigen_circle}). This choice combines radial eigenmodes of the circle with Cartesian oscillations, yielding a solution that is not separable in polar or Cartesian coordinates. Since $\phi_{0,1,0}|_{\partial\Omega} = 0$, the solution $u$ indeed satisfies the zero boundary condition.
	
	We consider an architecture with $64$ DAFF, given by sine and cosine eigenfunctions associated with the $32$ least eigenvalues among $\lambda_{n,m}$ with $n < 150$ and $m \leq 150$, and three hidden layers with $512$ nodes. We take $\tau = 0.1$ for numerical stability, $22,500$ collocation points sampled uniformly and apply the same training methods as in the previous section. More details about the implementation are in Appendix \ref{app_details}. The results of training SV-PINNs and PINNs with DAFF to solve this problem are presented in Figures \ref{fig_ex7_sol} and \ref{fig_ex7_res}, and Table \ref{tab_ex7}.
	
	For all values of $k$, the SV-PINNs trained with \lbs achieved a relative error between half and one order of magnitude lower than both PINNs and the SV-PINNs trained with GD after $5,000$ steps. Furthermore, for $k = 3,5$ the SV-PINNs trained by \lbs for $1,000$ steps achieved a significantly lower relative error than the PINNs trained for $5,000$ steps, taking around $35\%$ less time: $2.019 \times 10^{-3}$ versus $3.112  \times 10^{-3}$ for $k = 3$ and $5.036  \times 10^{-3}$ versus $3.522  \times 10^{-2}$ for $k = 5$. The PINN algorithm performed relatively well in this experiment, obtaining a better result than SV-PINNs trained by GD for $k = 1,3$. We note that the convergence of SV-PINNs trained by \lbs for all values of $k$ showed a large variation across repetitions, as reflected by the standard deviations of 183 to 312 steps around the respective mean number of steps to achieve $L^2$ relative error below $1\%$.
	
	The SV-PINNs trained by \lbs achieved a relative error lower than $1\%$ significantly faster than the other methods for $k = 1,3$, and it was the only method to achieve this mark for $k = 5$. We see in Figure \ref{fig_ex7_sol} that, for $k = 1,3$, the pointwise error of the SV-PINNs trained by \lbs is scattered over the domain without a clear pattern, while in the more challenging case $k = 5$ a clear error pattern emerges. The results on the circular domain demonstrate that SV-PINNs continue to outperform PINNs even after a substantial change in geometry, indicating that the benefits of SV-PINNs are not restricted to hypercube domains.
	
	\begin{figure}[htbp]
		\centering
		\includegraphics[width=\linewidth]{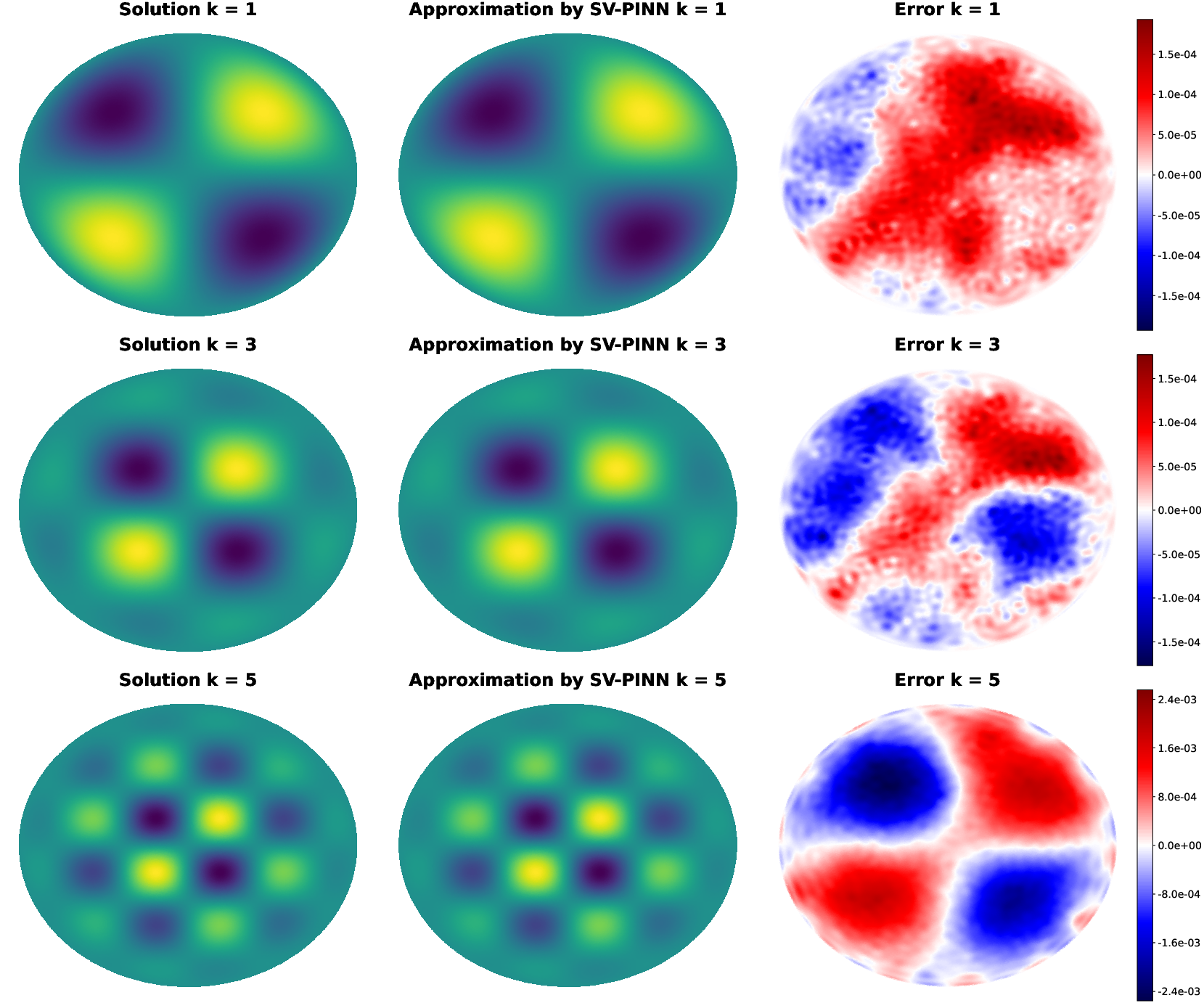}
		\caption{Solution of the boundary value problems in Experiment 7 \eqref{ex7}, the approximation by an SV-PINN trained with \lbs for $5,000$ steps and its pointwise error.}\label{fig_ex7_sol}
	\end{figure}
	
	\begin{figure}[htbp]
		\centering
		\includegraphics[width=\linewidth]{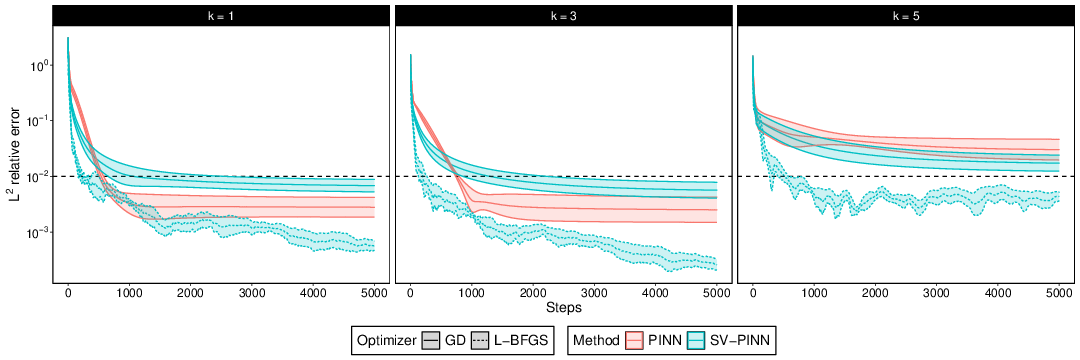}
		\caption{Average $L^{2}$ relative error with $\pm$ one standard error over the $3$ repetitions at all training steps for each method and optimiser for solving the boundary value problems in Experiment 7 \eqref{ex7}.} \label{fig_ex7_res}
	\end{figure}
	
	\begin{table}[htbp]
		\centering
		\caption{$L^2$ relative error and training time of solving the boundary value problems in Experiment 7 \eqref{ex7} with PINNs and SV-PINNs trained by gradient descent (GD) and \lb. The values are the average over $3$ repetitions after $1,000$ and $5,000$ training steps with the respective standard deviation in parentheses. The number of steps to achieve $L^2$-relative error $< 0.01$ is also presented for the cases in which all repetitions achieved this threshold in $5,000$ steps.}\label{tab_ex7}
		\resizebox{\linewidth}{!}{\begin{tabular}{lllllllc}
				\hline
				\multirow{2}{*}{k} & \multirow{2}{*}{Method} & \multirow{2}{*}{Optimiser} & \multicolumn{2}{c}{$1,000$ steps} & \multicolumn{2}{c}{$5,000$ steps} & Steps to \\
				\cline{4-7}& & & $L^2$ relative error & Time (m) &  $L^2$ relative error & Time (m) & $L^2$-RE $< 0.01$ \\
				\hline
				1 & PINN & GD & 3.816e-03 (2.351e-03) & 1.83 (0.01) & 3.203e-03 (1.704e-03) & 8.8 (0.07) & 547.3 (53.2) \\ 
				\cline{2-8} & SV-PINN & GD & 1.199e-02 (7.404e-03) & 2.63 (0.02) & 7.294e-03 (3.405e-03) & 9.73 (0.05) &  \\ 
				&  & L-BFGS & 3.460e-03 (7.596e-04) & 5.7 (0.09) & 5.930e-04 (1.998e-04) & 17.06 (0.06) & 317.3 (254.7) \\ 
				\hline 3 & PINN & GD & 4.517e-03 (2.985e-03) & 1.84 (0) & 3.112e-03 (1.928e-03) & 8.82 (0.03) & 769.3 (40.1) \\ 
				\cline{2-8} & SV-PINN & GD & 1.294e-02 (6.272e-03) & 2.63 (0.02) & 6.336e-03 (3.929e-03) & 9.74 (0.04) &  \\ 
				&  & L-BFGS & 2.019e-03 (3.436e-04) & 5.68 (0.04) & 2.700e-04 (1.062e-04) & 17.04 (0.06) & 235 (183.8) \\ 
				\hline 5 & PINN & GD & 5.705e-02 (3.116e-02) & 1.84 (0.01) & 3.522e-02 (1.949e-02) & 8.81 (0.05) &  \\ 
				\cline{2-8} & SV-PINN & GD & 4.168e-02 (2.555e-02) & 2.63 (0.01) & 1.917e-02 (1.025e-02) & 9.75 (0.02) &  \\ 
				&  & L-BFGS & 5.036e-03 (2.285e-03) & 5.71 (0.01) & 4.515e-03 (1.493e-03) & 17.09 (0.04) & 530.3 (312.6) \\
				\hline
		\end{tabular}}
	\end{table}
	
	\section{Experiment in L-shaped domain}
	\label{Sec_exp_L}
	
	As a final experiment, we consider the Helmholtz equation in the L-shaped domain
	\begin{equation}
		\label{ex8}
		\begin{aligned}
			\Delta u + k^{2} u &= f, & & \text { in } \Omega = (-1,1)^{2}\setminus(0,1)^{2} \\
			u &= 0, & & \text { on } \partial \Omega
		\end{aligned}
	\end{equation}
	for $k \in \{1, 5,10\}$, in which $f$ is such that the solution to the boundary value problem is
	\begin{align}
		\label{sol8}
		u(x,y) = \phi_{1}(x,y) \, \phi_{10}(x,y) \, \sin(2 \pi x) \, \sin(3 \pi y)
	\end{align}
	where $\phi_{1}$ and $\phi_{10}$ are the eigenfunctions associated with the first and $10$-th least eigenvalue of the Dirichlet Laplacian (cf. \eqref{DLoper}). Since $\phi_{1}|_{\partial\Omega} = \phi_{10}|_{\partial\Omega} = 0$, $u$ satisfies the zero boundary condition.
	
	We consider an architecture with $64$ DAFF and one hidden layer with $1,024$ nodes, $\tau = 0.1$ for numerical stability, the collocation points in a grid with $7,105$ points, and the same training methods as in the previous sections. In order to compute the DAFF, we interpolate the values of the eigenfunctions $\hat{\phi}_{k}(x_{c}^{(i)})$ in the grid computed with FEM (cf. Section \ref{SecL}) with radial basis functions so they are $C^2$. The same interpolation is used to compute the solution \eqref{sol8} and $f$. More details about the implementation are in Appendix \ref{app_details}. The results of training SV-PINNs and PINNs with DAFF to solve this problem are presented in Figures \ref{fig_ex8_sol} and \ref{fig_ex8_res}, and Table \ref{tab_ex8}.
	
	For $k = 1$, although both SV-PINNs, with \lbs and GD, and PINNs attained comparable performance after $5,000$ steps, around $2.5 \times 10^{-3}$, the SV-PINNs converged in significantly fewer steps. The SV-PINNs with GD and \lbs took on average $150$ and $85$ steps, respectively, to attain a relative error $< 1\%$, while the PINNs took 641 steps. We see that SV-PINNs trained by \lbs after $1,000$ steps attained a relative error around $17\%$ lower than that of PINNs after $5,000$ steps with a running time only $6\%$ greater.
	
	For $k = 5$, the SV-PINNs converged significantly faster, attaining a relative error $< 1\%$ after 126 steps with \lbs and 242 steps with GD, on average, with comparable errors with both \lbs and GD after $1,000$ and $5,000$ steps. Comparing SV-PINNs after $1,000$ steps of \lbs with PINNs after $5,000$ steps, we see that the former achieved a relative error $33\%$ lower again with only a $6\%$ greater running time.
	
	Finally, for $k = 10$, on average the relative error of SV-PINNs trained with \lbs was marginally above $1\%$ after $1,000$ ($1.043  \times 10^{-2}$) and $5,000$ ($1.073 \times 10^{-2}$) steps, the former being around $37\%$ lower than the error of PINNs after $5,000$ steps, with comparable running time. In this case, the instability of SV-PINNs, represented by the standard deviation of the relative error, was greater than for $k = 1,5$, underscoring the higher difficulty of this problem.
	
	Although PINNs with DAFF achieved better performance in this example compared with the previous ones, the performance of SV-PINNs trained by \lbs within $5,000$ steps was still superior, and they converged in significantly fewer steps. We see in Figure \ref{fig_ex8_sol} that, as $k$ increases, more patterns emerge in the pointwise error, demonstrating the difficulty of the SV-PINNs in recovering some features of the solution, even though the $L^2$ relative error remains close to $1\%$. 
	
	\begin{figure}[htbp]
		\centering
		\includegraphics[width=\linewidth]{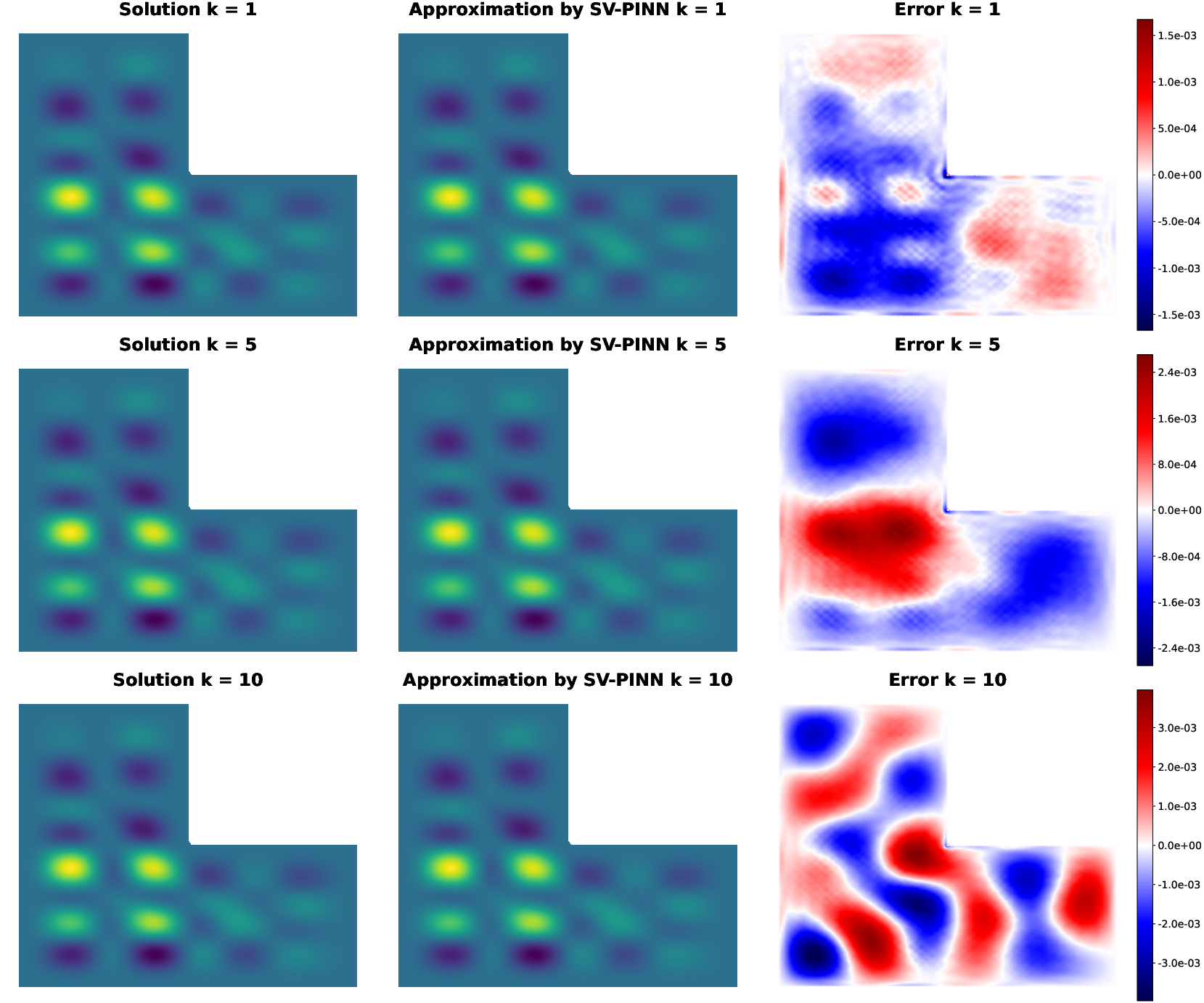}
		\caption{Solution of the boundary value problems in Experiment 8 \eqref{ex8}, the approximation by an SV-PINN trained with \lbs for $5,000$ steps and its pointwise error.}\label{fig_ex8_sol}
	\end{figure}
	
	\begin{figure}[htbp]
		\centering
		\includegraphics[width=\linewidth]{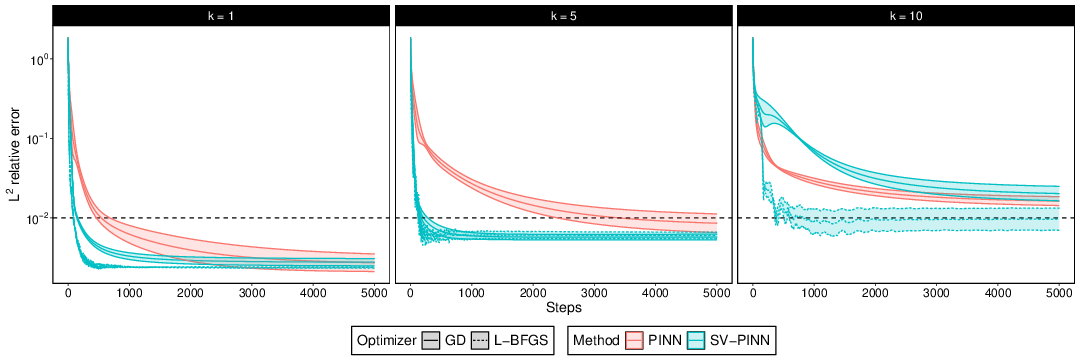}
		\caption{Average $L^{2}$ relative error with $\pm$ one standard error over the $3$ repetitions at all training steps for each method and optimiser for solving the boundary value problems in Experiment 8 \eqref{ex8}.} \label{fig_ex8_res}
	\end{figure}
	
	\begin{table}[htbp]
		\centering
		\caption{$L^2$ relative error and training time of solving the boundary value problems in Experiment 8 \eqref{ex8} with PINNs and SV-PINNs trained by gradient descent (GD) and \lb. The values are the average over $3$ repetitions after $1,000$ and $5,000$ training steps with the respective standard deviation in parentheses. The number of steps to achieve $L^2$-relative error $< 0.01$ is also presented for the cases in which all repetitions achieved this threshold in $5,000$ steps.}\label{tab_ex8}
		\resizebox{\linewidth}{!}{\begin{tabular}{lllllllc}
				\hline
				\multirow{2}{*}{k} & \multirow{2}{*}{Method} & \multirow{2}{*}{Optimiser} & \multicolumn{2}{c}{$1,000$ steps} & \multicolumn{2}{c}{$5,000$ steps} & Steps to \\
				\cline{4-7}& & & $L^2$ relative error & Time (m) &  $L^2$ relative error & Time (m) & $L^2$-RE $< 0.01$ \\
				\hline
				1 & PINN & GD & 6.650e-03 (2.919e-03) & 0.42 (0.01) & 2.918e-03 (1.317e-03) & 1.82 (0.01) & 641.3 (284) \\ 
				\cline{2-8} & SV-PINN & GD & 3.337e-03 (5.460e-04) & 0.76 (0) & 2.807e-03 (5.075e-04) & 2.22 (0) & 150.3 (14.7) \\ 
				&  & L-BFGS & 2.425e-03 (1.248e-05) & 1.94 (0) & 2.385e-03 (7.436e-05) & 4.51 (0) & 85.7 (3.1) \\ 
				\hline 5 & PINN & GD & 2.737e-02 (5.810e-03) & 0.42 (0) & 9.152e-03 (3.606e-03) & 1.82 (0.01) &  \\ 
				\cline{2-8} & SV-PINN & GD & 6.006e-03 (1.028e-03) & 0.76 (0) & 5.790e-03 (8.319e-04) & 2.22 (0.01) & 242.7 (123.3) \\ 
				&  & L-BFGS & 6.096e-03 (1.025e-03) & 1.93 (0.01) & 6.065e-03 (9.839e-04) & 4.51 (0.01) & 126.3 (11.2) \\ 
				\hline  10 & PINN & GD & 2.920e-02 (4.055e-03) & 0.41 (0) & 1.653e-02 (3.441e-03) & 1.82 (0.01) &  \\ 
				\cline{2-8} & SV-PINN & GD & 6.942e-02 (6.781e-03) & 0.76 (0.01) & 2.115e-02 (7.799e-03) & 2.22 (0.01) &  \\ 
				&  & L-BFGS & 1.043e-02 (7.006e-03) & 1.94 (0.01) & 1.073e-02 (6.380e-03) & 4.53 (0.03) &  \\
				\hline
		\end{tabular}}
	\end{table}
	
	\section{Generalisations and extensions}
	\label{SecGen}
	
	Even though we introduced SV-PINNs and performed illustrative experiments with second-order elliptic operators, the framework can be generalised and extended to other settings, which we discuss in this section. These extensions indicate that SV-PINNs are not restricted to second-order elliptic PDEs, but form an abstract framework based on the $\Phi$-stochastically weak norm applicable to broader classes of PDEs. We emphasise that the following extensions are presented at a conceptual level and detailed analytical and numerical investigations are left for future work.
	
	\subsection{High-order elliptic operators and poly-harmonic operators}
	
	Let $L_{m}$ be a $2m$-order elliptic operator and consider the problem
	\begin{equation}
		\label{high_order}
		\begin{aligned}
			L_{m}u = f & \text{ in } \Omega \\
			\frac{\partial^{i}u}{\partial\nu^{i}} = 0 & \text{ on } \partial\Omega \text{ for } i = 0,1,\dots,m-1
		\end{aligned}
	\end{equation}	
	in which $\frac{\partial^{i}u}{\partial\nu^{i}}$ is the $i$-th normal derivative. For instance, $\frac{\partial^{0}u}{\partial\nu^{0}} = u|_{\partial\Omega}$ and $\frac{\partial^{1}u}{\partial\nu^{1}} = \nabla u \cdot \nu$ in which $\nu$ is the normal vector. To characterise the weak solutions of \eqref{high_order}, we consider the eigenvalues and eigenfunctions of the poly-harmonic operator $(-\Delta)^{m}$ that satisfy the boundary conditions:
	\begin{align*}
		&(-\Delta)^{m} \phi_{k}^{(m)} = \lambda_{k}^{(m)} \phi_{k}^{(m)}\\
		&\frac{\partial^{i}\phi_{k}^{(m)}}{\partial\nu^{i}}\Big|_{\partial\Omega} = 0 \text{ for } i = 0,1,\dots,m-1.
	\end{align*}
	Assuming that $\Omega$ is bounded with sufficiently smooth boundary, the operator $(-\Delta)^m$ admits a discrete spectrum and $\{\phi_{k}^{(m)}\}$ is an orthonormal basis of $L^2(\Omega)$. In this case, let
	\begin{align*}
		H_{0}^{m}(\Omega) = \left\{u \in L^{2}(\Omega): \lVert u \rVert_{H_{0}^{m}}^{2} \coloneqq \sum_{k = 1}^{\infty} \lambda_{k}^{(m)} \, |\langle u,\phi_{k}^{(m)}\rangle|^{2} < \infty\right\}
	\end{align*}
	and its dual\footnote{We note that the norm $\lVert \cdot \rVert_{-s}$ defined in \eqref{norms} and the $\lVert \cdot \rVert_{-m}$ defined here are in general not the same. We use analogous notations to make the presentation clearer. The same applies for positive-index norms $\lVert \cdot \rVert_{\dot{H}^{s}}$ and $\lVert \cdot \rVert_{m}$.}
	\begin{align*}
		H^{-m}(\Omega) = \left\{R: H_{0}^{m}(\Omega) \to \mbR \text{ bounded linear}: \sum_{k = 1}^{\infty} (\lambda_{k}^{(m)})^{-1} \, |R(\phi_{k}^{(m)})|^{2} < \infty\right\}.
	\end{align*}
	For $f \in H^{-m}(\Omega)$, we can then define a weak solution as $u \in H_{0}^{m}(\Omega)$ such that
	\begin{align*}
		R_{u}^{(m)}(\varphi) = (L_mu - f)(\varphi) = 0, \, \, \forall \varphi \in H_{0}^{m}(\Omega), \text{ or equivalently, } \lVert R_{u}^{(m)} \rVert_{-m} = 0
	\end{align*}
	in which
	\begin{align*}
		\lVert R_{u}^{(m)} \rVert_{-m} = \sup\limits_{\substack{\varphi \in H_{0}^{m}(\Omega) \\ \varphi \neq 0}} \frac{|R_{u}^{(m)}(\varphi)|}{\lVert \varphi \rVert_{H_{0}^{m}}}.
	\end{align*}
	We refer to \cite[Chapter~7]{hackbusch2017elliptic} for sufficient conditions for the weak solution to be unique.
	
	By replacing $\lambda_{k}$ and $\phi_{k}$ with $\lambda_{k}^{(m)}$ and $\phi_{k}^{(m)}$ in the definition of $\Phi$ in \eqref{random_map}, a result analogous to Proposition \ref{prop_equiv} holds, and therefore the weak and $\Phi$-stochastically weak solutions coincide. Consequently, in this case, approximating the solution of a $2m$-order elliptic equation reduces to (a) computing eigenvalues and eigenfunctions of the poly-harmonic operator and (b) training an SV-PINN with DAFF given by the respective eigenfunctions.
	
	\subsection{Parabolic and hyperbolic equations}
	
	Fix $T > 0$ and let 
	\begin{equation*}
		L^2(0, T; H_0^1(\Omega)) \coloneqq \left\{v : [0, T] \to H_0^1(\Omega) : \int_0^T \| v(t) \|_{H_0^1(\Omega)}^2 \, dt < \infty \right\},
	\end{equation*}
	and analogously $L^2(0, T; L^2(\Omega))$ and $L^2(0, T; H^{-1}(\Omega))$, be Bochner spaces, and define
	\begin{align*}
		W_{\text{par}}(0, T) &\coloneqq \left\{ u \in L^2(0, T; H_0^1(\Omega)) : \partial_{t}u \in L^2(0, T; H^{-1}(\Omega)) \right\}\\
		W_{\text{hyp}}(0, T) &\coloneqq \left\{ u \in L^2(0, T; H_0^1(\Omega)) : \partial_{t}u \in L^2(0, T; L^2(\Omega)), \ \partial_{tt}u \in L^2(0, T; H^{-1}(\Omega)) \right\}.
	\end{align*}	
	
	Consider the parabolic equation in $[0,T] \times \Omega$
	\begin{equation}
		\label{parabolic}
	 	\begin{aligned}
	 		\partial_{t}u & = Lu + f, & & \text{ in } (0,T) \times \Omega \\ 
	 		u(0,x) & = g(x), & & \text{ for } x \in \Omega\\
	 		u(t,x) &= 0, & & \text{ for }  (t,x) \in (0,T) \times \partial\Omega
	 	\end{aligned}
	\end{equation}
	in which $L$ is a second-order linear elliptic operator, $f \in L^2(0, T; H^{-1}(\Omega))$ and $g \in L^2(\Omega)$. A $u \in W_{\text{par}}(0, T)$ is a weak solution of \eqref{parabolic} if, for almost all $t \in (0,T)$,
	\begin{align*}
	 	&R_{u(t)}^{\text{par}}(\varphi) \coloneqq (\partial_{t}u(t) - Lu(t) - f(t))(\varphi) = 0, \, \forall \varphi \in H_{0}^{1}(\Omega), \text{ or equivalently, }\\
	 	&\lVert R_{u(t)}^{\text{par}} \rVert_{-1} \coloneqq \lVert \partial_{t}u(t) - Lu(t) - f(t) \rVert_{-1} = 0
	\end{align*}
	and $u(0) = g$ in $L^{2}(\Omega)$. We assume the weak solution of \eqref{parabolic} is unique and we refer to \cite[Chapter~7]{evans2022partial} for more details about this characterisation.
 
	Likewise, consider the hyperbolic equation in $[0,T] \times \Omega$
	\begin{equation}
		\label{hyperbolic}
		\begin{aligned}
			\partial_{tt}u & = Lu + f, & & \text{ in } (0,T) \times \Omega \\ 
			u(0,x) & = g_0(x), & & \text{ for } x \in \Omega\\
			\partial_t u(0,x) & = g_1(x), & & \text{ for } x \in \Omega\\
			u(t,x) & = 0, & & \text{ for } (t,x) \in (0,T) \times \partial\Omega
		\end{aligned}
	\end{equation}
	in which $L$ is a second-order linear elliptic operator, $f \in L^2(0, T; H^{-1}(\Omega))$, $g_0 \in H_{0}^{1}(\Omega)$ and $g_1 \in L^2(\Omega)$. A $u \in W_{\text{hyp}}(0, T)$ is a weak solution of \eqref{hyperbolic} if, for almost all $t \in (0,T)$,
	\begin{align*}
		&R_{u(t)}^{\text{hyp}}(\varphi) \coloneqq (\partial_{tt}u(t) - Lu(t) - f(t))(\varphi) = 0, \, \forall \varphi \in H_{0}^{1}(\Omega), \text{ or equivalently, }\\
		&\lVert R_{u(t)}^{\text{hyp}} \rVert_{-1} \coloneqq \lVert \partial_{tt}u(t) - Lu(t) - f(t) \rVert_{-1} = 0
	\end{align*}
	and the initial conditions are satisfied: $u(0) = g_0$ and $\partial_t u(0) = g_1$ in $L^2(\Omega)$. Again we assume the weak solution of \eqref{hyperbolic} is unique and we refer to \cite[Chapter~7]{evans2022partial} for more details.
	
	We can define the $\Phi$-stochastically weak solutions of parabolic and hyperbolic linear equations based on the random map $\Phi$, analogous to Definition \ref{def_mu_weak}. It is direct from Proposition \ref{prop_equiv} applied pointwise in time for almost every $t \in (0,T)$ that weak and $\Phi$-stochastically weak solutions, of the parabolic problem \eqref{parabolic} and hyperbolic problem \eqref{hyperbolic}, are equivalent for the random map $\Phi$ in \eqref{random_map}.
	
	\begin{definition}
		\label{def_mu_weak_par}
		A $\Phi$-stochastically weak solution of the parabolic homogeneous Dirichlet problem \eqref{parabolic} is any $u \in W_{\text{par}}(0, T)$ satisfying $\lVert R_{u(t)}^{\text{par}} \rVert_{\Phi} = 0$ for almost every $t \in (0,T)$ and $u(0) = g$ in $L^{2}(\Omega)$. A $\Phi$-stochastically weak solution of the hyperbolic homogeneous Dirichlet problem \eqref{hyperbolic} is any $u \in W_{\text{hyp}}(0, T)$ satisfying $\lVert R_{u(t)}^{\text{hyp}} \rVert_{\Phi} = 0$ for almost every $t \in (0,T)$, and $u(0) = g_0$ and $\partial_t u(0) = g_1$ in $L^2(\Omega)$.
	\end{definition}
	
	The equivalence between weak and $\Phi$-stochastically weak solutions yields respectively the parabolic SV-PINN optimisation problem
	\begin{align*}
		&\min\limits_{\theta} \int_{0}^{T} \lVert \partial_{t}u_{\theta}(t,\cdot) - Lu_{\theta}(t,\cdot) - f(t,\cdot) \rVert_{\Phi}^2 \, dt + \lambda \, \lVert u_{\theta}(0,\cdot) - g \rVert_{L^2}^{2}
	\end{align*}
	and the hyperbolic SV-PINN optimisation problem
	\begin{align*}
		\min\limits_{\theta} \int_{0}^{T} \lVert \partial_{tt}u_{\theta}(t,\cdot) - Lu_{\theta}(t,\cdot) - f(t,\cdot) \rVert_{\Phi}^2 \, dt + \lambda_{1} \, \lVert u_{\theta}(0,\cdot) - g_{0} \rVert_{L^2}^{2} + \lambda_{2} \, \lVert \partial_{t}u_{\theta}(0,\cdot) - g_{1} \rVert_{L^2}^{2}
	\end{align*}
	for $\lambda,\lambda_{1},\lambda_{2} > 0$, assuming the zero boundary conditions are hard constraints on the architecture by, for example, DAFF on the spatial coordinates.
	
	The norms and integrals above can be discretised as in \eqref{app_sw} and an SV-PINN trained by \lbs or GD. Non-homogeneous boundary conditions, such as periodic, can also be considered by adding an extra term to the loss or considering other hard constraints. The operator $L$ could be replaced by non-linear operators and the SV-PINNs could still be applied provided a suitable random map $\Phi$, which induces a norm that is equivalent to the respective weak norm, is available. SV-PINNs applied to parabolic and hyperbolic problems, with linear and non-linear space operators, will be deferred to a future paper.
	
	\subsection{Galerkin-type methods based on $\Phi$-stochastically weak norms}
	
	The $\Phi$-stochastically weak norm induces, given a trial space $\mathcal{U} \subset H_{0}^{1}(\Omega)$, a Galerkin-type approximation to the solution of the homogeneous problem \eqref{basic_problem}, obtained by solving
	\begin{align}
		\label{PG}
		\min_{u \in \mathcal{U}} \, \sum_{j = 1}^{N} |R_{u}(\varphi_{j})|^{2}
	\end{align}
	for test functions $\varphi_{j}$ sampled from $\Phi$. This is an empirical approximation of
	\begin{align*}
		\min_{u \in \mathcal{U}} \, \lVert R_{u} \rVert_{\Phi}^{2}
	\end{align*}
	and represents a shift from classical Galerkin methods in which the test space is the same as the trial space, or is selected in a deterministic PDE-operator-dependent way.
	
	An empirical comparison of the method induced by \eqref{PG} with classical Galerkin methods that use the same space $\mathcal{U}$ is a natural and important topic for future research. From a theoretical perspective, it is necessary to control the error of the solution of \eqref{PG} that, as discussed in Section \ref{Sec_error}, depends on the quadrature error and concentration inequalities for the empirical $\Phi$-stochastically weak norm. As mentioned, these concentration inequalities are more tractable for convex classes $\mathcal{U}$ than for general nonlinear classes, and therefore the former provides the natural baseline to address error analysis in this setting.
	
	\subsection{Abstract framework}
	
	The equivalence between weak and $\Phi$-stochastically weak solutions, which leads to an equivalent characterisation of the weak solution of an operator equation, can be stated in a more abstract manner and is related to well-established concepts. Let $H$ be a separable Hilbert space with inner product $\langle \cdot, \cdot \rangle_H$ and 
	\begin{align*}
		A : D(A) \subset H \to H
	\end{align*}
	be a densely defined, self-adjoint, non-negative operator with compact resolvent. By the spectral theorem\footnote{This follows by applying the spectral theorem for compact self-adjoint operators \cite[Theorem~6.11]{brezis2011functional} to $(A+I)^{-1}$, which is compact and self-adjoint since $A$ is self-adjoint and non-negative with compact resolvent. The eigenvectors of $(A+I)^{-1}$ coincide with those of $A$, and if $(A+I)^{-1}\phi_k = \mu_k \phi_k$ then $A\phi_k = (\mu_k^{-1} - 1)\phi_k \eqqcolon \lambda_k \phi_k$.}, there exists an orthonormal basis $\{\phi_k\}$ of $H$ and a non-decreasing sequence of eigenvalues $0 \le \lambda_1 \le \lambda_2 \le \cdots \to \infty$ such that $A \phi_k = \lambda_k \phi_k$  for all $k \in \mathbb{N}$.
	
	For $s > 0$, we define the Hilbert space\footnote{We add one to the eigenvalues since we are assuming that $A$ is non-negative, so $\lambda_{k} = 0$ is possible for some values of $k$ and the norm is not well-defined for $s < 0$. If $A$ is positive, the addition can be omitted.}
	\begin{align*}
		H_s \coloneqq \left\{u \in H : \lVert u \rVert_{H_s}^2 \coloneqq \sum_{k=1}^{\infty} (1 + \lambda_k)^{s} |\langle u, \phi_k \rangle_H|^2 < \infty
		\right\}
	\end{align*}
	and its dual space
	\begin{align*}
		H_{-s} \coloneqq \left\{R: H_{s} \to \mbR \text{ bounded linear} : \lVert R \rVert_{-s}^2 \coloneqq \sum_{k=1}^{\infty} (1 + \lambda_k)^{-s} |R(\phi_k)|^2 < \infty
		\right\}.
	\end{align*}
	Formally, the $\Phi$-stochastically weak norm defined as
	\begin{align*}
		\lVert R \rVert_{\Phi_{s}}^{2} \coloneqq \lim\limits_{n \to \infty} \mu\left[|R(\Phi_{s}^{(n)})|^{2}\right]
	\end{align*}
	with
	\begin{align*}
		\Phi_{s}^{(n)}(\xi) = \tau \sum_{k = 1}^{n} (1 + \lambda_{k})^{-s/2} w_{k}(\xi) \phi_{k}
	\end{align*}
	for standard independent Gaussian random variables $w_{k}$ is equivalent to the $H_{-s}$ norm by an analogous argument to Proposition \ref{prop_equiv}.
	
	In this abstract setting, the process $\Phi_{s}$, when it is a Gaussian process, is called the \textit{isonormal Gaussian} process associated with the Hilbert space $H_{s}$, which plays a central role in Malliavin calculus \cite{nualart2006malliavin}. We refer to \cite[Chapter~2]{nourdin2012normal}, see also \cite{tubaro2025introduction}, for a formal presentation of isonormal Gaussian processes, Gaussian Hilbert spaces and abstract results that are related to the aforementioned norm equivalence. Importantly, however, the main interest in Malliavin calculus is on computing derivatives of functionals applied to stochastic process, not on any aspect of numerical analysis. In particular, it usually studies Gaussian processes that generate a norm equivalent to that of a Hilbert space that is a subset of $L^{2}(\Omega)$, while in our case the random process $\Phi$, which might not even be a Gaussian process, induces a norm equivalent to that of the dual Sobolev spaces. 
	
	In this framework, let
	\begin{align*}
		\mathcal{L} : H_s \to H_{-s}
	\end{align*}
	be a bounded, invertible linear operator. Given $f \in H_{-s}$, we can characterise the weak solution of the equation
	\begin{align*}
		\mathcal{L} u = f
	\end{align*}
	as the element $u \in H_s$ satisfying
	\begin{align*}
		\lVert \mathcal{L}u - f \rVert_{-s} = 0, \text{ or, equivalently, } \lVert \mathcal{L}u - f \rVert_{\Phi_s} = 0.
	\end{align*}
	Making this abstract framework concrete for specific operators $A$ and $\mcL$ would yield numerical methods analogous to SV-PINNs across a wider range of problems.
	
	\section{Discussion}
	\label{SecDis}
	
	The numerical solution of PDEs involves a well-known tension: dual-norm characterisations of weak solutions are mathematically natural, yet the supremum that defines them is computationally intractable. This paper approaches this by replacing that supremum with an expectation over random test functions, converting the optimisation problem into a stochastic one while aiming to preserve the correct weak topology up to empirical approximation.
	
	A central implication of the norm equivalence is that the quality of the weak formulation is no longer tied to the choice of a finite-dimensional deterministic test space. In contrast to V-PINNs, and more broadly Petrov-Galerkin methods, where the optimisation problem depends in an essential way on the selected test functions, the stochastically weak norm induces the correct weak topology independently of any particular basis or operator-dependent construction up to a sampling error that is, in principle, controllable. 
	
	At the same time, the framework highlights an unconventional but fundamental role of rough test functions which, in deterministic Galerkin theory, would be deemed unsuitable. This paper indicates that this intuition does not extend to stochastic testing: stability and norm equivalence emerge only after averaging, and do not require regularity at the level of individual test functions. This separation between pointwise regularity and averaged stability is a defining feature of the approach and suggests a broader principle that may be relevant beyond the particular context of neural networks.
	
	The empirical results indicate that minimising a stochastically weak residual can significantly mitigate several known failure modes of strong-form PINNs, particularly in regimes characterised by high frequencies, multi-scale behaviour, or indefinite operators. Furthermore, the clear superiority of \lb suggests that the stochastically weak norm induces a smoother or better-conditioned optimisation landscape than strong residual losses, though a precise characterisation remains to be established. This leads to the recovery of the solution within an $L^2$ relative error of $1\%$ in hundreds of steps, whereas usual PINN methods take thousands of steps, typically followed by additional thousands of steps for fine-tuning with \lb.
	
	Domain-aware Fourier features substantially improve the convergence and stability of both PINNs and SV-PINNs in challenging scenarios. However, even with DAFF, PINNs consistently fall short of SV-PINNs, indicating that the improvement brought by the stochastically weak formulation is not due to architectural choices alone. An interesting feature of the results is that, when the SV-PINN is properly trained with \lb, the pointwise error becomes highly irregular. This appears to reflect the regularity of the random test functions (cf. Lemma \ref{lemma_traj}): since realisations of $\Phi$ belong almost surely to $\dot{H}^{1-d/2-\epsilon}(\Omega)$, the residual	is probed in a negative-order Sobolev sense, and the resulting error may be small in the weak norm while remaining visibly irregular pointwise. A formal characterisation of this phenomenon would be relevant.
	
	Finally, the approach also exposes a clear trade-off. While stochastic testing removes the dependence on operator-specific test spaces, it introduces reliance on spectral information of the domain and on probabilistic error control. In domains where the Laplacian spectrum is readily available, this trade-off is favourable; in more general geometries, this may not be the case. Moreover, since convergence analysis depends on uniform concentration over the trial class, classical Galerkin error estimates are insufficient, and new tools that blend numerical analysis with 
	empirical process theory are necessary. A first step in this direction is taken in Section \ref{Sec_error}, where pointwise-in-$\theta$ bounds are established; the uniform extension over the trial class remains the central open problem.
		
	\subsection{Limitations}
	
	The main limitation of this method is that it is memory heavy. The necessity of computing, via automatic differentiation, the second derivative of the function represented by the neural network, which is given by a large neural network architecture with DAFF, and then combining the result with thousands of test functions, all within the \lbs algorithm, makes the SV-PINN very memory-intensive. The experiments could only be performed on state-of-the-art GPUs and exceed the memory capacity of standard consumer hardware. This limitation is especially striking for problems in three dimensions, where, as illustrated in Experiment 6, these issues are compounded with the usual curse of dimensionality. 
	
	Furthermore, as opposed to V-PINNs, and also the Deep Ritz Methods, in which integration by parts is leveraged to compute the loss function by taking only first derivatives of the neural network and of the test function, in SV-PINNs, this is not theoretically possible because the test functions are not differentiable. Nevertheless, in practice, since the test functions are actually a truncation of the spectral sum that defines $\Phi$, they are differentiable and integration by parts could be applied to save one derivative from the neural network and improve the computational efficiency. Moreover, in future work the implementation can be greatly improved to decrease the memory burden since it has not been developed to be strictly memory efficient, but rather as a proof-of-concept for SV-PINNs.
	
	As discussed above, the necessity of computing the eigenfunctions and eigenvalues of the Laplace operator might be a limitation in complex geometries, even though they only have to be computed once. However, as illustrated with the L-shaped domain in Experiment 8, a radial basis function interpolation of eigenfunctions computed by a basic FEM method may suffice to properly compute the eigenvalues and eigenfunctions in a way that allows relevant problems to be solved. This suggests that, if sophisticated numerical methods are applied to compute these functions, a robust method may be achievable.
	
	\subsection{Future research}
	
	As already pointed out throughout the paper, there are many interesting topics for future research. For instance, we plan to apply the SV-PINNs to parabolic and hyperbolic equations, and to perform an error analysis by developing concentration inequalities for the $\Phi$-stochastically weak norm in subsequent papers.
	
	Other relevant topics for future research are performing an in-depth benchmark of SV-PINNs for second-order linear elliptic operators, applying the SV-PINNs to higher-order linear elliptic operators, further investigating their behaviour in three-dimensional problems, and implementing the Galerkin-type method in which the $\Phi$-stochastically weak norm is minimised in other trial spaces. Domain decomposition could also be leveraged in this context. In addition, SV-PINNs, and more broadly the minimisation of the $\Phi$-stochastically weak norm, could be studied in the context of inverse problems and operator learning, in particular within the DeepONets framework \cite{lu2021learning}.
	
	Several further generalisations of the SV-PINN framework merit future investigation. First, extensions to stochastic PDEs are conceivable by considering randomness over both the test functions and the solution, connecting to the theory of mild solutions in Hilbert spaces. Second, Neumann and Robin boundary conditions require a different eigenbasis, namely that of the Neumann or Robin Laplacian, and a corresponding adaptation of the function spaces, which is a concrete and practically important direction. Finally, since the abstract framework of Section \ref{SecGen} requires only a densely defined self-adjoint operator with compact resolvent on a separable Hilbert space, it applies immediately to PDEs posed on compact Riemannian manifolds via the Laplace-Beltrami operator, which is particularly relevant for applications in geometric deep learning.
	
	\subsection{Final remark}
	
	This work shows that stochastically weak formulations can bridge the gap between abstract dual norms and practical algorithms. In particular, it shifts the question from which test functions to choose to how well the stochastically weak norm can be approximated, a question that belongs to empirical process theory rather than classical numerical analysis. Whether this shift ultimately proves more tractable remains to be seen, but the theoretical guarantees and empirical results presented here suggest that stochastic testing is not merely a computational convenience, but a genuinely different way of operationalising weak formulations of PDEs.
		
	\section*{Acknowledgements}  
	
	This work was supported by computational resources provided by the Australian Government through the National Computational Infrastructure (NCI) under the ANU Startup Scheme. I thank Lucas Franceschini, Nelson Kuhl, Pierre Portal and Adilson Simonis for stimulating discussions. AI-assisted tools were used for language editing during the revision of this manuscript.
	
	\FloatBarrier
	\bibliographystyle{plain}
	\bibliography{ref}

\begin{thebibliography}{10}

\bibitem{ainsworth2021galerkin}
Mark Ainsworth and Justin Dong.
\newblock Galerkin neural networks: A framework for approximating variational
  equations with error control.
\newblock {\em SIAM Journal on Scientific Computing}, 43(4):A2474--A2501, 2021.

\bibitem{babuvska2007stochastic}
Ivo Babu{\v{s}}ka, Fabio Nobile, and Ra{\'u}l Tempone.
\newblock A stochastic collocation method for elliptic partial differential
  equations with random input data.
\newblock {\em SIAM Journal on Numerical Analysis}, 45(3):1005--1034, 2007.

\bibitem{bartlett2002rademacher}
Peter~L Bartlett and Shahar Mendelson.
\newblock Rademacher and gaussian complexities: Risk bounds and structural
  results.
\newblock {\em Journal of machine learning research}, 3(Nov):463--482, 2002.

\bibitem{baydin2018automatic}
Atilim~Gunes Baydin, Barak~A Pearlmutter, Alexey~Andreyevich Radul, and
  Jeffrey~Mark Siskind.
\newblock Automatic differentiation in machine learning: a survey.
\newblock {\em Journal of machine learning research}, 18(153):1--43, 2018.

\bibitem{berrone2022variational}
Stefano Berrone, Claudio Canuto, and Moreno Pintore.
\newblock Variational physics informed neural networks: the role of quadratures
  and test functions.
\newblock {\em Journal of Scientific Computing}, 92(3):100, 2022.

\bibitem{jaxopt_implicit_diff}
Mathieu Blondel, Quentin Berthet, Marco Cuturi, Roy Frostig, Stephan Hoyer,
  Felipe Llinares-L{\'o}pez, Fabian Pedregosa, and Jean-Philippe Vert.
\newblock Efficient and modular implicit differentiation.
\newblock {\em arXiv preprint arXiv:2105.15183}, 2021.

\bibitem{bolin2023equivalence}
David Bolin and Kristin Kirchner.
\newblock Equivalence of measures and asymptotically optimal linear prediction
  for gaussian random fields with fractional-order covariance operators.
\newblock {\em Bernoulli}, 29(2):1476--1504, 2023.

\bibitem{concentration}
Stephane Boucheron, Gabor Lugosi, and Pascal Massart.
\newblock {\em Concentration Inequalities: A Nonasymptotic Theory of
  Independence}.
\newblock Oxford University Press, 02 2013.

\bibitem{jax2018github}
James Bradbury, Roy Frostig, Peter Hawkins, Matthew~James Johnson, Chris Leary,
  Dougal Maclaurin, George Necula, Adam Paszke, Jake Vander{P}las, Skye
  Wanderman-{M}ilne, and Qiao Zhang.
\newblock {JAX}: composable transformations of {P}ython+{N}um{P}y programs,
  2018.

\bibitem{braess2001finite}
Dietrich Braess.
\newblock {\em Finite elements: Theory, fast solvers, and applications in solid
  mechanics}.
\newblock Cambridge University Press, 2001.

\bibitem{bramble1970estimation}
James~H Bramble and Stephen~Reginald Hilbert.
\newblock Estimation of linear functionals on sobolev spaces with application
  to fourier transforms and spline interpolation.
\newblock {\em SIAM Journal on Numerical Analysis}, 7(1):112--124, 1970.

\bibitem{brezis2011functional}
Haim Brezis.
\newblock {\em Functional analysis, Sobolev spaces and partial differential
  equations}.
\newblock Springer, 2011.

\bibitem{britanak2010discrete}
Vladimir Britanak, Patrick~C Yip, and Kamisetty~Ramamohan Rao.
\newblock {\em Discrete cosine and sine transforms: general properties, fast
  algorithms and integer approximations}.
\newblock Elsevier, 2010.

\bibitem{calero2026enhancing}
Alberto~Mi{\~n}o Calero, Luis Salamanca, and Konstantinos~E Tatsis.
\newblock Enhancing physics-informed neural networks with domain-aware fourier
  features: Towards improved performance and interpretable results.
\newblock {\em arXiv preprint arXiv:2603.02948}, 2026.

\bibitem{chen2021solving}
Yifan Chen, Bamdad Hosseini, Houman Owhadi, and Andrew~M Stuart.
\newblock Solving and learning nonlinear pdes with gaussian processes.
\newblock {\em Journal of Computational Physics}, 447:110668, 2021.

\bibitem{cockayne2019bayesian}
Jon Cockayne, Chris~J Oates, Timothy~John Sullivan, and Mark Girolami.
\newblock Bayesian probabilistic numerical methods.
\newblock {\em SIAM review}, 61(4):756--789, 2019.

\bibitem{courant2024methods}
Richard Courant and David Hilbert.
\newblock {\em Methods of mathematical physics, volume 1}.
\newblock Interscience Publishers, 1953.

\bibitem{cross2024spectrum}
Elizabeth~J Cross, Timothy~J Rogers, Daniel~J Pitchforth, Samuel~J Gibson,
  Sikai Zhang, and Matthew~R Jones.
\newblock A spectrum of physics-informed gaussian processes for regression in
  engineering.
\newblock {\em Data-Centric Engineering}, 5:e8, 2024.

\bibitem{cuomo2022scientific}
Salvatore Cuomo, Vincenzo~Schiano Di~Cola, Fabio Giampaolo, Gianluigi Rozza,
  Maziar Raissi, and Francesco Piccialli.
\newblock Scientific machine learning through physics--informed neural
  networks: Where we are and what's next.
\newblock {\em Journal of Scientific Computing}, 92(3):88, 2022.

\bibitem{deepmind2020jax}
DeepMind, Igor Babuschkin, Kate Baumli, Alison Bell, Surya Bhupatiraju, Jake
  Bruce, Peter Buchlovsky, David Budden, Trevor Cai, Aidan Clark, Ivo
  Danihelka, Antoine Dedieu, Claudio Fantacci, Jonathan Godwin, Chris Jones,
  Ross Hemsley, Tom Hennigan, Matteo Hessel, Shaobo Hou, Steven Kapturowski,
  Thomas Keck, Iurii Kemaev, Michael King, Markus Kunesch, Lena Martens, Hamza
  Merzic, Vladimir Mikulik, Tamara Norman, George Papamakarios, John Quan,
  Roman Ring, Francisco Ruiz, Alvaro Sanchez, Laurent Sartran, Rosalia
  Schneider, Eren Sezener, Stephen Spencer, Srivatsan Srinivasan, Milo\v{s}
  Stanojevi\'{c}, Wojciech Stokowiec, Luyu Wang, Guangyao Zhou, and Fabio
  Viola.
\newblock The {D}eep{M}ind {JAX} {E}cosystem, 2020.

\bibitem{demkowicz2017discontinuous}
Leszek Demkowicz and Jay Gopalakrishnan.
\newblock Discontinuous petrov--galerkin (dpg) method.
\newblock {\em Encyclopedia of Computational Mechanics}, 2:777--792, 2017.

\bibitem{devore2021neural}
Ronald DeVore, Boris Hanin, and Guergana Petrova.
\newblock Neural network approximation.
\newblock {\em Acta Numerica}, 30:327--444, 2021.

\bibitem{di2012hitchhikers}
Eleonora Di~Nezza, Giampiero Palatucci, and Enrico Valdinoci.
\newblock Hitchhiker's guide to the fractional sobolev spaces.
\newblock {\em Bulletin des sciences math{\'e}matiques}, 136(5):521--573, 2012.

\bibitem{durrett2019probability}
Rick Durrett.
\newblock {\em Probability: theory and examples}, volume~49.
\newblock Cambridge university press, 2019.

\bibitem{evans2022partial}
Lawrence~C Evans.
\newblock {\em Partial differential equations}, volume~19.
\newblock American mathematical society, 2022.

\bibitem{gao2022physics}
Han Gao, Matthew~J Zahr, and Jian-Xun Wang.
\newblock Physics-informed graph neural galerkin networks: A unified framework
  for solving pde-governed forward and inverse problems.
\newblock {\em Computer Methods in Applied Mechanics and Engineering},
  390:114502, 2022.

\bibitem{glorot2010understanding}
Xavier Glorot and Yoshua Bengio.
\newblock Understanding the difficulty of training deep feedforward neural
  networks.
\newblock In {\em Proceedings of the thirteenth international conference on
  artificial intelligence and statistics}, pages 249--256. JMLR Workshop and
  Conference Proceedings, 2010.

\bibitem{hackbusch2017elliptic}
Wolfgang Hackbusch.
\newblock {\em Elliptic differential equations: theory and numerical
  treatment}, volume~18.
\newblock Springer, 2017.

\bibitem{hu2025iterative}
Tianhao Hu, Bangti Jin, and Fengru Wang.
\newblock An iterative deep ritz method for monotone elliptic problems.
\newblock {\em Journal of Computational Physics}, 527:113791, 2025.

\bibitem{ivrii2016100}
Victor Ivrii.
\newblock 100 years of weyl's law.
\newblock {\em Bulletin of Mathematical Sciences}, 6(3):379--452, 2016.

\bibitem{jakeman2008stochastic}
John~Davis Jakeman and Stephen~Gwyn Roberts.
\newblock Stochastic galerkin and collocation methods for quantifying
  uncertainty in differential equations: a review.
\newblock {\em The Proceedings of ANZIAM}, 50:C815--C830, 2008.

\bibitem{johnson1987improved}
H~Fisk Johnson.
\newblock An improved method for computing a discrete hankel transform.
\newblock {\em Computer physics communications}, 43(2):181--202, 1987.

\bibitem{karniadakis2021physics}
George~Em Karniadakis, Ioannis~G Kevrekidis, Lu~Lu, Paris Perdikaris, Sifan
  Wang, and Liu Yang.
\newblock Physics-informed machine learning.
\newblock {\em Nature Reviews Physics}, 3(6):422--440, 2021.

\bibitem{kharazmi2019variational}
Ehsan Kharazmi, Zhongqiang Zhang, and George~Em Karniadakis.
\newblock Variational physics-informed neural networks for solving partial
  differential equations.
\newblock {\em arXiv preprint arXiv:1912.00873}, 2019.

\bibitem{kharazmi2021hp}
Ehsan Kharazmi, Zhongqiang Zhang, and George~Em Karniadakis.
\newblock hp-vpinns: Variational physics-informed neural networks with domain
  decomposition.
\newblock {\em Computer Methods in Applied Mechanics and Engineering},
  374:113547, 2021.

\bibitem{kim2020fractional}
Seungil Kim.
\newblock Fractional order sobolev spaces for the neumann laplacian and the
  vector laplacian.
\newblock {\em Journal of the Korean Mathematical Society}, 57(3):721--745,
  2020.

\bibitem{kingma2014adam}
Diederik~P Kingma and Jimmy Ba.
\newblock Adam: A method for stochastic optimization.
\newblock {\em arXiv preprint arXiv:1412.6980}, 2014.

\bibitem{korte2025smoothness}
Moritz Korte-Stapff, Toni Karvonen, and {\'E}ric Moulines.
\newblock Smoothness estimation for whittle--mat{\'e}rn processes on closed
  riemannian manifolds.
\newblock {\em Stochastic Processes and their Applications}, 189:104685, 2025.

\bibitem{lindgren2022spde}
Finn Lindgren, David Bolin, and H{\aa}vard Rue.
\newblock The spde approach for gaussian and non-gaussian fields: 10 years and
  still running.
\newblock {\em Spatial Statistics}, 50:100599, 2022.

\bibitem{lindgren2011explicit}
Finn Lindgren, H{\aa}vard Rue, and Johan Lindstr{\"o}m.
\newblock An explicit link between gaussian fields and gaussian markov random
  fields: the stochastic partial differential equation approach.
\newblock {\em Journal of the Royal Statistical Society Series B: Statistical
  Methodology}, 73(4):423--498, 2011.

\bibitem{lord2014introduction}
Gabriel~J Lord, Catherine~E Powell, and Tony Shardlow.
\newblock {\em An introduction to computational stochastic PDEs}, volume~50.
\newblock Cambridge University Press, 2014.

\bibitem{lu2021learning}
Lu~Lu, Pengzhan Jin, Guofei Pang, Zhongqiang Zhang, and George~Em Karniadakis.
\newblock Learning nonlinear operators via deeponet based on the universal
  approximation theorem of operators.
\newblock {\em Nature machine intelligence}, 3(3):218--229, 2021.

\bibitem{luo2025physics}
Kuang Luo, Jingshang Zhao, Yingping Wang, Jiayao Li, Junjie Wen, Jiong Liang,
  Henry Soekmadji, and Shaolin Liao.
\newblock Physics-informed neural networks for pde problems: A comprehensive
  review.
\newblock {\em Artificial Intelligence Review}, 58(10):323, 2025.

\bibitem{mahoney2011randomized}
Michael~W Mahoney.
\newblock Randomized algorithms for matrices and data.
\newblock {\em Foundations and Trends{\textregistered} in Machine Learning},
  3(2):123--224, 2011.

\bibitem{marcondes2025complexity}
Diego Marcondes.
\newblock Complexity dependent error rates for physics-informed statistical
  learning via the small-ball method.
\newblock {\em arXiv preprint arXiv:2510.23149}, 2025.

\bibitem{mcclenny2023self}
Levi~D McClenny and Ulisses~M Braga-Neto.
\newblock Self-adaptive physics-informed neural networks.
\newblock {\em Journal of Computational Physics}, 474:111722, 2023.

\bibitem{mendelson2015learning}
Shahar Mendelson.
\newblock Learning without concentration.
\newblock {\em Journal of the ACM (JACM)}, 62(3):1--25, 2015.

\bibitem{nocedal1980updating}
Jorge Nocedal.
\newblock Updating quasi-newton matrices with limited storage.
\newblock {\em Mathematics of computation}, 35(151):773--782, 1980.

\bibitem{nochetto2015pde}
Ricardo~H Nochetto, Enrique Ot{\'a}rola, and Abner~J Salgado.
\newblock A pde approach to fractional diffusion in general domains: a priori
  error analysis.
\newblock {\em Foundations of Computational Mathematics}, 15(3):733--791, 2015.

\bibitem{nourdin2012normal}
Ivan Nourdin and Giovanni Peccati.
\newblock {\em Normal approximations with Malliavin calculus: from Stein's
  method to universality}, volume 192.
\newblock Cambridge University Press, 2012.

\bibitem{nualart2006malliavin}
David Nualart.
\newblock {\em The Malliavin calculus and related topics}.
\newblock Springer, 2006.

\bibitem{python}
{Python Core Team}.
\newblock {\em {Python: A dynamic, open source programming language}}.
\newblock {Python Software Foundation}, 2024.
\newblock Version 3.9.2.

\bibitem{rahaman2019spectral}
Nasim Rahaman, Aristide Baratin, Devansh Arpit, Felix Draxler, Min Lin, Fred
  Hamprecht, Yoshua Bengio, and Aaron Courville.
\newblock On the spectral bias of neural networks.
\newblock In {\em International conference on machine learning}, pages
  5301--5310. PMLR, 2019.

\bibitem{rahimi2007random}
Ali Rahimi and Benjamin Recht.
\newblock Random features for large-scale kernel machines.
\newblock {\em Advances in neural information processing systems}, 20, 2007.

\bibitem{raissi2019physics}
Maziar Raissi, Paris Perdikaris, and George~E Karniadakis.
\newblock Physics-informed neural networks: A deep learning framework for
  solving forward and inverse problems involving nonlinear partial differential
  equations.
\newblock {\em Journal of Computational physics}, 378:686--707, 2019.

\bibitem{strang1974analysis}
Gilbert Strang, George~J Fix, and DS~Griffin.
\newblock {\em An analysis of the finite-element method}.
\newblock Prentice-Hall, 1974.

\bibitem{strohmer2009randomized}
Thomas Strohmer and Roman Vershynin.
\newblock A randomized kaczmarz algorithm with exponential convergence.
\newblock {\em Journal of Fourier Analysis and Applications}, 15(2):262--278,
  2009.

\bibitem{talagrand1996majorizing}
Michel Talagrand.
\newblock Majorizing measures: the generic chaining.
\newblock {\em The Annals of Probability}, 24(3):1049--1103, 1996.

\bibitem{talagrand2005generic}
Michel Talagrand.
\newblock {\em The generic chaining: upper and lower bounds of stochastic
  processes}.
\newblock Springer, 2005.

\bibitem{thomee2007galerkin}
Vidar Thom{\'e}e.
\newblock {\em Galerkin finite element methods for parabolic problems},
  volume~25.
\newblock Springer Science \& Business Media, 2007.

\bibitem{toscano2025pinns}
Juan~Diego Toscano, Vivek Oommen, Alan~John Varghese, Zongren Zou, Nazanin
  Ahmadi~Daryakenari, Chenxi Wu, and George~Em Karniadakis.
\newblock From pinns to pikans: Recent advances in physics-informed machine
  learning.
\newblock {\em Machine Learning for Computational Science and Engineering},
  1(1):15, 2025.

\bibitem{tubaro2025introduction}
Luciano Tubaro and Margherita Zanella.
\newblock An introduction to malliavin calculus.
\newblock {\em arXiv preprint arXiv:2502.07941}, 2025.

\bibitem{wang2008karhunen}
Limin Wang.
\newblock {\em Karhunen-Loeve expansions and their applications}.
\newblock London School of Economics and Political Science (United Kingdom),
  2008.

\bibitem{wang2024piratenets}
Sifan Wang, Bowen Li, Yuhan Chen, and Paris Perdikaris.
\newblock Piratenets: Physics-informed deep learning with residual adaptive
  networks.
\newblock {\em Journal of Machine Learning Research}, 25(402):1--51, 2024.

\bibitem{wang2022respecting}
Sifan Wang, Shyam Sankaran, and Paris Perdikaris.
\newblock Respecting causality is all you need for training physics-informed
  neural networks.
\newblock {\em arXiv preprint arXiv:2203.07404}, 2022.

\bibitem{wang2021understanding}
Sifan Wang, Yujun Teng, and Paris Perdikaris.
\newblock Understanding and mitigating gradient flow pathologies in
  physics-informed neural networks.
\newblock {\em SIAM Journal on Scientific Computing}, 43(5):A3055--A3081, 2021.

\bibitem{wang2021eigenvector}
Sifan Wang, Hanwen Wang, and Paris Perdikaris.
\newblock On the eigenvector bias of fourier feature networks: From regression
  to solving multi-scale pdes with physics-informed neural networks.
\newblock {\em Computer Methods in Applied Mechanics and Engineering},
  384:113938, 2021.

\bibitem{wang2022and}
Sifan Wang, Xinling Yu, and Paris Perdikaris.
\newblock When and why pinns fail to train: A neural tangent kernel
  perspective.
\newblock {\em Journal of Computational Physics}, 449:110768, 2022.

\bibitem{whittle1954stationary}
Peter Whittle.
\newblock On stationary processes in the plane.
\newblock {\em Biometrika}, pages 434--449, 1954.

\bibitem{whittle1963stochastic}
Peter Whittle.
\newblock Stochastic-processes in several dimensions.
\newblock {\em Bulletin of the International Statistical Institute},
  40(2):974--994, 1963.

\bibitem{woodruff2014sketching}
David~P Woodruff.
\newblock Sketching as a tool for numerical linear algebra.
\newblock {\em Foundations and Trends{\textregistered} in Theoretical Computer
  Science}, 10(1-2):1--157, 2014.

\bibitem{xu2026weak}
Zhihang Xu, Min Wang, and Zhu Wang.
\newblock Weak transnet: A petrov-galerkin based neural network method for
  solving elliptic pdes: Z. xu, m. wang, z. wang.
\newblock {\em Journal of Scientific Computing}, 107(2):60, 2026.

\bibitem{yu2018deep}
Bing Yu et~al.
\newblock The deep ritz method: a deep learning-based numerical algorithm for
  solving variational problems.
\newblock {\em Communications in Mathematics and Statistics}, 6(1):1--12, 2018.

\bibitem{yu2025natural}
Haijun Yu and Shuo Zhang.
\newblock A natural deep ritz method for essential boundary value problems.
\newblock {\em Journal of Computational Physics}, 537:114133, 2025.

\bibitem{zang2020weak}
Yaohua Zang, Gang Bao, Xiaojing Ye, and Haomin Zhou.
\newblock Weak adversarial networks for high-dimensional partial differential
  equations.
\newblock {\em Journal of Computational Physics}, 411:109409, 2020.

\end{thebibliography}
	
	\appendix
	\section{More details about the experiments}
	\label{app_details}
	
	Table \ref{tab_hyper} present the details about the implementation of each experiment.
	
	\begin{table}[H]
		\centering
		\caption{\footnotesize Hyperparameters of the experiments in hypercube domains 1D (Ex. 1), 2D (Ex. 2, 3, 4, 5) and 3D (Ex. 6), circular domain (Ex. 7) and L-shaped domain (Ex. 8).} \label{tab_hyper}
		\resizebox{\linewidth}{!}{\begin{tabular}{|l|c|c|c|c|c|}
			\hline
			& 1-Dimension & 2-Dimensions & 3-Dimensions  & Circle & L-shape \\
			\hline
			\multirow{3}{*}{\textbf{Architecture}} & 64 DAFF + 3 $\times$ 512 & 128 DAFF + 3 $\times$ 512 (Ex. 2-4) & \multirow{3}{*}{36 DAFF + 3 $\times$ 512} & \multirow{3}{*}{64 DAFF + 3 $\times$ 512} & \multirow{3}{*}{64 DAFF + 1 $\times$ 1,024} \\
			& 32 FF ($\sigma = 10$) + 3 $\times$ 512 & 32 FF ($\sigma = 5$) + 3 $\times$ 512 (Ex. 5) & & &  \\
			& None 3 $\times$ 512 & & & & \\
			\hline
			\multirow{2}{*}{\textbf{Collocation grid}} & \multirow{3}{*}{$1,024$} & $128 \times 128$ (Ex. 2-3) & \multirow{3}{*}{$32 \times 32 \times 32$} & \multirow{3}{*}{$22,500$ random points} & \multirow{3}{*}{$7,105$ points}  \\
			& & $150 \times 150$ (Ex. 4-5) & & & \\
			& & $604$ boundary (Ex. 5) & & & \\
			\hline 
			\multirow{2}{*}{\textbf{Test grid}} & \multirow{2}{*}{$2,048$} & $256 \times 256$ (Ex. 2-3) & \multirow{2}{*}{$64 \times 64 \times 64$} & \multirow{2}{*}{$90,000$ random points} & \multirow{2}{*}{$28,033$ points}\\
			& & $300 \times 300$ (Ex. 4-5) & & &  \\
			\hline 
			\textbf{Samples weak norm} & $25,000$ & $25,000$ & $25,000$ & $18,000$ & $25,000$ \\
			\hline 
			\textbf{Steps} & \multicolumn{5}{c|}{$5,000$} \\
			\hline
			\textbf{Activation} & \multicolumn{5}{c|}{tanh} \\
			\hline
			\textbf{ADAM} \cite{baydin2018automatic} & \multicolumn{5}{c|}{Learning rate $0.001$ with exponential decay of $0.9$ every $100$ steps, Glorot initialisation \cite{glorot2010understanding}} \\
			\hline 
			\textbf{\lb} \cite{nocedal1980updating} & \multicolumn{5}{c|}{tol = 1e-9, linesearch = zoom, history\_size = 200 and default settings of \textit{jaxopt} \cite{jaxopt_implicit_diff} Python library} \\
			\hline
			\textbf{Software} &  \multicolumn{5}{c|}{Python 3.12.1 \cite{python}, and libraries \textit{jax} \cite{jax2018github}, \textit{jaxopt} \cite{jaxopt_implicit_diff}, \textit{optax} \cite{deepmind2020jax}} \\
			\hline
			\textbf{Hardware} & \multicolumn{5}{c|}{NVIDIA H200 141GB GPU} \\
			\hline
		\end{tabular}}
	\end{table}
	
	Implementation notes:
	\begin{itemize}
		\item \textbf{Experiments 1-6}: we consider a slightly modified loss function, computing the average over the sampled test functions of
		\begin{align*}
			\sum_{i = 0}^{N - 1} (Lu_{\theta}(i/N) - f(i/N))\varphi_{j}((i + 1)/(N+1))
		\end{align*}
		instead of also evaluating the residuals at the internal points $(i + 1)/(N+1)$. This slight modification led to better results in some experiments since it introduces a small quadrature perturbation that might act as implicit regularisation.
		\item \textbf{Experiment 7}: The Bessel functions are approximated by pre-computing on a dense one-dimensional grid and reconstructing the function values via interpolation during model training and evaluation. Specifically, the functions $J_{n}(r)$ are computed by smooth interpolation (cubic spline for computing test functions and a smoothed cubic radial basis for computing DAFF) of pre-computed exact Bessel evaluations at fixed $r_{1},\dots,r_{n}$ points ($n = 4,096$). As a result, evaluations during training use only the interpolated values rather than direct Bessel function calls, providing a smooth, differentiable, and computationally efficient approximation suitable for repeated use in the Helmholtz solver.
		\item \textbf{Experiment 8}: In the L-shaped domain, the Dirichlet Laplacian eigenproblem is discretised using a finite-element method with continuous piecewise-cubic polynomials (P3) bases on a conforming triangular mesh. DAFF computation, and model training and evaluation, are performed by interpolating the value of the eigenfunctions in the mesh with smoothed cubic radial basis function.
	\end{itemize}
	
	{\footnotesize
		\setlength{\tabcolsep}{4pt}
		\setlength{\LTleft}{-3cm}
		\setlength{\LTright}{\fill}
	\begin{longtable}{llllllllc}
		\caption{$L^2$ relative error and training time of solving the boundary value problem in Experiment 1 \eqref{ex1} with PINNs and SV-PINNs with different features and optimisers. The values are the average over $3$ repetitions after $1,000$ and $5,000$ training steps with the respective standard deviation in parentheses. The number of steps to achieve $L^2$-relative error $< 0.01$ is also presented for the cases in which all repetitions achieved this threshold in $5,000$ steps.} \label{tab_ex1}\\
			\hline
			\multirow{2}{*}{a} & \multirow{2}{*}{Method} & \multirow{2}{*}{Features} & \multirow{2}{*}{Optimiser} & \multicolumn{2}{c}{$1,000$ steps} & \multicolumn{2}{c}{$5,000$ steps} & Steps to\\
			\cline{5-8}& & & & $L^2$ relative error & Time (m) &  $L^2$ relative error & Time (m) & $L^2$-RE $< 0.01$ \\
			    \hline
			  1 & PINN & DAFF & GD & 9.724e-01 (4.685e-02) & 0.14 (0.02) & 9.393e-01 (4.212e-02) & 0.23 (0.02) &  \\ 
			  &  & FF & GD & 7.919e-01 (7.120e-02) & 0.18 (0.02) & 6.805e-01 (8.503e-02) & 0.27 (0.02) &  \\ 
			  &  & None & GD & 4.902e-03 (2.273e-03) & 0.15 (0.01) & 6.858e-04 (3.355e-04) & 0.22 (0.01) & 715 (47.9) \\ 
			  \cline{2-9} & SV-PINN & DAFF & GD & 2.538e-04 (9.091e-05) & 0.14 (0) & 9.906e-05 (2.193e-05) & 0.23 (0) & 385.7 (145.1) \\ 
			  &  &  & L-BFGS & 1.702e-04 (6.288e-05) & 0.52 (0.01) & 9.239e-06 (1.962e-07) & 1.34 (0.01) & 99.3 (4) \\ 
			  \cline{3-9} &  & FF & GD & 3.428e-04 (3.386e-05) & 0.16 (0) & 1.249e-04 (4.931e-05) & 0.26 (0) & 462 (150.3) \\ 
			  &  &  & L-BFGS & 4.641e-05 (1.305e-05) & 0.72 (0.01) & 6.289e-06 (3.471e-06) & 2.17 (0.02) & 51.7 (7.8) \\ 
			  \cline{3-9} &  & None & GD & 6.738e-04 (1.604e-04) & 0.16 (0.01) & 3.903e-04 (7.576e-05) & 0.24 (0.01) & 170.7 (16.8) \\ 
			  &  &  & L-BFGS & 1.442e-05 (1.132e-05) & 0.67 (0.01) & 5.838e-07 (4.267e-07) & 1.93 (0.01) & 104.3 (17.8) \\ 
			  \hline 25 & PINN & DAFF & GD & 9.698e-01 (3.620e-02) & 0.13 (0) & 9.365e-01 (3.164e-02) & 0.22 (0) &  \\ 
			  &  & FF & GD & 7.859e-01 (6.160e-02) & 0.15 (0) & 6.704e-01 (7.186e-02) & 0.24 (0) &  \\ 
			  &  & None & GD & 2.107e+00 (7.852e-01) & 0.15 (0) & 3.494e+00 (8.501e-01) & 0.22 (0) &  \\ 
			  \cline{2-9} & SV-PINN & DAFF & GD & 2.010e-04 (7.370e-05) & 0.14 (0) & 1.005e-04 (1.884e-05) & 0.24 (0) & 344.7 (109.5) \\ 
			  &  &  & L-BFGS & 2.019e-04 (1.067e-04) & 0.69 (0) & 1.270e-05 (4.081e-06) & 2.15 (0) & 87.7 (20.8) \\ 
			  \cline{3-9} &  & FF & GD & 3.651e-04 (2.411e-05) & 0.16 (0) & 1.259e-04 (3.952e-05) & 0.26 (0) & 467 (156.9) \\ 
			  &  &  & L-BFGS & 3.337e-05 (1.181e-05) & 0.73 (0.01) & 5.366e-06 (1.990e-06) & 2.2 (0.01) & 51.3 (8.1) \\ 
			  \cline{3-9} &  & None & GD & 2.448e-02 (3.528e-02) & 0.15 (0) & 9.238e-04 (5.241e-04) & 0.24 (0) & 1068.7 (208.8) \\ 
			  &  &  & L-BFGS & 1.829e-03 (1.100e-03) & 0.66 (0) & 1.670e-04 (1.785e-04) & 1.91 (0) & 538.7 (94.9) \\ 
			  \hline 50 & PINN & DAFF & GD & 9.701e-01 (3.470e-02) & 0.13 (0) & 9.366e-01 (3.001e-02) & 0.22 (0) &  \\ 
			  &  & FF & GD & 7.740e-01 (6.091e-02) & 0.15 (0) & 6.500e-01 (7.318e-02) & 0.24 (0) &  \\ 
			  &  & None & GD & 1.253e+01 (3.864e+00) & 0.15 (0) & 6.066e+00 (8.865e-01) & 0.22 (0) &  \\ 
			  \cline{2-9} & SV-PINN & DAFF & GD & 2.059e-04 (8.463e-05) & 0.14 (0) & 9.886e-05 (2.234e-05) & 0.24 (0) & 350.7 (118.8) \\ 
			  &  &  & L-BFGS & 1.823e-04 (8.511e-05) & 0.69 (0) & 2.340e-05 (1.680e-05) & 2.15 (0.01) & 89.7 (21.2) \\ 
			  \cline{3-9} &  & FF & GD & 4.446e-04 (6.932e-05) & 0.16 (0) & 1.388e-04 (6.024e-05) & 0.26 (0) & 490.3 (144.9) \\ 
			  &  &  & L-BFGS & 3.821e-05 (8.171e-06) & 0.73 (0.01) & 6.926e-06 (5.917e-06) & 2.2 (0.01) & 53.3 (9.1) \\ 
			  \cline{3-9} &  & None & GD & 1.068e-01 (4.118e-03) & 0.15 (0) & 6.573e-02 (1.596e-02) & 0.24 (0) &  \\ 
			  &  &  & L-BFGS & 7.347e-02 (4.900e-02) & 0.66 (0.01) & 3.435e-03 (2.857e-03) & 1.89 (0.02) & 3134.3 (1900.5) \\ 
			  \hline 100 & PINN & DAFF & GD & 9.745e-01 (4.049e-02) & 0.13 (0) & 9.419e-01 (3.475e-02) & 0.22 (0) &  \\ 
			  &  & FF & GD & 7.590e-01 (3.853e-02) & 0.15 (0) & 6.590e-01 (3.641e-02) & 0.24 (0) &  \\ 
			  &  & None & GD & 1.568e+01 (7.755e+00) & 0.14 (0) & 1.238e+01 (5.482e+00) & 0.22 (0) &  \\ 
			  \cline{2-9} & SV-PINN & DAFF & GD & 2.221e-04 (8.165e-05) & 0.14 (0) & 1.201e-04 (2.723e-05) & 0.24 (0) & 362 (110) \\ 
			  &  &  & L-BFGS & 1.825e-04 (9.580e-05) & 0.69 (0) & 1.614e-05 (3.915e-06) & 2.13 (0.04) & 87 (23.6) \\ 
			  \cline{3-9} &  & FF & GD & 1.402e-03 (5.866e-04) & 0.16 (0) & 4.135e-04 (1.454e-04) & 0.26 (0) & 582.7 (179.7) \\ 
			  &  &  & L-BFGS & 4.680e-05 (2.240e-05) & 0.72 (0) & 6.653e-06 (2.189e-06) & 2.16 (0.03) & 72 (24.2) \\ 
			  \cline{3-9} &  & None & GD & 1.172e-01 (6.582e-03) & 0.15 (0) & 1.011e-01 (1.357e-02) & 0.23 (0) &  \\ 
			  &  &  & L-BFGS & 1.109e-01 (4.418e-03) & 0.72 (0.1) & 9.449e-02 (2.186e-02) & 1.97 (0.1) &  \\ 
			  \hline 150 & PINN & DAFF & GD & 9.715e-01 (3.823e-02) & 0.13 (0) & 9.401e-01 (3.334e-02) & 0.22 (0) &  \\ 
			  &  & FF & GD & 8.154e-01 (5.936e-02) & 0.15 (0) & 7.291e-01 (1.084e-02) & 0.24 (0) &  \\ 
			  &  & None & GD & 3.400e+01 (1.963e+01) & 0.15 (0) & 6.110e+01 (4.259e+01) & 0.22 (0) &  \\ 
			  \cline{2-9} & SV-PINN & DAFF & GD & 4.251e-04 (1.773e-04) & 0.14 (0) & 1.439e-04 (3.709e-05) & 0.24 (0) & 348.3 (119.7) \\ 
			  &  &  & L-BFGS & 1.907e-04 (6.547e-05) & 0.69 (0) & 2.440e-05 (1.375e-05) & 2.15 (0) & 88 (21.9) \\ 
			  \cline{3-9} &  & FF & GD & 1.972e-03 (9.858e-04) & 0.16 (0) & 5.891e-04 (8.430e-05) & 0.26 (0) & 571.7 (173.3) \\ 
			  &  &  & L-BFGS & 2.031e-04 (1.665e-04) & 0.72 (0) & 2.404e-05 (1.278e-05) & 2.17 (0.04) & 127 (41) \\ 
			  \cline{3-9} &  & None & GD & 1.469e-01 (2.527e-02) & 0.16 (0.01) & 1.385e-01 (3.027e-02) & 0.24 (0.01) &  \\ 
			  &  &  & L-BFGS & 1.469e-01 (1.449e-02) & 0.67 (0.01) & 1.713e-01 (8.501e-02) & 1.93 (0.01) &  \\ 
			  \hline 200 & PINN & DAFF & GD & 9.845e-01 (5.461e-02) & 0.13 (0) & 9.523e-01 (5.037e-02) & 0.22 (0) &  \\ 
			  &  & FF & GD & 9.771e-01 (2.086e-01) & 0.15 (0) & 7.555e-01 (9.036e-02) & 0.25 (0) &  \\ 
			  &  & None & GD & 2.695e+01 (1.250e+01) & 0.14 (0) & 4.205e+01 (1.377e+01) & 0.22 (0) &  \\ 
			  \cline{2-9} & SV-PINN & DAFF & GD & 6.929e-04 (1.761e-04) & 0.14 (0) & 3.959e-04 (1.762e-04) & 0.24 (0) & 368 (120.2) \\ 
			  &  &  & L-BFGS & 1.805e-04 (1.175e-04) & 0.69 (0) & 2.081e-05 (9.764e-06) & 2.13 (0.03) & 87.7 (28.4) \\ 
			  \cline{3-9} &  & FF & GD & 3.234e-03 (2.236e-03) & 0.16 (0) & 7.740e-04 (1.615e-04) & 0.26 (0) & 562 (171.8) \\ 
			  &  &  & L-BFGS & 6.399e-04 (2.534e-04) & 0.72 (0.01) & 1.152e-04 (1.921e-05) & 2.19 (0.01) & 263.3 (63.5) \\ 
			  \cline{3-9} &  & None & GD & 3.012e-01 (9.973e-02) & 0.15 (0) & 2.779e-01 (7.777e-02) & 0.23 (0) &  \\ 
			  &  &  & L-BFGS & 2.818e-01 (1.023e-01) & 0.67 (0.01) & 2.161e-01 (6.686e-02) & 1.92 (0.01) &  \\ 
			  \hline 250 & PINN & DAFF & GD & 1.001e+00 (5.541e-02) & 0.15 (0.03) & 9.657e-01 (5.548e-02) & 0.23 (0.03) &  \\ 
			  &  & FF & GD & 1.092e+00 (3.177e-02) & 0.16 (0.01) & 8.858e-01 (7.490e-02) & 0.26 (0.01) &  \\ 
			  &  & None & GD & 2.870e+01 (2.084e+01) & 0.15 (0.01) & 4.532e+01 (3.190e+01) & 0.22 (0.01) &  \\ 
			  \cline{2-9} & SV-PINN & DAFF & GD & 1.156e-03 (4.949e-04) & 0.14 (0) & 4.470e-04 (1.759e-04) & 0.24 (0.01) & 333.7 (146.1) \\ 
			  &  &  & L-BFGS & 2.966e-04 (6.614e-05) & 0.69 (0) & 2.910e-05 (1.021e-05) & 2.15 (0) & 108.3 (19.1) \\ 
			  \cline{3-9} &  & FF & GD & 7.287e-03 (2.668e-03) & 0.16 (0) & 1.015e-02 (1.097e-02) & 0.27 (0) & 716.7 (277.1) \\ 
			  &  &  & L-BFGS & 8.498e-01 (1.154e+00) & 0.72 (0.01) & 6.872e-01 (9.537e-01) & 2.19 (0.01) &  \\ 
			  \cline{3-9} &  & None & GD & 4.799e-01 (4.452e-01) & 0.15 (0.01) & 2.093e-01 (4.935e-02) & 0.24 (0) &  \\ 
			  &  &  & L-BFGS & 2.076e-01 (4.251e-02) & 0.67 (0.01) & 2.014e-01 (1.999e-02) & 1.93 (0) &  \\ 
			  \hline 300 & PINN & DAFF & GD & 9.908e-01 (5.692e-02) & 0.13 (0) & 9.606e-01 (5.523e-02) & 0.23 (0.01) &  \\ 
			  &  & FF & GD & 1.566e+00 (4.275e-01) & 0.15 (0) & 1.336e+00 (3.038e-01) & 0.24 (0) &  \\ 
			  &  & None & GD & 1.615e+01 (1.834e+01) & 0.15 (0) & 4.530e+01 (3.825e+01) & 0.22 (0.01) &  \\ 
			  \cline{2-9} & SV-PINN & DAFF & GD & 9.426e-04 (4.674e-04) & 0.14 (0) & 3.050e-04 (1.666e-04) & 0.24 (0) & 394 (68.5) \\ 
			  &  &  & L-BFGS & 8.127e-04 (4.405e-04) & 0.69 (0.01) & 5.318e-05 (3.462e-05) & 2.16 (0.01) & 160 (65.2) \\ 
			  \cline{3-9} &  & FF & GD & 1.850e-01 (6.646e-02) & 0.16 (0) & 1.624e-01 (6.249e-02) & 0.26 (0) &  \\ 
			  &  &  & L-BFGS & 1.858e+00 (4.762e-01) & 0.73 (0.02) & 2.291e+00 (2.155e+00) & 2.18 (0.03) &  \\ 
			  \cline{3-9} &  & None & GD & 5.883e-01 (3.995e-01) & 0.15 (0.01) & 2.078e-01 (6.886e-02) & 0.24 (0.01) &  \\ 
			  &  &  & L-BFGS & 1.953e-01 (6.249e-02) & 0.66 (0) & 2.300e-01 (8.392e-02) & 1.93 (0) &  \\
			  \hline
	\end{longtable}}
	
	\section{Auxiliary results}
	\label{appAux}
	
	In this section, we present some results that are necessary for the proofs of this paper. From now on, we assume that $\Omega = (0,1)^{d}, d \leq 3$. First, we recall the following version of the Bramble-Hilbert Lemma \cite{bramble1970estimation}. Denote the squared $H^2(\Omega)$ semi-norm by $\lvert u \rvert_{H^2}^2 = \sum_{|\alpha| = 2} \lVert D^{\alpha}u \rVert_{L^2}^2$ and by $(H^{2}(\Omega))^{\star}$ the dual space of $H^{2}(\Omega)$.
	
	\begin{lemma}[Bramble-Hilbert Lemma]
		\label{lemma_BH}
		Let $\Omega = (0,1)^{d}, d \leq 3,$ and $F \in (H^{2}(\Omega))^{\star}$ be such that $F(u_{\ell}) = 0$ for all affine functions $u_{\ell}(x) = a_{0} + \sum_{j = 1}^{d} a_{j} x_{j}$. Then, there exists a constant $C = C(\Omega)$ such that
		\begin{align*}
			|F(u)| \leq C \, \lVert F \rVert \, \lvert u \rvert_{H^2}
		\end{align*}  
		for all $u \in H^2(\Omega)$ in which $\lVert F \rVert$ is the $(H^{2}(\Omega))^{\star}$ norm of $F$.
	\end{lemma}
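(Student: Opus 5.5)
This is the classical Bramble--Hilbert argument. I plan to prove it by combining a Poincar\'e-type inequality modulo affine functions with the boundedness of $F$.

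\textbf{Step 1 (reduction).} Let $\mathcal{P}_1$ denote the space of affine functions on $\Omega$. Since $F$ vanishes on $\mathcal{P}_1$, for every $u \in H^2(\Omega)$ and every $p \in \mathcal{P}_1$ we have $F(u) = F(u - p)$. Hence
\begin{align*}
|F(u)| \leq \lVert F \rVert \inf_{p \in \mathcal{P}_1} \lVert u - p \rVert_{H^2}.
\end{align*}
It therefore suffices to prove the Deny--Lions type estimate
\begin{align*}
\inf_{p \in \mathcal{P}_1} \lVert u - p \rVert_{H^2} \leq C(\Omega) \, \lvert u \rvert_{H^2} \quad \text{for all } u \in H^2(\Omega).
\end{align*}

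\textbf{Step 2 (choice of $p$).} I would pick $p = p_u$ as the unique affine function satisfying $\int_\Omega (u - p)\,dx = 0$ and $\int_\Omega \partial_j(u - p)\,dx = 0$ for $j = 1, \dots, d$. These are $d+1$ linear conditions on the $d+1$ coefficients, and the system is triangular: the gradient of $p$ is fixed by the averages of $\nabla u$, and then $a_0$ is fixed by the average of $u$. So $p_u$ is well defined. Write $v = u - p_u$. Then $\lvert v \rvert_{H^2} = \lvert u \rvert_{H^2}$, because second derivatives annihilate affine functions.

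\textbf{Step 3 (Poincar\'e twice).} The cube is a bounded Lipschitz, indeed convex, domain, so the Poincar\'e--Wirtinger inequality $\lVert w - \bar w \rVert_{L^2} \leq C \lVert \nabla w \rVert_{L^2}$ holds for $w \in H^1(\Omega)$.

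First apply it to each $w = \partial_j v$, which has zero mean. This gives $\lVert \partial_j v \rVert_{L^2} \leq C \lVert \nabla \partial_j v \rVert_{L^2} \leq C \lvert v \rvert_{H^2}$.

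Then apply it to $w = v$, which also has zero mean. This gives $\lVert v \rVert_{L^2} \leq C \lVert \nabla v \rVert_{L^2} \leq C \lvert v \rvert_{H^2}$.

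Summing the three levels, $\lVert v \rVert_{H^2} \leq C \lvert u \rvert_{H^2}$. Combined with Step 1, this yields the claim with $C = C(\Omega)$.

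\textbf{Main obstacle.} The only genuinely analytic input is the Poincar\'e--Wirtinger inequality on $\Omega$, together with the fact that the mean-zero normalisation is compatible at both derivative levels. That compatibility is exactly why $p_u$ is chosen through derivative averages first and the function average second.

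An alternative route, if one prefers to avoid explicit constants, is a compactness-contradiction argument. It uses the Rellich embedding $H^2 \hookrightarrow H^1$ and the fact that $\lvert u \rvert_{H^2} = 0$ forces $u \in \mathcal{P}_1$ on a connected domain. On the cube, however, the explicit Poincar\'e route is cleaner, and I would present that one.

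Note that the restriction $d \leq 3$ is not needed for this lemma itself. It matters only later, when $F$ is a quadrature-error functional that requires point evaluations and hence the embedding $H^2 \hookrightarrow C^0$.
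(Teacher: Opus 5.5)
Your proof is correct. The paper does not prove this lemma at all; it quotes it from Bramble--Hilbert (1970). So there is no in-paper argument to match, and your write-up supplies a self-contained proof of the cited result.

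The classical proofs make the same reduction as your Step 1, to the Deny--Lions estimate $\inf_{p \in \mathcal{P}_1}\lVert u - p\rVert_{H^2} \leq C \lvert u \rvert_{H^2}$. They then establish it in one of two ways:
\begin{itemize}
\item non-constructively, via the equivalence of $\lVert u \rVert_{H^2}$ with $\lvert u \rvert_{H^2} + \sum_{|\alpha| \leq 1} \bigl|\int_\Omega D^{\alpha} u\,dx\bigr|$, proved by contradiction using Rellich compactness;
\item or via averaged Taylor polynomials (Dupont--Scott), which covers general star-shaped domains, all orders and $L^p$ norms.
\end{itemize}
Your $p_u$ imposes exactly the normalisation $\int_\Omega D^\alpha(u-p_u)\,dx = 0$ for $|\alpha|\le 1$ that underlies the equivalent-norm proof. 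The difference is that you replace compactness by two applications of Poincar\'e--Wirtinger, and the argument extends to higher polynomial degree by iterating the same triangular normalisation.

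What your route buys on the unit cube is an explicit constant. The Poincar\'e--Wirtinger constant there is exactly $1/\pi$, since the first nonzero Neumann eigenvalue is $\pi^2$. Tracking your steps with the paper's multi-index norm gives
\begin{align*}
\lVert u - p_u \rVert_{H^2}^2 \leq \left(1 + 2\pi^{-2} + 2\pi^{-4}\right) \lvert u \rvert_{H^2}^2,
\end{align*}
independent of $d$.

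Your closing remark is also right. The restriction $d \leq 3$ plays no role in the lemma. It enters only because the paper applies the lemma to $F = I - S_h$, which contains point evaluations and belongs to $(H^2(\Omega))^\star$ only through the embedding $H^2(\Omega) \hookrightarrow C^0(\bar{\Omega})$.
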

	
	We will apply Lemma \ref{lemma_BH} to bound the error of the Trapezoid rule for approximating integrals of $u \in H^2(\Omega) \cap H_{0}^{1}(\Omega)$. Formally, denote
	\begin{align*}
		I(u) = \int_{\Omega} u(x) \ dx & & S_{h} = h^{d} \sum_{|\boldsymbol{k}| \leq n} u(x^{(\bs{k})})
	\end{align*}
	in which $h = 1/(n + 1)$ and $x^{(\bs{k})} = (k_{1}/(n+1),\dots,k_{d}/(n+1))$. Since $u \in H^2(\Omega)$ and $d \le 3$, the (Sobolev) embedding $H^2(\Omega) \hookrightarrow C^0(\overline{\Omega})$ holds, and the condition $u \in H_0^1(\Omega)$ implies $u(x) = 0$ for $x \in \partial\Omega$. Therefore, it holds
	\begin{align*}
		S_{h}(u) = h^{d} \sum_{|\boldsymbol{k}| \leq (n + 1)} q_{\boldsymbol{k}} \, u(x^{(\bs{k})})
	\end{align*}
	with $q_{\bs{k}} = 1/2^{\sum_j \mathds{1}\{k_{j} \in \{0,n + 1\}\}}$, so it equals the approximation of $I(u)$ obtained by applying the composite Trapezoid rule in each coordinate. In particular, $I(\cdot) - S_h(\cdot) \in (H^{2}(\Omega))^{\star}$ and $I(u_{\ell}) - S_h(u_{\ell}) = 0$ if $u_{\ell}$ is an affine function, so Lemma \ref{lemma_BH} can be applied to yield an approximation error between $S_h(u)$ and $I(u)$.
	
	\begin{proposition}
		\label{prop_trapezoid}
		If $u \in H^2(\Omega) \cap H_{0}^{1}(\Omega)$ and $d \leq 3$, then there exists $C = C(\Omega) > 0$ such that
		\begin{align*}
			\left|\int_{\Omega} u(x) \ dx - S_{h}(u)\right| \leq C \, h^2 \, \lVert u \rVert_{H^2}.
		\end{align*}
	\end{proposition}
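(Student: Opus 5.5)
The plan is to apply Lemma \ref{lemma_BH} to the functional $F(u) \coloneqq I(u) - S_{h}(u)$, but on each small cell of the grid separately rather than on the whole cube. A global application would only give an $O(1)$ constant with no power of $h$. Concretely, write $\Omega$ as the union of the $(n+1)^{d}$ cubes $Q_{\bs{j}} = h\bs{j} + (0,h)^{d}$, $\bs{j} \in \{0,\dots,n\}^{d}$. As observed just before the statement, $u$ is continuous on $\bar{\Omega}$ and vanishes on $\partial\Omega$. Hence $S_{h}(u)$ equals the composite trapezoid rule, which splits as a sum over cells of the local tensor trapezoid rules
\begin{align*}
T_{Q_{\bs{j}}}(u) = h^{d} \, 2^{-d} \sum_{v \text{ vertex of } Q_{\bs{j}}} u(v).
\end{align*}
Therefore $I(u) - S_{h}(u) = \sum_{\bs{j}} \big(\int_{Q_{\bs{j}}} u \, dx - T_{Q_{\bs{j}}}(u)\big)$.

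The core step is a scaling argument. Map each cell to the reference cube $\hat{Q} = (0,1)^{d}$ by $x = h\bs{j} + h\hat{x}$ and set $\hat{u}(\hat{x}) = u(x)$. On $\hat{Q}$ define $\hat{F}(\hat{u}) = \int_{\hat{Q}} \hat{u} - 2^{-d}\sum_{\hat{v}} \hat{u}(\hat{v})$. This functional is bounded on $H^{2}(\hat{Q})$ because $H^{2}(\hat{Q}) \hookrightarrow C^{0}(\bar{\hat{Q}})$ for $d \leq 3$. It also annihilates affine functions, since the tensor trapezoid rule is exact for them. Lemma \ref{lemma_BH} then gives $|\hat{F}(\hat{u})| \leq C |\hat{u}|_{H^{2}(\hat{Q})}$, with $C$ depending only on $d$. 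Undoing the change of variables yields $\int_{Q_{\bs{j}}} u - T_{Q_{\bs{j}}}(u) = h^{d}\hat{F}(\hat{u})$. Second derivatives scale as $D^{2}\hat{u} = h^{2} D^{2}u$ and $d\hat{x} = h^{-d}dx$, so $|\hat{u}|_{H^{2}(\hat{Q})} = h^{2-d/2}|u|_{H^{2}(Q_{\bs{j}})}$. Each cell error is therefore at most $C h^{2+d/2}|u|_{H^{2}(Q_{\bs{j}})}$.

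Summing over the $(n+1)^{d} = h^{-d}$ cells and applying Cauchy--Schwarz gives
\begin{align*}
|I(u) - S_{h}(u)| \leq C h^{2+d/2} \Big(h^{-d}\Big)^{1/2} \Big(\sum_{\bs{j}} |u|_{H^{2}(Q_{\bs{j}})}^{2}\Big)^{1/2} = C h^{2} |u|_{H^{2}(\Omega)} \leq C h^{2}\lVert u \rVert_{H^{2}}.
\end{align*}
This is the claim.

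The main point to handle carefully is justifying the cellwise decomposition of $S_{h}$, that is, matching the weights $q_{\bs{k}}$ with the multiplicities of shared vertices. Interior grid nodes are shared by $2^{d}$ cells, each contributing $2^{-d}h^{d}$, so they receive total weight $h^{d}$. Boundary nodes receive reduced weights, but they carry $u = 0$, so their weights are irrelevant. A second point is that Lemma \ref{lemma_BH} is stated on $(0,1)^{d}$; this is fine, because we only invoke it on the reference cube $\hat{Q} = (0,1)^{d}$ itself, with a fixed functional $\hat{F}$ independent of $h$.
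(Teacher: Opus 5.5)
Your proof is correct and follows the same route as the paper. You split $\Omega$ into the $h^{-d}$ cells of side $h$, apply Lemma~\ref{lemma_BH} on the reference cube $(0,1)^d$ after rescaling to get a cell error of order $h^{2+d/2}\lvert u\rvert_{H^2(Q)}$, and sum with Cauchy--Schwarz. Your version is tidier than the paper's: you apply Bramble--Hilbert to a fixed, $h$-independent local vertex rule $\hat F$ and check that these local rules reassemble into $S_h$ with the weights $q_{\bs{k}}$, whereas the paper rescales the global functional $I - S_h$ and absorbs $\sup_{h}$ of its dual norm into the constant.
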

	\begin{proof}
		To each $|\bs{k}| \leq n$ associate the cube $Q_\bs{k} = \{x_{\bs{k}} + y: y \in (0,h)^{d}\}$ and observe that $\Omega = \bigcup_{\bs{k} \leq n} Q_{\bs{k}}$, in which the union is of disjoint sets, so
		\begin{align}
			\label{int_as_sum}
			\left|I(u) - S_{h}(u)\right| \leq \sum_{|\boldsymbol{k}| \leq n} \left|I(u|_{Q_{\bs{k}}}) - S_{h}(u|_{Q_{\bs{k}}})\right|.
		\end{align}
		Now, by a change of variables,  denoting $u_{\bs{k}}(y) = u(x_{\bs{k}} + hy)$,
		\begin{align*}
			I(u|_{Q_{\bs{k}}}) = \int_{Q_{\bs{k}}} u(x) \ dx = \int_{(0,h)^d} u(x_{\bs{k}} + y) \ dy = h^d \, \int_{\Omega} u(x_{\bs{k}} + hy) \ dy = h^d I(u_{\bs{k}}) 
		\end{align*}
		and, likewise, $S_{h}(u|_{Q_{\bs{k}}}) = h^d \, S_{h}(u_{\bs{k}})$, so 
		\begin{align}
			\label{B2}
			\left|I(u|_{Q_{\bs{k}}}) - S_{h}(u|_{Q_{\bs{k}}})\right| \leq h^{d} \left|I(u_{\bs{k}}) - S_{h}(u_{\bs{k}})\right| \leq C \, h^d \, \lvert u_{\bs{k}} \rvert_{H^2}
		\end{align}
		in which the inequality is due to Lemma \ref{lemma_BH}. We note that the sup over $h > 0$ of the $(H^{2}(\Omega))^{\star}$ norm of the functional $I(\cdot) - S_{h}(\cdot)$, which is finite for $d \leq 3$, is part of the constant $C$.
		
		By the chain rule, for $|\alpha| = 2$ and $y \in \Omega$,
		\begin{align*}
			(D^{\alpha}u_{\bs{k}})(y) = h^2 \, (D^{\alpha}u)(x_{\boldsymbol{k}} + hy)
		\end{align*}
		and therefore
		\begin{align}
			\label{B3} \nonumber
			\lVert D^{\alpha}u_{\bs{k}} \rVert_{L^2}^{2} &= \int_{\Omega} \left|h^2 (D^{\alpha}u)(x_{\boldsymbol{k}} + hy)\right|^{2} \, dy\\
			&= h^{4 - d} \int_{Q_{\bs{k}}} \left|(D^{\alpha}u)(x)\right|^{2} \, dx = h^{4 - d} \lVert D^{\alpha}u \rVert_{L^2(Q_{\bs{k}})}^{2}.
		\end{align}
		Combining \eqref{int_as_sum}, \eqref{B2} and \eqref{B3} we conclude that
		\begin{align*}
			\left|I(u) - S_{h}(u)\right| &\leq C \, h^{2 + d/2} \sum_{|\boldsymbol{k}| \leq n} \lvert u \rvert_{H^2(Q_{\bs{k}})}\\
			&\leq C \, h^{2 + d/2} \, h^{-d/2} \left(\sum_{|\boldsymbol{k}| \leq n} \lvert u \rvert_{H^2(Q_{\bs{k}})}^2\right)^{1/2} = C \, h^2 \, \lvert u \rvert_{H^2}
		\end{align*}
		in which the second inequality is due to Cauchy-Schwarz. It remains to recall that $\lvert u \rvert_{H^2} \leq \lVert u \rVert_{H^{2}}$.
	\end{proof}
	
	As a corollary, we can bound $|\langle R,\phi_{\bs{k}} \rangle_{h} - \langle R,\phi_{\bs{k}} \rangle|$ for $R \in H^{2}(\Omega)$ in which $\langle \cdot,\cdot\rangle_{h}$ is defined in \eqref{discrete_IN}. Recall that $h = 1/(n + 1)$.
	
	\begin{corollary}
		\label{corollary_norms}
		If $R \in H^{2}(\Omega)$ and $|\bs{k}| \leq n$, then 
		\begin{align*}
			|\langle R,\phi_{\bs{k}} \rangle_{h} - \langle R,\phi_{\bs{k}} \rangle| \leq C \, h^{2} \, \lambda_{\bs{k}} \, \lVert R \rVert_{H^2}.
		\end{align*}
	\end{corollary}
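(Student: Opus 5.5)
The plan is to apply Proposition \ref{prop_trapezoid} to the product $u = R\,\phi_{\bs{k}}$. First, $\langle R,\phi_{\bs{k}} \rangle = \int_{\Omega} R\,\phi_{\bs{k}}\,dx = I(R\phi_{\bs{k}})$. Second, by \eqref{discrete_IN}, $\langle R,\phi_{\bs{k}} \rangle_{h} = h^{d}\sum_{|\bs{j}|\le n} R(x^{(\bs{j})})\phi_{\bs{k}}(x^{(\bs{j})}) = S_{h}(R\phi_{\bs{k}})$. Hence the quantity to bound is exactly the trapezoid error $|I(R\phi_{\bs{k}}) - S_h(R\phi_{\bs{k}})|$.

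To use the proposition we need $R\phi_{\bs{k}} \in H^2(\Omega)\cap H_0^1(\Omega)$.
\begin{itemize}
\item Membership in $H_0^1(\Omega)$: $\phi_{\bs{k}}$ is a product of sines \eqref{eigenf}, so it is smooth on $\bar\Omega$ and vanishes on $\partial\Omega$. Since $R$ is continuous for $d\le 3$ by the Sobolev embedding, the product vanishes on $\partial\Omega$.
\item Membership in $H^2(\Omega)$: $H^2(\Omega)$ is closed under multiplication by $C^2(\bar\Omega)$ functions.
\end{itemize}
The proposition then gives
\[
|\langle R,\phi_{\bs{k}}\rangle_h - \langle R,\phi_{\bs{k}}\rangle| \le C h^2 \lVert R\phi_{\bs{k}}\rVert_{H^2}.
\]
In fact the proof of the proposition only uses the seminorm, so $|R\phi_{\bs{k}}|_{H^2}$ can replace $\lVert R\phi_{\bs{k}}\rVert_{H^2}$ on the right.

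The main step is the product estimate $\lVert R\phi_{\bs{k}}\rVert_{H^2} \le C\lambda_{\bs{k}}\lVert R\rVert_{H^2}$, with $C$ independent of $\bs{k}$. By the Leibniz rule, for $|\alpha|=2$ we have $D^\alpha(R\phi_{\bs{k}}) = \sum_{\beta\le\alpha}\binom{\alpha}{\beta} D^{\beta}R\, D^{\alpha-\beta}\phi_{\bs{k}}$, and we bound each piece in $L^2$ by $\lVert D^\beta R\rVert_{L^2}\lVert D^{\alpha-\beta}\phi_{\bs{k}}\rVert_{L^\infty}$. From \eqref{eigenf}, $\lVert D^{\gamma}\phi_{\bs{k}}\rVert_{L^\infty} \le 2^{d/2}\pi^{|\gamma|}\prod_j k_j^{\gamma_j}$, which gives the following scalings:
\begin{itemize}
\item $|\gamma|=0$: the bound is $O(1)\le C\lambda_{\bs{k}}$.
\item $|\gamma|=1$: the bound is $O(|\bs{k}|)\le C\lambda_{\bs{k}}^{1/2}\le C\lambda_{\bs{k}}$, using $\lambda_{\bs{k}}\ge d\pi^2\ge 1$.
\item $|\gamma|=2$: the bound is $\pi^2 k_ik_j \le \pi^2(k_i^2+k_j^2)/2 \le \lambda_{\bs{k}}$.
\end{itemize}
Summing over $\beta$ and $\alpha$, and treating the lower-order terms of the full $H^2$ norm in the same way, yields the claimed estimate. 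The potential obstacle is keeping the constant uniform in $\bs{k}$. This works because every derivative of $\phi_{\bs{k}}$ up to order two is controlled by $\lambda_{\bs{k}}$, and $\lambda_{\bs{k}}$ is bounded below away from zero.

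The restriction $|\bs{k}|\le n$ is not needed for this bound; it matters only for how the corollary is used in Lemma \ref{L1}. Combining the trapezoid error with the product estimate gives $|\langle R,\phi_{\bs{k}} \rangle_{h} - \langle R,\phi_{\bs{k}} \rangle| \le C h^2\lambda_{\bs{k}}\lVert R\rVert_{H^2}$.
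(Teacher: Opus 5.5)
Your proposal is correct. Your first step is the same as the paper's: you identify $\langle R,\phi_{\bs{k}}\rangle_h$ with the trapezoid sum $S_h(R\phi_{\bs{k}})$, check that $R\phi_{\bs{k}}\in H^2(\Omega)\cap H^1_0(\Omega)$, and apply Proposition~\ref{prop_trapezoid}. You obtain the key product estimate $\lVert R\phi_{\bs{k}}\rVert_{H^2}\le C\lambda_{\bs{k}}\lVert R\rVert_{H^2}$ by a different route.

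The paper uses two structural facts:
\begin{itemize}
\item $H^2(\Omega)$ is a Banach algebra for $d\le 3$, which gives $\lVert R\phi_{\bs{k}}\rVert_{H^2}\le C\lVert R\rVert_{H^2}\lVert \phi_{\bs{k}}\rVert_{H^2}$;
\item on $H^2(\Omega)\cap H^1_0(\Omega)$ the full $H^2$ norm is controlled by the spectral norm, so $\lVert\phi_{\bs{k}}\rVert_{H^2}\le C\lVert\phi_{\bs{k}}\rVert_{\dot{H}^2}=C\lambda_{\bs{k}}$. This is an elliptic-regularity statement.
\end{itemize}

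You instead expand $D^\alpha(R\phi_{\bs{k}})$ by the Leibniz rule. You then use the explicit $W^{2,\infty}$ bounds on the tensor-product sine eigenfunctions, which are uniform in $\bs{k}$ up to the factor $\lambda_{\bs{k}}$, together with $\lambda_{\bs{k}}\ge d\pi^2$ to absorb the lower-order terms.

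Your route is more elementary and self-contained. It needs neither the algebra property nor a regularity theorem, so $d\le 3$ enters only through point evaluation of $R$ and through Proposition~\ref{prop_trapezoid} itself. The price is that it relies on the uniform boundedness of $\phi_{\bs{k}}$ and its rescaled derivatives, which is special to the sine basis. On general domains, eigenfunction sup-norms can grow with $\lambda_k$. The paper's argument needs only $H^2$-level information on $\phi_{\bs{k}}$, so it transfers more readily to other convex domains.

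Your side remark that the restriction $|\bs{k}|\le n$ plays no role in the bound itself is also correct, and it holds for either proof.
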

	\begin{proof}
		Since $u \coloneqq R\phi_{\bs{k}} \in H^2(\Omega) \cap H_{0}^{1}(\Omega)$, the discrete norm $\langle R,\phi_{\bs{k}} \rangle_{h}$ is the trapezoid rule of the integral $\langle R,\phi_{\bs{k}} \rangle$ so we can apply Proposition \ref{prop_trapezoid} to obtain
		\begin{align*}
			|\langle R,\phi_{\bs{k}} \rangle_{h} - \langle R,\phi_{\bs{k}} \rangle| \leq C \, h^2 \, \lVert R\phi_{\bs{k}} \rVert_{H^2} \leq  C \, h^2 \, \lVert R \rVert_{H^2} \, \lVert \phi_{\bs{k}} \rVert_{H^2}
		\end{align*}
		in which the second inequality follows since $H^{2}(\Omega)$ is closed under multiplication for $d \leq 3$. Since $\phi_{\bs{k}} \in H^{2}(\Omega) \cap H_{0}^{1}(\Omega)$ we have that
		\begin{align*}
			\lVert \phi_{\bs{k}} \rVert_{H^2}^{2} \leq C \, \lVert \phi_{\bs{k}} \rVert_{\dot{H}^2}^{2} = C \sum_{\bs{k}^\prime} \lambda_{\bs{k}^\prime}^{2} \, |\langle \phi_{\bs{k}},\phi_{\bs{k}^\prime} \rangle|^{2} = C \, \lambda_{\bs{k}}^{2}
		\end{align*}
		and the result follows.
	\end{proof}
	
	We now prove the rate of convergence of the eigenvalues of the discretised Dirichlet Laplacian for hypercube domains.
	
	\begin{lemma}
		\label{lemma_eigenvalue}
		Let $\Omega = (0,1)^{d}$, $\{\lambda_{\bs{k}}\}$ denote the eigenvalues of Dirichlet Laplacian and $\{\lambda_{\bs{k}}^{(h)}\}$ be the eigenvalues of the respective discretised operator with mesh size $h$. Then
		\begin{align*}
			|\lambda_{\bs{k}}^{(h)} - \lambda_{\bs{k}}| \leq C \, h^{2} \, \lambda_{\bs{k}}^{2}
		\end{align*}
		for all $\bs{k}$.
	\end{lemma}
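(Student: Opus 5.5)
The estimate separates across coordinates, since both eigenvalues are sums of one-dimensional terms. Writing $\lambda_{\bs{k}} = \pi^2\sum_{j} k_j^2$ and $\lambda_{\bs{k}}^{(h)} = \frac{4}{h^2}\sum_j \sin^2(\pi k_j h/2)$, we have
\begin{align*}
\lambda_{\bs{k}} - \lambda_{\bs{k}}^{(h)} = \sum_{j=1}^{d} \left[\pi^2 k_j^2 - \frac{4}{h^2}\sin^2\!\left(\frac{\pi k_j h}{2}\right)\right].
\end{align*}
It therefore suffices to bound each one-dimensional discrepancy $e(t) := t^2 - 4\sin^2(t/2)$, evaluated at $t = \pi k_j h$ and divided by $h^2$.

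The key scalar inequality is $0 \le t^2 - 4\sin^2(t/2) \le t^4/12$ for all real $t$. To prove it, first note that $|\sin(s)| \le |s|$ gives $e(t) \geq 0$. For the upper bound, write
\begin{align*}
t^2 - 4\sin^2(t/2) = \bigl(t - 2\sin(t/2)\bigr)\bigl(t + 2\sin(t/2)\bigr)
\end{align*}
for $t \ge 0$. Then use $0 \le s - \sin s \le s^3/6$ with $s = t/2$, which gives $t - 2\sin(t/2) \le t^3/24$. Together with $t + 2\sin(t/2) \le 2t$, this yields $e(t) \le t^4/12$. Negative $t$ follows by evenness of $e$.

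Applying the scalar bound with $t = \pi k_j h$ gives
\begin{align*}
0 \le \pi^2 k_j^2 - \frac{4}{h^2}\sin^2\!\left(\frac{\pi k_j h}{2}\right) \le \frac{\pi^4 k_j^4 h^2}{12}.
\end{align*}
Summing over $j$, we obtain $|\lambda_{\bs{k}}^{(h)} - \lambda_{\bs{k}}| \le \frac{h^2}{12}\sum_j (\pi^2 k_j^2)^2 \le \frac{h^2}{12}\lambda_{\bs{k}}^2$. The last step uses $\sum_j a_j^2 \le (\sum_j a_j)^2$ for $a_j \ge 0$. This proves the lemma with $C = 1/12$, and the bound holds for every $\bs{k}$, not only $|\bs{k}| \le n$.

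The only real obstacle is the uniformity claim "for all $\bs{k}$", since for large $k_j h$ a Taylor expansion alone is not uniform. The factorization argument avoids this because $s - \sin s \le s^3/6$ holds for all $s \ge 0$. (This is the standard consequence of integrating $1 - \cos s \le s^2/2$.) For $|\bs{k}| \le n$ one can also note the two-sided comparability $c\lambda_{\bs{k}} \le \lambda^{(h)}_{\bs{k}} \le C\lambda_{\bs{k}}$ used in Lemma \ref{L1}. It follows from $\sin(x) \ge 2x/\pi$ on $[0,\pi/2]$, but it is not needed here.
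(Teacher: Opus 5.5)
Your proof is correct and follows the paper's overall structure. Both split $\lambda_{\bs{k}}-\lambda_{\bs{k}}^{(h)}$ into one-dimensional discrepancies, bound each by a multiple of $h^2\lambda_{k_j}^2$, and conclude with $\sum_j a_j^2\le(\sum_j a_j)^2$.

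The difference is in the one-dimensional step. The paper expands $\sin^2(x)=x^2-x^4/3+\mathcal{O}(x^6)$ at $x=k\pi h/2$ to obtain $\lambda_k^{(h)}=\lambda_k-\frac{h^2}{12}\lambda_k^2+\mathcal{O}(h^4\lambda_k^3)$. Its subsequent ``hence $\mathcal{O}(h^2\lambda_k^2)$'' tacitly relies on the restriction $k\le n$, i.e.\ $kh<1$, to absorb $h^4\lambda_k^3=(h^2\lambda_k)\,h^2\lambda_k^2\le\pi^2h^2\lambda_k^2$.

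You instead factor $t^2-4\sin^2(t/2)=(t-2\sin(t/2))(t+2\sin(t/2))$ and use $0\le s-\sin s\le s^3/6$. This replaces the asymptotic reasoning with the global inequality $0\le t^2-4\sin^2(t/2)\le t^4/12$, which buys three things:
\begin{itemize}
\item an explicit constant $C=1/12$, which is asymptotically sharp since it matches the leading coefficient of the paper's expansion;
\item the sign $\lambda_{\bs{k}}^{(h)}\le\lambda_{\bs{k}}$ for every $\bs{k}$;
\item no smallness condition on $k_jh$.
\end{itemize}

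The paper's expansion, in turn, identifies the leading error term $-\frac{h^2}{12}\lambda_k^2$. That shows the $h^2$ rate cannot be improved, which an upper bound alone does not.

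One minor point: $-\Delta_h$ has only the $n^d$ eigenvalues with $|\bs{k}|\le n$. Your extension ``for every $\bs{k}$'' is therefore a statement about the trigonometric formula, not about the discrete operator. This is harmless, since Lemma \ref{L1} only uses $|\bs{k}|\le n$.
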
	
	\begin{proof}
		Recall that
		\begin{align*}
			\lambda_{\bs{k}} = \pi^{2} \sum_{j=1}^{d} k_j^{2} & & \lambda_{\bs{k}}^{(h)} = \frac{4}{h^{2}} \sum_{j=1}^{d} \sin^{2}\!\left( \frac{k_j \pi h}{2} \right).
		\end{align*}
		with $k_{j} = 1,\dots,n$ and $h = 1/(n + 1)$. Fix $k \in \{1,\dots,n\}$ and let
		\begin{align*}
			\lambda_k = \pi^{2} k^{2} & & \lambda_k^{(h)} = \frac{4}{h^{2}} \sin^{2}\!\left( \frac{k \pi h}{2} \right).
		\end{align*}
		By the Taylor expansion of the sine function
		\begin{align*}
			\sin(x) = x - \frac{x^{3}}{6} + \mathcal{O}(x^{5}) & & \text{ and } & & \sin^{2}(x) = x^{2} - \frac{x^{4}}{3} + \mathcal{O}(x^{6}).
		\end{align*}
		Taking $x = \frac{k \pi h}{2}$ above we conclude that
		\begin{align*}
			\lambda_k^{(h)}	= \pi^{2} k^2 -  \frac{\pi^{4} k^{4} h^{2}}{12}	+ \mathcal{O}\!\left(k^{6} \pi^6 h^4\right) = \lambda_k - \frac{h^{2}}{12} \lambda_k^{2}	+ \mathcal{O}(h^{4}\lambda_k^{3})
		\end{align*}
		hence $\lambda_k^{(h)} - \lambda_k = \mathcal{O}(h^{2}\lambda_k^{2})$. Since
		\begin{align*}
			\sum_{j=1}^{d} \lambda_{k_j}^{2} \leq \left(\sum_{j=1}^{d} \lambda_{k_j} \right)^{2} = \lambda_{\bs{k}}^{2}
		\end{align*}
		we conclude
		\begin{align*}
			|\lambda_{\bs{k}}^{(h)} - \lambda_{\bs{k}}| = \left|\sum_{j = 1}^{d} \left(\lambda_{k_{j}}^{(h)} - \lambda_{k_{j}}\right)\right| = \mathcal{O}(h^{2}) \sum_{j=1}^{d} \lambda_{k_{j}}^{2} \leq C \, h^{2} \, \lambda_{\bs{k}}^{2}.
		\end{align*}
	\end{proof}
	 
\end{document}